\theoremstyle{plain}
\newtheorem{theorem}{Theorem}
\newtheorem{lemma}[theorem]{Lemma}                              
\newtheorem{proposition}[theorem]{Proposition}
\newtheorem{corollary}[theorem]{Corollary}
\theoremstyle{definition}
\newtheorem{definition}[theorem]{Definition}
\newtheorem{example}[theorem]{Example}
\newtheorem{remark}[theorem]{Remark}
\newtheorem{assumption}[theorem]{Assumption}
\def \s {{\sigma}}
\def \g {{\gamma}}
\def \a {{\alpha}}
\def \b {{\beta}}
\def \d {{\delta}}
\newcommand{\<}{\langle}
\renewcommand{\>}{\rangle}
\newcommand\N{\mathbb{N}}
\newcommand\Nb{\mathbb{N}}
\newcommand\R{\mathbb{R}}
\newcommand\p{\partial}
\newcommand\Gc{\mathscr{G}}
\newcommand\Lc{\mathscr{L}}
\newcommand\Om{\Omega}
\newcommand\xb{\bar{x}}
\def \p {{\partial}}
\def \R  {{\mathbb {R}}}
\def \x {{\xi}}
\def \g {{\gamma}}
\def \z {{\zeta}}
\def \p {{\partial}}
\def \a {{\alpha}}
\def \O {{\Omega}}
\def \d {{\delta}}
\def \a {{\alpha}}
\def \b {{\beta}}
\def \d {{\delta}}
\def \s {{\sigma}}
\def \R {{\mathbb {R}}}
\def \N {{\mathbb {N}}}
\def \x {{\xi}}
\def \z {{\zeta}}
\def \g {{\gamma}}
\def \O {{\Omega}}
\def \phi {{\varphi}}
\def \tilde {\widetilde}
\def\l {\lambda}
\def \à {{\`a }}
\def \è {{\`e }}
\def \ò {{\`o }}
\def \ù {{\`u }}
\newcommand{\norm}[1]{\left\|{#1}\right\|}
\newcommand\dd{\mathrm{d}}
\newcommand\Rd{\mathbb{R}^{d}}
\newcommand\Rdd{\mathbb{R}\times\mathbb{R}^{d}}
\newcommand\Ndzero{\mathbb{N}^d_{0}}
\newcommand\Nzero{\mathbb{N}_{0}}
\newcommand\formaldeg{m}
\def \phi {{\varphi}}
\begin{document}

\title{Intrinsic Taylor formula for Kolmogorov-type homogeneous groups
}
\author{
Stefano Pagliarani
\thanks{Centre de Math\'ematiques Appliqu\'ees, Ecole Polytechnique, Paris, France. The author's research was supported by the Chair {\it Financial Risks} of the {\it Risk Foundation} and the {\it  Finance for Energy Market Research Centre}.}
\and
Andrea Pascucci
\thanks{Dipartimento di Matematica,
Universit\`a di Bologna, Bologna, Italy}
\and
Michele Pignotti
\thanks{Dipartimento di Matematica,
Universit\`a di Bologna, Bologna, Italy}
}

\date{This version: \today}

\maketitle

\begin{abstract}

We consider a class of ultra-parabolic Kolmogorov-type operators satisfying the H\"ormander's
condition. We prove an intrinsic Taylor formula with global and local bounds for the remainder
given in terms of the norm in the homogeneous Lie group naturally associated to the differential
operator.

\end{abstract}

\noindent \textbf{Keywords}:  {Kolmogorov operators, hypoelliptic operators, H\"ormander's
condition, intrinsic Taylor formula}

%
%

\section{Introduction}\label{intro}
We consider a class of Kolmogorov 
operators of the form
\begin{equation}\label{e1}
  \Lc=\frac{1}{2}\sum_{i=1}^{p_{0}}\p_{x_{i}x_{i}}+\sum_{i,j=1}^{d}b_{ij}x_{j}\p_{x_{i}}+\p_{t},\qquad
  (t,x)\in\R\times\R^{d},
\end{equation}
where $1\le p_{0}\le d$ and $B=\left(b_{ij}\right)$ is a constant $d\times d$ matrix. If $p_{0}=d$
then $\Lc$ is a parabolic operator while in general, for $p_{0}<d$, $\Lc$ is degenerate and not
uniformly parabolic. Some structural assumptions on 
$B$ implying that $\Lc$ is a hypoelliptic operator will be introduced and discussed below.

Operators of the form \eqref{e1} appear in several applications in physics, biology and
mathematical finance. We recall that $\Lc$ is the linearized prototype of the Fokker-Planck
operator arising in fluidodynamics (cf. \cite{Chandresekhar}). Moreover $\Lc$ was extensively
studied by \cite{Kolmogorov91} as the infinitesimal generator of the linear stochastic equation in
$\R^{d}$
\begin{equation}\label{e2}
  \dd X_{t}=B X_{t}\dd t+\s \dd W_{t},\qquad 
\end{equation}
where $W$ is a $p_{0}$-dimensional standard Brownian motion and $\s$ is a $d\times p_{0}$ matrix
such that
  $$\s\s^{T}=\begin{pmatrix}
    I_{p_{0}} & 0 \\
    0 & 0 \
  \end{pmatrix},$$
with $I_{p_{0}}$ being the $p_{0}\times p_{0}$ identity matrix. A particular case of \eqref{e2} is
the well-known Langevin equation from kinetic theory, which in simplified form reads
  $$
  \begin{cases}
    \dd X^{1}_{t}=\dd W_{t} \\
    \dd X^{2}_{t}=X_{t}^{1}\dd t,
  \end{cases}
  $$
where $W$ is a real Brownian motion, and whose generator is the Kolmogorov operator
\begin{equation}\label{e5}
  \frac{1}{2}\p_{x_{1}x_{1}}+x_{1}\p_{x_{2}}+\p_{t},\qquad (t,x_{1},x_{2})\in\R^{3}.
\end{equation}
We also refer to \cite{Talay2011} for a recent study of Navier-Stokes equations involving more
general Kolmogorov-type operators.

In mathematical finance, Kolmogorov equations arise in models incorporating some sort of
dependence on the past: typical examples are Asian options (see, for instance, \cite{Ingersoll},
\cite{BarucciPolidoroVespri}, \cite{Pascucci08}, \cite{NyPasPol}) and some volatility models (see,
for instance, \cite{HobsonRogers} and \cite{FoschiPascucci}).

It is natural to place operator $\Lc$ in the framework of H\"ormander's theory; indeed, let us set
\begin{equation}\label{e3}
  X_{j}=\p_{x_{j}},\quad j=1,\dots,p_{0},\quad\text{ and }\quad Y=\langle
  Bx,\nabla\rangle+\p_{t},
\end{equation}
where $\langle\cdot,\cdot\rangle$ and $\nabla=\left(\p_{x_{1}},\dots,\p_{x_{d}}\right)$ denote the
inner product and the gradient in $\R^{d}$ respectively. Then $\Lc$ can be written as a sum of
vector fields:
  $$\Lc=\frac{1}{2}\sum_{j=1}^{p_{0}}X_{j}^{2}+Y.$$
Under the H\"ormander's condition
\begin{equation}\label{e4}
  \text{rank}\left(\text{Lie}(X_{1},\dots,X_{p_{0}},Y)\right)=d+1,
\end{equation}
operator $\Lc$ is hypoelliptic and \cite{Kolmogorov2} and \cite{Hormander} constructed an explicit
fundamental solution of $\Lc u=0$, {which} is the transition density of $X$ in \eqref{e2}. We
remark that $X$ is a Gaussian process and condition \eqref{e4} turns out to be equivalent to the
non-degeneracy of the covariance matrix of $X_{t}$ for any positive $t$ (see, for instance,
\cite{Karatzas} and \cite{Pascuccibook2011}).

Operator $\Lc$ in \eqref{e1} is the prototype of the more general class of {\it Kolmogorov
operators with variable coefficients.} The study of general Kolmogorov operators has been
successfully carried {out} by several authors in the framework of the theory of homogeneous
groups: \cite{Folland75}, \cite{FollandStein1982}, \cite{Varopoulos} and \cite{Bonfiglioli07}
serve as a reference for the analysis of homogeneous groups. We recall that
\cite{LanconelliPolidoro1994} first studied the non-Euclidean intrinsic geometry induced by
Kolmogorov operators and \cite{Polidoro94}, \cite{amrx} proved the existence of a fundamental
solution under optimal regularity assumptions on the coefficients; in particular,
\cite{Polidoro94} generalized and greatly improved the classical results by \cite{Weber},
\cite{Il'in}, \cite{Sonin} and
\cite{Gencev} where unnecessary Euclidean-type regularity was required. 

The intrinsic Lie group structure modeled on the vector fields $X_{1},\dots,X_{p_{0}},Y$ and the
related non-Euclidean functional analysis (H\"older and Sobolev spaces) were studied by several
authors, among others \cite{Ragusa}, \cite{Francesco}, \cite{BraCEMA}, \cite{Manfredini},
\cite{Lunardi1997}, \cite{Kunze}, \cite{NyPasPol10}, \cite{Priola} and \cite{Menozzi}. When
dealing with intrinsic H\"older spaces, Taylor-type formulas (and the related estimates for the
remainder) form one of the cornerstones for the development of the theory. Classical results about
intrinsic Taylor polynomials on homogeneous groups were proved in great generality by
\cite{FollandStein1982}. Recently, \cite{Bonfiglioli2009} 
derived explicit formulas for Taylor polynomials on homogeneous groups and the corresponding remainders by adapting
the classical Taylor formula with integral remainder.

The main result of this paper is a new and more explicit representation of the intrinsic Taylor
polynomials for Kolmogorov-type homogeneous groups. The distinguished features of our formulas are
as follows:
\begin{itemize}
  \item[i)] in \cite{FollandStein1982} and \cite{Bonfiglioli2009}, Taylor polynomials of order {$n$}
  are defined for functions that are differentiable up to order {$n$} in the Euclidean sense;
  the constants in the error estimates for the remainders (that is, the differences between the function and its Taylor
  polynomials) depend on the norms of the function in the Euclidean H\"older spaces. Conversely, in this paper we define {$n$}-th order Taylor
  polynomials for functions that are regular {\it in the intrinsic sense} and the constants
  appearing in the error estimates {\it depend only on the norms of the intrinsic derivatives up to order {$n$}}.
  At the best of our knowledge, a similar result under such intrinsic regularity assumptions only appeared in \cite{Arena},
  but limited to the particular case of the Heisenberg group.
  Moreover, the fact that we assume intrinsic regularity on the function, as opposed to Euclidean one,
  allows us to yield some global error bounds for the remainders when the function belongs to the \emph{intrinsic global} H\"older spaces.
  This represents another key difference with respect to the existing literature, where such bounds are only local.

  \item[ii)] since the vector fields {$X_{1},\dots,X_{p_{0}}$ do not commute with $Y$},
  there are different representations for the Taylor polynomials 
  depending on the order of the
  derivatives: specifically, the representation in \cite{FollandStein1982} and
  \cite{Bonfiglioli2009} is given as a sum over all possible permutations of the derivatives. {Thus}, computing explicitly the {$n$-th order}
  Taylor polynomials can be very lengthy since the number of
  terms {involved} 
  grows proportionally to $d^{n}$.
  On the contrary, even though our Taylor polynomials are 
  algebraically equivalent to those given by \cite{FollandStein1982} and
  \cite{Bonfiglioli2009}, in Theorem \ref{th:main} 
  we determine a privileged way to order the vector fields so that we are able to get compact Taylor polynomials {with a number of terms increasing
  linearly with respect to the order of the polynomial itself} (see \eqref{eq:def_Tayolor_n} 
  below); 
  {this is quite relevant for practical computations}, as we
  will show through a simple example in {Section \ref{sec:comparison_taylor}}.

\item[iii)] besides the theoretical interest, our result might be useful for diverse applications.
For instance, in the recent works by \cite{LPP4} and \cite{PP_compte_rendu} the authors have
developed a perturbative technique to analytically approximate the solution of a parabolic Cauchy
{problem} with variable coefficients. The Taylor polynomials of the coefficients and of the
terminal datum play an important role in this technique. For instance, the short-time precision of
the approximation turns out to be dependent on the regularity of the terminal datum. Within this
prospective, an intrinsic Taylor formula represents a crucial ingredient in order to extend such
results to the case of ultra-parabolic (i.e. $p_0<d$) Kolmogorov operators with variable
coefficients. In particular, the intrinsic regularity of the coefficients and of the terminal
datum can be exploited to improve the accuracy of the approximate solutions. We refer to Section
\ref{sec:asympt_exp} for further details.
\end{itemize}
The paper is organized as follows: in the next section we state the structural hypothesis on the
matrix $B$, we give the definition of \emph{intrinsic H\"older spaces} and we state our main
result. In Section \ref{sec:3} we 
{review and compare with} the previous literature (Sections \ref{sec:comparison_taylor} and
{\ref{sec:comparison_holder}}) and present examples and applications (Section
\ref{sec:asympt_exp}). In Section \ref{sec:prelim} we prove some results that are {preliminary to
the proof of }the main theorem, which will be eventually proved in Section \ref{sec:proof}.

\section{H\"older spaces and Taylor expansions}\label{subseq:definitions}
As first observed by \cite{LanconelliPolidoro1994}, operator $\Lc$ in \eqref{e1} has the
remarkable property of being invariant with respect to left translations in the group
$\left(\Rdd,\circ\right)$, where the non-commutative group law ``$\circ$'' is defined by
\begin{equation}\label{eq:translation}
 (t,x)\circ (s,\xi) = \left(t+s,e^{t B}x+\xi\right),\qquad (t,x),(s,\xi)\in \Rdd.
\end{equation}
Precisely, we have
\begin{equation}\label{eq:translation_invariance}
\big(\Lc u^{(s,\xi)}\big) (t,x)=(\Lc u)\big((s,\xi)\circ(t,x)\big), \qquad (t,x),(s,\xi)\in\Rdd,
\end{equation}
where
\begin{equation}
 u^{(s,\xi)} (t,x)=u((s,\xi)\circ (t,x)).
\end{equation}
Notice that $(\Rdd,\circ)$ is a group with the identity element ${\text{Id}=(0,0)}$ and inverse $(t,x)^{-1}=\left(-t,e^{-tB}x\right)$.

\cite{LanconelliPolidoro1994} proved that the H\"ormander's condition \eqref{e4} is equivalent to
the following one: for a certain {basis} on $\R^{d}$, the matrix $B$ takes the form
\begin{equation}\label{eq:B_blocks}
B=\left(
\begin{array}{ccccc}
\ast&\ast&\cdots&\ast&\ast\\ B_1 & \ast &\cdots& \ast & \ast \\ 0 & B_2 &\cdots& \ast& \ast \\
\vdots & \vdots &\ddots& \vdots&\vdots \\ 0 & 0 &\cdots& B_r& \ast
\end{array}
\right)
\end{equation}
where each $B_j$ is a $p_j\times p_{j-1}$ matrix of rank $p_j$ with
\begin{equation}
p_0\geq p_1\geq \cdots \geq p_r\geq 1, \qquad \sum_{j=0}^r p_j = d,
\end{equation}
and the $\ast$-blocks are arbitrary. Moreover, if (and only if) the $\ast$-blocks in
\eqref{eq:B_blocks} are null then {\it $\Lc$ is homogeneous of degree two} with respect the
dilations $\left(D(\lambda)\right)_{\l>0}$ on $\Rdd$ given by
\begin{equation}\label{eq:dilation}
 D(\lambda)=\textrm{diag}\big(\lambda^2,\lambda I_{p_0},\lambda^{3}I_{p_1},\cdots,\lambda^{2r+1}I_{p_r}\big),
\end{equation}
where $I_{p_j}$ are $p_j\times p_j$ identity matrices: specifically, we have
\begin{equation}\label{eq:dilation_invariance}
 \big(\Lc u^{(\lambda)}\big)(t,x)=\lambda^2(\Lc u)\big(D(\lambda)(t,x)\big), \qquad (t,x)\in\Rdd,\ \lambda>0 ,
\end{equation}
where
\begin{equation}
 u^{(\lambda)} (t,x)=u(D(\lambda)(t,x)).
\end{equation}

Throughout this paper we assume the following standing
\begin{assumption}\label{assume:horm}
{\it $B$ is a $d\times d$ constant matrix as in \eqref{eq:B_blocks}, where each block $B_j$ has
rank $p_j$ and each $\ast$-block is null.}
\end{assumption}

\begin{remark}
Under {Assumption \ref{assume:horm}}, the matrix $B$ uniquely identifies the \emph{homogeneous Lie
group} (in the sense of \cite{FollandStein1982})
 $$\mathcal{G}_B:=\left(\Rdd,\circ,D(\lambda)\right).$$
\end{remark}
We define the $D(\l)$-homogeneous norm on $\mathcal{G}_B$ as follows:
\begin{equation}\label{e7}
 \norm{(t,x)}_B=|t|^{1/2}+|x|_B,\qquad |x|_B=\sum_{j=1}^d |x_j|^{1/q_j},
\end{equation}
where $(q_j)_{1\leq j\leq d}$ are the integers such that
\begin{equation}
 D(\lambda)=\textrm{diag}\big(\lambda^2,\lambda^{q_1},\cdots,\lambda^{q_d} \big).
\end{equation}
For any $\z\in\Rdd$, we denote by
\begin{equation}\label{e8}
  D_{B}(\z,r)=\{z\in\Rdd\mid \norm{\z^{-1}\circ z}_B< r\}
\end{equation}
the open ball of radius $r$, centered at $\z$, in the homogeneous group $\mathcal{G}_B$.

\begin{remark} There exist two constants $C_1\geq 1$ and $C_2>0$, both depending only on $B$, such that
\begin{align}
 \norm{\z\circ z}_B & \leq C_1\left(\norm{\z\circ \eta}_B+\norm{\eta^{-1}\circ z}_B\right),\qquad && z,\z,\eta \in \Rdd,\\
 \frac{1}{C_2}|z-\z| & \leq \norm{\z^{-1}\circ z}_B\leq C_2|z-\z|^{2r+1}, && \text{for } |z-\z|, \norm{\z^{-1}\circ z}_B \leq 1.
\end{align}
The first inequality implies that $\norm{\cdot}_B$ is a quasi-norm, 
while the second formula shows that {the intrinsic distance is locally equivalent to the
Euclidean one.} 
For a proof we refer to \cite{Manfredini}, Proposition 2.1.
\end{remark}

Next we introduce the notions of $B$-intrinsic regularity and $B$-H\"older space. Let  $X$ be a
Lipschitz vector field on $\Rdd$. For any $z\in\Rdd$, we denote by $\d\mapsto e^{\d X }(z)$ the
integral curve of $X$ defined as the unique solution of
\begin{equation}
\begin{cases}
 \frac{d}{d\d}e^{\d X }(z)= X\left(e^{\d X }(z)\right),\qquad  &\delta\in\R, \\
 e^{\d X }(z)\vert_{\d=0}= z.
\end{cases}
\end{equation}
Explicitly, if $X\in\{X_1,\cdots,X_{p_0},Y\}$ is one of the vector fields in \eqref{e3}, we have
\begin{equation}\label{eq:def_curva_integrale_campo}
 e^{\d X_{i} }(t,x)=(t,x+\delta e_i),\quad i=1,\cdots,p_0,\qquad
 e^{\d Y }(t,x)=(t+\delta,e^{\delta B}x),
\end{equation}
for any $(t,x)\in\Rdd$.

In order to define a {\it gradation} for the homogeneous group $\mathcal{G}_B$ (see Section 2.4 in
\cite{Bonfiglioli2009}), we associate a {\it formal degree} $m_{X}\in\R_{+}$ to each
$X\in\{X_1,\cdots,X_{p_0},Y\}$ in the following canonical way:
\begin{assumption}\label{assume:formal_degrees}
{\it The formal degrees of the vector fields $X_{1},\cdots,X_{p_{0}}$ and $Y$ are set as
$\formaldeg_{X_{j}}=1$ for $1\leq j \leq p_0$ and $\formaldeg_{Y}=2$.}
\end{assumption}
Next we recall the general notion of Lie differentiability and H\"older regularity.
\begin{definition}\label{def:intrinsic_alpha_Holder3}
Let $X$ be a Lipschitz vector field and $u$ be a real-valued function defined in a neighborhood of
$z\in \Rdd$. We say that $u$ is \emph{$X$-differentiable} in $z$ if the function $\d\mapsto
u\left(e^{\d X }(z)\right)$ is differentiable in $0$. {We will refer to the function $z \mapsto
\frac{d}{d \d} u\left(e^{\d X }(z)\right)\big|_{\d=0}$ as \emph{$X$-Lie derivative of $u$}, or
simply \emph{Lie derivative of $u$} when the dependence on the field $X$ is clear from the
context.}
\end{definition}
\begin{definition}\label{def:intrinsic_alpha_Holder}
Let $X$ be a Lipschitz vector field on $\Rdd$ with formal degree $\formaldeg_{X}>0$. For
$\a\in\,]0,\formaldeg_{X}]$, we say that $u\in C_{X}^{\alpha}$ if the semi-norm
\begin{equation}
 \norm{u}_{C^{\a}_{X}}:=
 \sup_{z\in\Rdd\atop \d\in\R\setminus\{0\}} \frac{
 \left|u\left(e^{\delta X }(z)\right)-
 u(z)\right|}{|\delta|^{\frac{\alpha}{\formaldeg_{X}}}}
\end{equation}
is finite.
\end{definition}

\medskip\noindent Now, let $\O$ be a domain in $\Rdd$. For any $z\in\O$ we set
  $$\d_{z}=\sup\left\{\bar{\d}\in\,]0,1]\mid e^{\d X}(z)\in\O\text{ for any }\d\in [-\bar{\d},\bar{\d}]\right\}.$$
If $\O_{0}$ is a bounded domain with $\overline{\O}_{0}\subseteq\O$, we set
  $$\d_{\O_{0}}=\min_{z\in \overline{\O}_{0}}\d_{z}.$$
Note that $\d_{\O_{0}}\in\,]0,1]$.
\begin{definition}\label{def:intrinsic_alpha_Holder2}
For $\a\in\,]0,\formaldeg_{X}]$, we say that $u\in C_{X,\text{\rm loc}}^{\alpha}(\O)$ if for any
bounded domain $\O_{0}$ with $\overline{\O}_{0}\subseteq\O$, the semi-norm
\begin{equation}\label{e12}
 \left\|u\right\|_{C_{X}^{\alpha}(\O_{0})}:=\sup_{z\in \O_{0}\atop 0<|\d|<\d_{\O_{0}}} \frac{\left|u\left(e^{\delta X }(z)\right)-
 u(z)\right|}{|\delta|^{\frac{\alpha}{\formaldeg_{X}}}}
\end{equation}
is finite.
\end{definition}

Now we define the intrinsic H\"older spaces on the homogeneous group $\mathcal{G}_B$.
\begin{definition}\label{def:C_alpha_spaces}
Let $\a\in\,]0,1]$, then:
\begin{itemize}
  \item [i)] $u\in C^{0,\a}_{B}$ if $u\in C^{\a}_{Y}$ and $u\in C^{\a}_{\p_{x_{i}}}$ for any $i=1,\dots,p_{0}$. For any $u\in C^{0,\a}_{B}$ we
  define the semi-norm
\begin{equation}\label{e9}
  \norm{u}_{C^{0,\a}_{B}}:=\norm{u}_{C^{\a}_{Y}}+\sum_{i=1}^{p_0} \norm{u}_{C^{\a}_{\partial_{x_i}}}.
\end{equation}
  \item [ii)] $u\in C^{1,\a}_{B}$ if $u\in C^{1+\a}_{Y}$ and $\p_{x_{i}}u\in C^{0,\a}_{B}$ for any
  $i=1,\dots,p_{0}$. For any $u\in C^{1,\a}_{B}$ we define  the semi-norm
\begin{equation}\label{e10}
  \norm{u}_{C^{1,\a}_{B}}:=\norm{u}_{C^{\a+1}_{Y}}+\sum_{i=1}^{p_0} \norm{\partial_{x_i}u}_{C^{0,\a}_{B}}.
\end{equation}
  \item [iii)] {For} $k\in\Nb$ with $k\ge2$, $u\in C^{k,\a}_{B}$ if $Yu\in C^{k-2,\a}_{B}$ and $\p_{x_{i}}u\in C^{k-1,\a}_{B}$ for any
  $i=1,\dots,p_{0}$. For any $u\in C^{k,\a}_{B}$ we define  the semi-norm
\begin{equation}\label{e11}
  \norm{u}_{C^{k,\a}_{B}}:=\norm{Y u}_{C^{k-2,\a}_{B}}+\sum_{i=1}^{p_0} \norm{\partial_{x_i}u}_{C^{k-1,\a}_{B}}.
\end{equation}
\end{itemize}
Similarly, according to Definition \ref{def:intrinsic_alpha_Holder2}, we define the spaces
$C^{k,\a}_{B,\text{\rm loc}}(\O)$ of {\it locally} H\"older continuous functions on a domain $\O$
of $\Rdd$, and the related semi-norms $\norm{\cdot}_{C^{k,\a}_{B}(\O_{0})}$ on bounded domains
$\O_{0}$ with $\overline{\O}_{0}\subseteq\O$.
\end{definition}

\begin{remark}\label{rem:inclusions}
The following inclusion holds{\blue :}
$C^{k,\a}_{B,\text{\rm loc}} \subseteq C^{k',\a'}_{B,\text{\rm loc}}$ for $0\leq k'\leq k$ and $0<
\a' \leq \a \leq 1$. Moreover we have
 $C^{k,\a}_{B}\subseteq C^{k,\a}_{B,\text{\rm loc}}$ for $k\geq 0.$
\end{remark}

\medskip In the sequel, $\beta=(\beta_1,\cdots, \beta_d)\in \Ndzero$ will denote a multi-index. As
usual
 $$|\beta|:=\sum_{j=1}^d \beta_j\quad \text{ and }\quad \beta!:=\prod_{j=1}^d \left(\beta_j !\right)$$
are called the length and the factorial of $\b$ respectively. Moreover, for any $x\in\Rd$, we set
 $$x^{\beta}=x_1^{\beta_1}\cdots x_d^{\beta_d}\quad \text{ and }\quad
 \partial^{\beta}=\partial^{\beta}_x=\partial_{x_1}^{\beta_1}\cdots
 \partial_{x_d}^{\beta_d}.$$
We also introduce the $B$-length of $\b$ defined as
 $$|\beta|_B:=\sum_{i=0}^r (2i+1) \big| \beta^{[i]} \big|$$
where $\beta^{[i]}\in\Ndzero$ is the multi-index
\begin{equation}\label{eq:sub_multi_index}
  \beta^{[i]}_{k}:=
  \begin{cases}
    \beta_{k} & \text{for}\ \bar{p}_{i-1}< k \le\bar{p}_{i}, \\
    0 & \text{otherwise},
  \end{cases}
\end{equation}
with
\begin{equation}\label{e6}
  \bar{p}_{i}=p_{0}+p_{1}+\cdots+p_{i},\qquad 0\le i\le r,
\end{equation}
and $\bar{p}_{-1}\equiv 0$. We are now in position to state our main result.
\begin{theorem}\label{th:main}
Let $\O$ be a domain of $\Rdd$, $\alpha\in ]0,1]$ and $n\in\Nzero$. If $u\in C^{n,\a}_{B,\text{\rm
loc}}(\O)$ then we have:
\begin{enumerate}
\item[1)] there exist the derivatives
\begin{equation}\label{eq:maintheorem_part1}
 Y^k \partial_x^{\beta}u\in C^{n-2k-|\beta|_B,\alpha}_{B,\text{\rm loc}}(\O),
 \qquad {0\leq 2 k + |\beta|_B \leq n};
\end{equation}
\item[2)]
for any $\z\in\O$ there exist $r_{\z},R_{\z}>0$ such that $\overline{D_B(\z,R_{\z})}\subseteq \O$ 
and
\begin{equation}\label{eq:estim_tay_n_loc}
 \left|u(z)-T_n u(\z,z)\right|\le c_{B,\z} \|u\|_{C^{n,\a}_{B{,\text{\rm loc}}}(D_B(\z,R_{\z}))}\|\z^{-1}\circ z\|_{B}^{n+\a},\qquad z\in D_B(\z,r_{\z}),
\end{equation}
where $c_{B,\z}$ is a constant that depends on $B$ and $\z$, while $T_n u(\z,\cdot)$ is the
\emph{$n$-th order $B$-Taylor polynomial of $u$ around $\z=(s,\xi)$} defined as
 \begin{equation}
 T_n u(\z,z):=  \sum_{{0\leq 2 k + |\beta|_B \leq n}}\frac{1}{k!\,\beta!}
 \big( Y^k \partial_{\xi}^{\beta}u(s,\xi)\big) (t-s)^k\big( x-e^{(t-s)B}\xi  \big)^{\beta},\qquad
 z=(t,x)\in\Rdd; \label{eq:def_Tayolor_n}
\end{equation}
\item[3)] if $u\in C^{n,\a}_{B}$ then we have
\begin{align}\label{eq:ste31}
 Y^k \partial_x^{\beta}u\in C^{n-2k-|\beta|_B,\alpha}_B\qquad \text{ for }\ {0\leq 2 k + |\beta|_B \leq n},
\end{align}
and
\begin{equation}\label{eq:estim_tay_n}
  \big| u(z)-T_n u(\z,z) \big|\le c_B \|u\|_{C^{n,\a}_{B}}  \|\z^{-1}\circ z\|_{B}^{n+\a}, \qquad {z,\z\in\Rdd,}
\end{equation}
where $c_B$ is a positive constant that only depends on $B$.
\end{enumerate}
\end{theorem}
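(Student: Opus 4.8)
The plan is to prove the three statements jointly by induction on $n$, exploiting the recursive structure of the intrinsic H\"older spaces $C^{n,\a}_B$ (Definition \ref{def:C_alpha_spaces}), in which regularity of $u$ is defined in terms of regularity of $\p_{x_i}u$ and $Yu$. The base case $n=0$ reduces to the following assertion: if $u\in C^{0,\a}_{B,\text{\rm loc}}(\O)$ then $|u(z)-u(\z)|\le c_{B,\z}\|u\|_{C^{0,\a}_{B,\text{\rm loc}}}\|\z^{-1}\circ z\|_B^{\a}$ locally, and globally when $u\in C^{0,\a}_B$. This should follow by connecting $\z$ to $z$ through a bounded-length concatenation of integral curves of the fields $X_1,\dots,X_{p_0},Y$; the existence of such a ``control path'' with length comparable to $\|\z^{-1}\circ z\|_B$ is precisely the kind of chain-of-exponentials argument that reflects H\"ormander's condition \eqref{e4} in the form \eqref{eq:B_blocks}, and is a preliminary fact I would isolate in Section \ref{sec:prelim}. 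The exponents $\a/\formaldeg_X$ in Definitions \ref{def:intrinsic_alpha_Holder}--\ref{def:intrinsic_alpha_Holder2} are designed so that a displacement of homogeneous size $\varrho$ along $X$ produces an increment of $u$ of size $\varrho^{\a}$, which is exactly what makes the telescoping sum along the control path collapse to the bound $\|\z^{-1}\circ z\|_B^{\a}$.

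For the inductive step, assume the theorem holds for all orders strictly less than $n$ and let $u\in C^{n,\a}_{B,\text{\rm loc}}(\O)$. By definition $\p_{x_i}u\in C^{n-1,\a}_{B,\text{\rm loc}}(\O)$ and $Yu\in C^{n-2,\a}_{B,\text{\rm loc}}(\O)$, so part 1) for $u$ at order $n$ follows immediately from part 1) applied to $\p_{x_i}u$ and $Yu$ at orders $n-1$ and $n-2$: every derivative $Y^k\p_x^\beta u$ with $2k+|\beta|_B\le n$ is either $Y^{k}\p_x^{\beta'}(\p_{x_i}u)$ or $Y^{k-1}\p_x^{\beta}(Yu)$, and the index bookkeeping $|\beta|_B = \sum_i(2i+1)|\beta^{[i]}|$ with $m_{\p_{x_i}}=1$, $m_Y=2$ matches the drop in order. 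Part 3) (the global statement) is handled the same way once part 2) is in place with the correct constant dependence. The essential analytic work is in part 2): the estimate \eqref{eq:estim_tay_n_loc}. Here I would prove it via the integral-remainder / fundamental-theorem-of-calculus approach along a single well-chosen curve, organized according to the ``privileged ordering'' of the vector fields advertised in the introduction — this is what produces the compact polynomial \eqref{eq:def_Tayolor_n} with a linearly-growing number of terms rather than the $d^n$-term symmetric sum of \cite{FollandStein1982}, \cite{Bonfiglioli2009}.

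Concretely, I expect the argument for \eqref{eq:estim_tay_n_loc} to go as follows. Write $\z=(s,\xi)$ and $z=(t,x)$, and observe that $\z^{-1}\circ z = (t-s,\, x - e^{(t-s)B}\xi)$, so the natural ``intrinsic increment'' has time-component $h:=t-s$ and space-component $y:=x-e^{hB}\xi$; the monomial appearing in \eqref{eq:def_Tayolor_n} is exactly $\frac{1}{k!\beta!}h^k y^\beta$. Parametrize the passage from $\z$ to $z$ by first flowing along $Y$ for time $h$ (bringing the base point to $(t, e^{hB}\xi)$, which has the same homogeneous distance structure) and then along the spatial directions to reach $(t,x)$; more precisely I would use a Taylor expansion in the $Y$-flow combined with an expansion in the remaining $x$-variables, tracking the homogeneous weights $q_j$ through the dilation \eqref{eq:dilation} so that each term of weighted order $\le n$ is kept and the remainder, being of weighted order $>n$, is controlled by $\|\z^{-1}\circ z\|_B^{n+\a}$ using the order-$<n$ H\"older bounds on the derivatives $Y^k\p_x^\beta u$ supplied by the inductive hypothesis (part 1) together with part 2) at lower order, the latter giving quantitative control of the higher derivatives' increments in terms of $\|u\|_{C^{n,\a}_B}$. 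The quasi-triangle inequality for $\|\cdot\|_B$ and the local equivalence with the Euclidean distance (the Remark after \eqref{e8}) are needed to pass between the curve length and $\|\z^{-1}\circ z\|_B$, and to absorb the error committed when the control path leaves the small ball — this is why $r_\z<R_\z$ must be chosen small depending on $\z$. For part 3), when $u\in C^{n,\a}_B$ all these inequalities hold with constants independent of the base point (the semi-norms in Definition \ref{def:C_alpha_spaces} are global suprema), so one gets \eqref{eq:estim_tay_n} with $c_B$ depending only on $B$; the only subtlety is that one must now run the argument without the a priori assumption that the curve stays in a fixed bounded set, which is legitimate because the global H\"older control is uniform in $\Rdd$.

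The main obstacle, I expect, is not the inductive scaffolding but the explicit verification that the privileged ordering of the vector fields yields \emph{exactly} the polynomial \eqref{eq:def_Tayolor_n} and that the resulting remainder is genuinely of weighted homogeneous order $n+\a$ — i.e., correctly accounting for the interaction between the non-commuting flows of $Y$ and the $\p_{x_i}$ (which is what makes the naive Taylor expansion in \cite{Bonfiglioli2009} a symmetric sum), and checking that the commutator terms generated when one re-orders the derivatives to the privileged form either land among the retained low-order terms or have strictly higher weighted degree and hence go into the remainder. This bookkeeping, driven by the block structure \eqref{eq:B_blocks} of $B$ and the identity $e^{\d Y}(t,x)=(t+\d,e^{\d B}x)$ from \eqref{eq:def_curva_integrale_campo}, is where the real content of Theorem \ref{th:main} lies, and I would expect it to occupy the bulk of Section \ref{sec:proof}, with the preliminary control-path lemma and the elementary properties of $\|\cdot\|_B$ relegated to Section \ref{sec:prelim}.
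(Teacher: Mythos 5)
Your overall architecture --- induction on $n$, a control-path argument for the base case $n=0$, and a flow-along-$Y$-then-along-space decomposition for the Taylor estimate --- is in the right spirit, but there is a genuine gap in your inductive step for part 1), and it is precisely the point that makes a plain induction on $n$ fail. You claim that every derivative $Y^k\p_x^{\b}u$ with $2k+|\b|_B\le n$ is either of the form $Y^{k}\p_x^{\b'}(\p_{x_i}u)$ with $i\le p_0$ or of the form $Y^{k-1}\p_x^{\b}(Yu)$. This is false for the first-order Euclidean derivatives $\p_{x_l}u$ with $l>p_0$: taking $k=0$ and $\b=e_l$ with $p_0<l\le\bar{p}_1$ gives $|\b|_B=3\le n$ as soon as $n\ge 3$, yet $\p_{x_l}u$ is not a derivative of any $\p_{x_i}u$ ($i\le p_0$) nor of $Yu$, and Definition \ref{def:C_alpha_spaces} gives no a priori information --- not even existence --- about derivatives of $u$ along the directions $e_l$, $l>p_0$. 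The same obstruction recurs each time the order crosses $2m+1$, $m=1,\dots,r$, when the level-$m$ variables first enter the Taylor polynomial.

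The missing idea is that these derivatives must be \emph{constructed} from iterated commutators. One first proves a Taylor-type estimate for an auxiliary polynomial in which $\p_{x_l}u$ (for $\bar{p}_{m-1}<l\le\bar{p}_m$) is replaced by the Lie derivative $Y^{(m)}_{v}u=\langle B^{m}v,\nabla u\rangle$ along the $m$-fold commutator of $Y^{(0)}_v$ with $Y$, the relevant increments of $u$ being taken along the zig-zag trajectories $\g^{(n,k)}_{v,\d}$ that approximate the commutator flows (this is the content of Lemmas \ref{andl1_bis} and \ref{lem:connecting_curves} and of Propositions \ref{prop:complementary_new}--\ref{prop:complementary_bis_alternative}); only from the resulting estimate can one read off that the difference quotient of $u$ along $e_l$ converges, i.e.\ that $\p_{x_l}u$ exists, equals $Y^{(m)}_{v^{(m)}_l}u$, and inherits the required H\"older regularity. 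This forces the induction to be organized in separate blocks (from $2m$ to $2m+1$, where the new derivatives appear; from $2m+1$ to $2m+2$; and only for $n\ge 2r+1$ a uniform step $n\to n+1$), rather than the single step you propose. Your sketch of part 2) has the same lacuna: ``then along the spatial directions to reach $(t,x)$'' is only licit for the first $p_0$ coordinates, since for the remaining ones there is no admissible vector field to flow along; one must instead move along the curves $\g^{(n,k)}_{v,\d}$ and control the increment of $u$ there. Without these ingredients the argument does not close.
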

A direct consequence of estimate \eqref{eq:estim_tay_n} 
in the particular case $n=0$ is the following
\begin{corollary}\label{ccaa}
A function $u\in C^{0,\a}_{B}$ if and only if there exists a positive constant $c$ such that
  $$\left|u(z)-u(\z)\right|\le c \norm{\z^{-1}\circ z}_B^{\a},\qquad z,\z\in\Rdd,$$
i.e. $u$ is $B$-H\"older continuous in the sense of Definition 1.2 in \cite{Polidoro94}.
\end{corollary}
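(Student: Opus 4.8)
The statement is an equivalence, and the plan is to prove the two implications by short but separate arguments. The ``only if'' direction is the case $n=0$ of Theorem~\ref{th:main}(3), once one observes that the zeroth order $B$-Taylor polynomial reduces to a constant. The ``if'' direction is obtained by testing the assumed inequality along the integral curves of $\p_{x_1},\dots,\p_{x_{p_0}}$ and $Y$ and matching the result with Definitions~\ref{def:intrinsic_alpha_Holder} and~\ref{def:C_alpha_spaces}.

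For the ``only if'' part, let $u\in C^{0,\a}_B$, i.e. $u\in C^{n,\a}_B$ with $n=0$. In the sum \eqref{eq:def_Tayolor_n} the constraint $2k+|\beta|_B\le 0$, together with $k\ge 0$ and $|\beta|_B\ge 0$, forces $k=0$ and $\beta=0$, so the only surviving term is $\tfrac{1}{0!\,0!}\,u(s,\xi)=u(\z)$; hence $T_0u(\z,z)\equiv u(\z)$. Substituting this into \eqref{eq:estim_tay_n} gives
\begin{equation*}
|u(z)-u(\z)|\le c_B\,\|u\|_{C^{0,\a}_B}\,\norm{\z^{-1}\circ z}_B^{\a},\qquad z,\z\in\Rdd,
\end{equation*}
which is the claimed inequality with $c=c_B\,\|u\|_{C^{0,\a}_B}$ (any $c>0$ will do if the right-hand side vanishes).

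For the ``if'' part, assume $|u(z)-u(\z)|\le c\,\norm{\z^{-1}\circ z}_B^{\a}$ for all $z,\z\in\Rdd$. By Definition~\ref{def:C_alpha_spaces}(i) it suffices to check $u\in C^{\a}_Y$ and $u\in C^{\a}_{\p_{x_i}}$ for $i=1,\dots,p_0$, i.e. that the semi-norms of Definition~\ref{def:intrinsic_alpha_Holder} are finite (these spaces are well defined, since $\a\in\,]0,1]\subseteq\,]0,\formaldeg_X]$ for each of these fields). Fix such an $X$, a point $z\in\Rdd$ and $\delta\neq 0$, and apply the hypothesis with $\z=z$ and the running point replaced by $e^{\delta X}(z)$: $\big|u(e^{\delta X}(z))-u(z)\big|\le c\,\norm{z^{-1}\circ e^{\delta X}(z)}_B^{\a}$. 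From the explicit flows \eqref{eq:def_curva_integrale_campo} and the group law \eqref{eq:translation} a direct computation shows that $z^{-1}\circ e^{\delta X}(z)=\exp(\delta X)$ is independent of $z$, equal to $(0,\delta e_i)$ if $X=\p_{x_i}$ and to $(\delta,0)$ if $X=Y$. By the definition \eqref{e7} of the homogeneous norm, recalling that $q_i=1$ for $1\le i\le p_0$ and that $\formaldeg_{\p_{x_i}}=1$, $\formaldeg_Y=2$, one obtains $\norm{z^{-1}\circ e^{\delta X}(z)}_B^{\a}=|\delta|^{\a/\formaldeg_X}$ in every case. Hence the difference quotient in Definition~\ref{def:intrinsic_alpha_Holder} is $\le c$ uniformly in $z$ and $\delta$, so $\|u\|_{C^{\a}_X}\le c$; summing over the generating fields, $u\in C^{0,\a}_B$ with $\|u\|_{C^{0,\a}_B}\le(p_0+1)\,c$.

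There is no real obstacle here: all the substance sits in Theorem~\ref{th:main}, and the only point requiring a one-line check is the identity $z^{-1}\circ e^{\delta X}(z)=\exp(\delta X)$ with $\norm{\exp(\delta X)}_B=|\delta|^{1/\formaldeg_X}$, which is precisely the built-in compatibility between the formal degrees (Assumption~\ref{assume:formal_degrees}), the dilations \eqref{eq:dilation} and the group structure. The closing sentence of the statement simply records that the displayed estimate is Polidoro's notion of $B$-H\"older continuity.
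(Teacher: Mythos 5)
Your proof is correct and follows the route the paper intends: the ``only if'' direction is exactly the $n=0$ case of Theorem~\ref{th:main}(3) with $T_0u(\z,\cdot)\equiv u(\z)$, which is all the paper offers (it states the corollary as a ``direct consequence'' without further proof). Your ``if'' direction, which the paper leaves implicit, is the right elementary check — the identities $z^{-1}\circ e^{\delta X_i}(z)=(0,\delta e_i)$ and $z^{-1}\circ e^{\delta Y}(z)=(\delta,0)$ give $\norm{z^{-1}\circ e^{\delta X}(z)}_B^{\a}=|\delta|^{\a/\formaldeg_X}$ for each generating field, so the assumed estimate dominates every semi-norm in Definition~\ref{def:C_alpha_spaces}(i).
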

For a comparison between intrinsic and Euclidean H\"older continuity we refer to Proposition 2.1
in \cite{Polidoro94}.
\begin{corollary}
If $u\in C^{2r+1,\a}_{B,\text{\rm loc}}(\O)$, then there exists 
$\p_t u\in C^{0,\a}_{B,\text{\rm loc}}(\O)$. Moreover, we have
\begin{equation}\label{eq:time-derivative-2}
\p_t u(t,x)=Yu(t,x)- \< Bx,\nabla u(t,x )\>.
\end{equation}
\end{corollary}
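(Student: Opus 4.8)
The plan is to read off the first-order intrinsic derivatives of $u$ from Theorem \ref{th:main} applied with $n=2r+1$, then to \emph{decouple} the pure time-derivative of $u$ from the flow of $Y$ (which moves the space variable as well), and finally to verify the H\"older regularity of the resulting expression.

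\emph{Step 1 (existence and continuity of the first intrinsic derivatives).} I would first apply Theorem \ref{th:main} with $n=2r+1$. For $\beta=e_j$ (the multi-index with a single $1$ in position $j$), if $\bar{p}_{i-1}<j\le\bar{p}_{i}$ then $|e_j|_B=2i+1\le 2r+1$, so part~1) of the theorem, taking $k=0$, gives that each classical partial derivative $\partial_{x_j}u$ exists on $\O$ and, by Remark \ref{rem:inclusions}, $\partial_{x_j}u\in C^{2(r-i),\a}_{B,\text{\rm loc}}(\O)\subseteq C^{0,\a}_{B,\text{\rm loc}}(\O)$; taking instead $k=1$, $\beta=0$ (so $2k+|\beta|_B=2\le 2r+1$) gives that $Yu$ exists on $\O$ and $Yu\in C^{2r-1,\a}_{B,\text{\rm loc}}(\O)\subseteq C^{0,\a}_{B,\text{\rm loc}}(\O)$. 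Moreover, applying estimate \eqref{eq:estim_tay_n_loc} with $n=0$ (for which $T_0u(\z,\cdot)\equiv u(\z)$) together with the local equivalence between the intrinsic and the Euclidean distance, I would conclude that every function in $C^{0,\a}_{B,\text{\rm loc}}(\O)$ is continuous in the Euclidean sense; in particular $\nabla u=(\partial_{x_1}u,\dots,\partial_{x_d}u)$ is continuous on $\O$.

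\emph{Step 2 (existence of $\partial_t u$ and formula \eqref{eq:time-derivative-2}).} Fix $z_0=(t_0,x_0)\in\O$ and recall from \eqref{eq:def_curva_integrale_campo} that $e^{\d Y}(z_0)=(t_0+\d,e^{\d B}x_0)$. Set $w(\d):=e^{\d B}x_0-x_0$, so that $w(\d)=\d\,Bx_0+o(\d)$ as $\d\to0$. For $|\d|$ small enough that the relevant points lie in $\O$ I would write
\begin{equation*}
\frac{u(t_0+\d,x_0)-u(z_0)}{\d}
=\frac{u\big(e^{\d Y}(z_0)\big)-u(z_0)}{\d}
-\frac{u\big(t_0+\d,\,x_0+w(\d)\big)-u\big(t_0+\d,x_0\big)}{\d}.
\end{equation*}
As $\d\to0$, the first term on the right converges to $Yu(z_0)$ because $u$ is $Y$-differentiable at $z_0$ by Step~1. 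For the second term, integrating each $\partial_{x_j}u$ (which exists on $\O$) along a coordinate staircase path joining $x_0$ to $x_0+w(\d)$ and using the Euclidean continuity of $\nabla u$ established in Step~1, one obtains
\begin{equation*}
u\big(t_0+\d,\,x_0+w(\d)\big)-u\big(t_0+\d,x_0\big)=\langle w(\d),\nabla u(z_0)\rangle+o(\d)=\d\,\langle Bx_0,\nabla u(z_0)\rangle+o(\d),
\end{equation*}
so the second term converges to $\langle Bx_0,\nabla u(z_0)\rangle$. Letting $\d\to0$ shows that $\partial_t u(t_0,x_0)$ exists and equals $Yu(t_0,x_0)-\langle Bx_0,\nabla u(t_0,x_0)\rangle$, which is \eqref{eq:time-derivative-2}.

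\emph{Step 3 (H\"older regularity).} Rewriting \eqref{eq:time-derivative-2} as $\partial_t u=Yu-\sum_{i,j=1}^{d}b_{ij}\,x_j\,\partial_{x_i}u$, and recalling from Step~1 that $Yu$ and each $\partial_{x_i}u$ belong to $C^{0,\a}_{B,\text{\rm loc}}(\O)$, I would conclude once I observe that $C^{0,\a}_{B,\text{\rm loc}}(\O)$ is a vector space stable under multiplication by each coordinate function $x_j$. The latter is a routine check: estimating the increment of $x_jg$ along the fields $\partial_{x_i}$ and along $Y$ produces, besides $x_j$ times the increment of $g$, only an extra term of order $O(|\d|)$ coming from the increment of $x_j$ itself along the flow, and this is absorbed using $|\d|\le|\d|^{\a/\formaldeg_X}$ for $|\d|\le1$ together with the local boundedness of $g$ (which holds since $g$ is Euclidean continuous). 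Hence $\partial_t u\in C^{0,\a}_{B,\text{\rm loc}}(\O)$.

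I expect the only genuine obstacle to be the decoupling in Step~2: the flow of $Y$ is the sole intrinsic object that carries information about the time direction, yet it simultaneously drifts the space variable along $e^{\d B}x_0$, so one has to show that this drift contributes a term which is differentiable in $\d$ at $0$ with derivative $\langle Bx_0,\nabla u(z_0)\rangle$ — and this is precisely where the existence \emph{and} Euclidean continuity of the first-order spatial derivatives, both furnished by Theorem \ref{th:main}, enter. Steps~1 and 3 are essentially bookkeeping.
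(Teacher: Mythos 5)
Your proof is correct, but the key step is handled by a genuinely different (more elementary) argument than the paper's. The paper obtains the existence of $\p_t u$ and formula \eqref{eq:time-derivative-2} in one stroke from the quantitative estimate \eqref{eq:estim_tay_n_loc} with $n=2r+1$ applied to $\z=(t,x)$, $z=(t+\d,x)$: since $x-e^{\d B}x=-\d Bx+O(\d^2)$, the only first-order-in-$\d$ terms of $T_{2r+1}u(\z,z)$ are $\d\, Yu(t,x)-\d\,\langle Bx,\nabla u(t,x)\rangle$, while the remainder is $O\big(\|\z^{-1}\circ z\|_B^{2r+1+\a}\big)=O\big(|\d|^{1+\frac{\a}{2r+1}}\big)=o(\d)$; dividing by $\d$ gives the claim together with an explicit rate $O(|\d|^{\frac{\a}{2r+1}})$ for the difference quotient. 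You instead invoke Theorem \ref{th:main} only through part 1) (existence of $Yu$ and of \emph{all} the Euclidean derivatives $\p_{x_j}u$ --- correctly identifying that the order $2r+1$ is exactly what is needed, since the deepest variables have $B$-weight $2r+1$) and through the $n=0$ estimate combined with the local equivalence of the two distances (Euclidean continuity of $\nabla u$), and you then perform the decoupling via the classical staircase/mean-value argument ``partials exist in a neighborhood and are continuous at the point, hence a first-order expansion holds'', applied to the spatial drift $w(\d)=e^{\d B}x_0-x_0$ of the $Y$-flow. Both routes are sound; the paper's is shorter given the machinery already established and yields a rate, while yours avoids the full $n=2r+1$ Taylor estimate at this step. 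In Step 3 you are in fact more careful than the paper, which asserts $\p_t u\in C^{0,\a}_{B,\text{loc}}$ merely because the right-hand side of \eqref{eq:time-derivative-2} is built from functions in $C^{0,\a}_{B,\text{loc}}$: one must also check, as you do, that $C^{0,\a}_{B,\text{loc}}$ is stable under multiplication by the coordinate functions $x_j$, which costs an extra increment of order $O(|\d|)\le O(|\d|^{\a/\formaldeg_X})$ absorbed by local boundedness.
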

\proof In Theorem \ref{th:main} we take $\z=(t,x)$, $z=(t+\d,x)$ and note that, in this case, the
spatial increments become
\begin{equation}
 x-e^{\d B}x = -\d Bx + O(\d^2) \qquad \text{ as } \d \to 0.
\end{equation}
Now, by Theorem \ref{th:main} all the spatial first{-order} derivatives exist and
\begin{equation}
u(z)-T_{2r+1}u(\z,z)= u(t+\d,x)-u(t,x)-\d Yu(t,x)+\d \sum_{i=1}^d  \p_{x_i}u(t,x)(Bx)_i +O(\d^2),
\qquad \text{ as } \d \to 0.
\end{equation}
Since
\begin{equation}
\|\z^{-1}\circ z\|_B^{2r+1+\a} = \|(\d, x-e^{\d B}x)\|_B^{2r+1+\a} = O(|\d|^{1+\frac{\a}{2r+1}}),
\qquad \text{ as } \d \to 0,
\end{equation}
we get
\begin{equation}
 \frac{u(t+\d,x)-u(t,x)}{\d}-Yu(t,x)+ \sum\limits_{i=1}^d (Bx)_i \p_{x_i}u(t,x)=O(|\d|^{\frac{\a}{2r+1}}) \qquad \text{as } \d\to
 0.
\end{equation}
This implies that the time-derivative exists and formula \eqref{eq:time-derivative-2} holds. Now,
it also easily follows that $\p_t u\in C^{0,\a}_{B,\text{\rm loc}}(\O)$ since 
all the derivatives appearing in the right-hand side of
\eqref{eq:time-derivative-2} are in $ C^{0,\a}_{B,\text{\rm loc}}(\O)$.
\endproof

\section{Comparison with known results, examples and applications}\label{sec:3}
{
\subsection{Taylor formulas for homogeneous Lie groups}\label{sec:comparison_taylor}
Our results can be seen {within} 
the 
more general setting of \emph{homogeneous Lie groups} (cf.
\cite{FollandStein1982}). A Lie group $\mathcal{G}=(\R^N,*)$ is said to be homogeneous if there
exists a family of group automorphisms of $\mathcal{G}$, $(D_{\lambda})_{\lambda>0}$, called
dilations,  of the form
\begin{equation}
 D_{\lambda}(x_1,\dots,x_N)=(\lambda^{\sigma_1}x_1,\dots,\lambda^{\sigma_N}x_N),\qquad \lambda>0,
 \end{equation}
{for some $1\leq \sigma_1\leq \sigma_2\leq \cdots\leq \sigma_N$}. The existence of such dilations
implies that the exponential map $\mathrm{Exp}$ between the Lie algebra $\mathfrak{g}$ and
$\mathcal{G}$ is a global diffeomorphism whose inverse is denoted by $\mathrm{Log}$. Moreover, we
have a privileged basis on $\mathfrak{g}$, the Jacobian one, whose elements are the left-invariant
vector fields $Z_i$ uniquely defined by
 $$
 Z_i\vert_{x=0}\equiv \p_{x_i} \qquad i=1,\dots,N.
 $$
In this framework it is natural to define the intrinsic degree of $Z_i$ 
as $\sigma_i$ and the $D_{\lambda}$-homogeneous norm
\begin{equation}
|x|_{\mathbb{G}}=\sum_{i=1}^N |x_i|^{\frac{1}{\sigma_i}}.
\end{equation}
Following \cite{Bonfiglioli2009}, the {$n$-th order} intrinsic Taylor polynomial $P_n f(x_0,\cdot)$ of 
a function $f$ {around the} 
point $x_0$, can be defined as the unique polynomial function such that
\begin{equation}
\label{eq:def_pol_taylor_bonf}
 f(x)- P_n f(x_0,x) = O(|x_0^{-1}*x|_{\mathbb{G}}^{n+\varepsilon}) \qquad \text{as } |x_0^{-1}*x|_{\mathbb{G}}\to 0,
 \end{equation}
for some $\varepsilon>0$. For $f\in C^{n+1}$ existence and uniqueness of $P_n f$ was proved in
\cite{FollandStein1982}; under the same hypothesis, a more explicit expression and a better
estimate of the remainder was given in \cite{Bonfiglioli2009}. Precisely, in the latter the author
proved that
 \begin{equation}\label{eq:pol_taylor_bonf}
  P_n f(x_0,x) = f(x_0)+ \sum_{k=1}^n \sum_{\genfrac{}{}{0pt}{}{1\leq i_1,\dots,i_k\leq N}{I=(i_1,\dots,i_k),\:\: \sigma(I)\leq n}  }\frac{Z_I f(x_0)}{k!}\mathrm{Log}_{i_1}(x_0^{-1}*x)\cdots \mathrm{Log}_{i_k}(x_0^{-1}*x).
  \end{equation}
Here $\sigma(I):= i_1\sigma_{i_1}+\cdots+i_k\sigma_{i_k} $ denotes the intrinsic order of the
operator $Z_I:=Z_{i_1}\cdots Z_{i_k}$ and $\mathrm{Log}_{i}$ is the $i$-th component of the
$\mathrm{Log}$ map in the basis $\{Z_1,\dots,Z_N\}$.

{Note that, in general, operators $Z_i$ do not commute. Therefore, formula
\eqref{eq:pol_taylor_bonf} typically involves a large number of terms. In the special case of a
Kolmogorov-type group, the Taylor polynomial \eqref{eq:def_Tayolor_n} is much more compact that
\eqref{eq:pol_taylor_bonf} because we can exploit the fact that all but one of the $Z_i$ coincide
with Euclidean derivatives and {thus} commute {with each other}; moreover, {our increments along
the integral curves of the vector fields are different from those}
in \eqref{eq:pol_taylor_bonf}. We illustrate this fact in the following example.

Let us consider the simplest Kolmogorov group, namely the one induced by the operator defined in
\eqref{e5}. This case corresponds to the matrix }
\begin{equation}\label{eq:B_matrix_kolm_prot}
 B=\begin{pmatrix}
 0 & 0 \\ 1 & 0
 \end{pmatrix},
\end{equation}
and the dilations $D(\lambda) $ take the following explicit form:
\begin{equation}
D(\lambda)(t,x_1,x_2)=(\lambda^2 t,\lambda x_1,\lambda^3 x_2),\qquad (t,x_1,x_2)\in \R\times \R^2.
\end{equation}
Moreover, if $z=(t,x_1,x_2), \z=(s,\x_1,\x_2)$, then we also have
\begin{equation}
\z\circ z =(s+t,x_1+\x_1,x_2+\x_2 + t\x_1),\qquad \z^{-1}\circ z=(t-s,x_1-\x_1,x_2-\x_2 - (t-s)\x_1).
\end{equation}
The components of {left-hand side vector in} the previous formula are exactly the increments
{appearing in \eqref{eq:def_Tayolor_n}}
. With regard to formula \eqref{eq:pol_taylor_bonf}, 
we have
\begin{equation}
Z_0 = Y,\qquad Z_1 = \p_{x_1},\qquad Z_2 = \p_{x_2},
\end{equation}
while the corresponding components of the $\mathrm{Log}$ map are
\begin{equation}
\mathrm{Log}_0(\z^{-1}\circ z) = t-s,\quad \mathrm{Log}_1(\z^{-1}\circ z)=x_1-\x_1,\quad \mathrm{Log}_2(\z^{-1}\circ z) = x_2-\x_2 - (t-s)\x_1 -\frac{(t-s)(x_1-\x_1)}{2}.
\end{equation}
Note that the first two components coincide with the increments mentioned above while the third
one is different. It follows that, up to order two, the two versions of the Taylor polynomial
coincide. On the other hand, 
{according to} our definition, the third and fourth polynomials are given by
\begin{align}
 T_3 u(\z,z)&=T_2 u(\z,z) + \frac{1}{3!}\p_{x_1}^3u(\z)(x_1-\x_1)^3 + Y\p_{x_1}u(\z)(x_1-\x_1)(t-s)+ \p_{x_2}u(\z)(x_2-\x_2-(t-s)\x_1),\\
 T_4 u(\z,z)&=T_3 u(\z,z) + \frac{1}{4!}\p_{x_1}^4 u(\z)(x_1-\x_1)^4 + \frac{1}{2!}Y\p_{x_1}^2u(\z)(x_1-\x_1)^2(t-s) \\
            & + \frac{1}{2!}Y^2u(\z)(t-s)^2 + \p_{x_2}\p_{x_1}u(\z)(x_1-\x_1)(x_2-\x_2-(t-s)\x_1),
\intertext{while, according {to} formula \eqref{eq:pol_taylor_bonf}, we have}
 T_3 u(\z,z)&=T_2 u(\z,z) + \frac{1}{2!}
 (Y\p_{x_1}+\p_{x_1}Y)u(\z)(x_1-\x_1)(t-s)+ \frac{1}{3!}\p_{x_1}^3u(\z)(x_1-\x_1)^3\\
  & + \p_{x_2}u(\z)\Big(x_2-\x_2-(t-s)\x_1-\frac{(t-s)(x_1-\x_1)}{2}\Big),\\
 T_4 u(\z,z)&=T_3 u(\z,z) + \frac{1}{2!}Y^2u(\z)(t-s)^2 + \frac{1}{4!}\p_{x_1}^4 u(\z)(x_1-\x_1)^4 \\
 &+ \frac{1}{3!}(Y\p_{x_1}^2+\p_{x_1}Y\p_{x_1}+\p_{x_1}^2 Y)u(\z)(x_1-\x_1)^2(t-s) \\
  &  + \p_{x_2}\p_{x_1}u(\z)(x_1-\x_1)\Big(x_2-\x_2-(t-s)\x_1-\frac{(t-s)(x_1-\x_1)}{2}\Big).
\end{align}
Notice that the above expressions of the Taylor polynomials can be proved to be algebraically
equivalent by using the identity $\p_{x_1}Y=Y\p_{x_1} + \p_{x_2}$.

\subsection{Intrinsic H\"older spaces in the literature}\label{sec:comparison_holder}
{Intrinsic H\"older spaces play a central role in the study of the existence and the regularity
properties of solutions to Kolmogorov operators with variables coefficients. In order to prove
Schauder-type estimates, different notions of H\"older spaces have been proposed by several
authors (see, for instance, \cite{Manfredini}, \cite{Lunardi1997}, \cite{Pascucci2},
\cite{Francesco} and \cite{NyPasPol}): we note that 
some authors introduce only the definition of $C^{0,\a}_B$ and $C^{2,\a}_B$. Indeed, the
definition of $C^{1,\a}_B$ is technically more elaborate because it involves derivatives of
fractional (in the intrinsic sense) order and therefore is sometimes omitted.}

{In \cite{Manfredini} and \cite{Francesco}, $C^{0,\a}_B$ is defined as the space of functions that
are bounded and H\"older continuous with respect to the homogeneous group structure: precisely, a
function $u\in C^{0,\a}_B$ on a domain $\O$ of $\Rdd$ if
\begin{equation}\label{eq:def_ordinezeroaltri}
|u|_{\a,B,\Om}:= \sup_{z\in\Om}|u(z)| + \sup_{z,\z\in \Om \atop z\neq
\z}\frac{|u(z)-u(\z)|}{\|\z^{-1}\circ z\|^{\a}_B}<\infty.
\end{equation}
Note that by adopting this definition, the estimate for the remainder of the 0th order Taylor
polynomial trivially follows. Corollary \ref{ccaa} shows that definition
\eqref{eq:def_ordinezeroaltri} is basically equivalent to {Definition
\ref{def:C_alpha_spaces}-i)}.} Similarly, \cite{NyPasPol} define the following norm in the space
$C^{1,\a}_B$:
\begin{equation}\label{eq:def_norm_1}
 |u|_{1+\a,B,\Om}:=|u|_{\a,B,\Om} + \sum_{i=1}^{p_0} |\p_{x_i}u|_{\a,B,\Om} +
 \sup_{z,\z \in \Om \atop z\neq \z} \frac{|u(z)-T_1 u(\z,z)|}{\|\z^{-1}\circ z\|^{1+\a}_B}.
\end{equation}
Various definitions of the space $C^{2,\a}_B(\Om)$ are used in the literature. \cite{Manfredini}
requires bounded and H\"older continuous second order derivatives, while \cite{Francesco} and
\cite{NyPasPol} also require the function {$u$} and its first $p_0$ spatial derivatives to be
H\"older continuous. Precisely, \cite{Manfredini} introduces the norm
\begin{equation}\label{eq:def_norm_2_M}
|u|_{2+\a,B,\Om}^{(M)}:=\sup_{\Om}|u| + \sum_{i=1}^{p_0}\sup_{\Om}|\p_{x_i}u| +\sum_{i,j=1}^{p_0}
|\p_{x_i,x_j}u|_{\a,B,\Om} + |Yu|_{\a,B,\Om},
\end{equation}
while \cite{Francesco} and \cite{NyPasPol} define
\begin{equation}\label{eq:def_norm_2_A}
|u|_{2+\a,B,\Om}:=|u|_{\a,B,\Om} + \sum_{i=1}^{p_0} |\p_{x_i}u|_{\a,B,\Om} +\sum_{i,j=1}^{p_0}
|\p_{x_i,x_j}u|_{\a,B,\Om} + |Yu|_{\a,B,\Om}.
\end{equation}

\subsection{Examples of functions in 
$C^{n,\alpha}_{B,\text{loc}}$}\label{sec:examples} 
For comparison, we give some examples of functions with different intrinsic and Euclidean
regularity. We set $d=2$ and $B$ as in \eqref{eq:B_matrix_kolm_prot} corresponding to the
\emph{prototype} Kolmogorov operator in
\eqref{e5}. 
\begin{example}\label{ex:positive part}
Consider the function $u:\R\times \R^2\longrightarrow\R$ given by $u(t,x_1,x_2)=|x_2 -c|$, with
$c\in \R$. This function is particularly relevant for financial applications since it is {often}
related
to the payoff of 
so-called Asian-style derivatives. Clearly $u$ is Lipschitz continuous in the
Euclidean sense, but intrinsically we have 
$u\in C^{1,1}_{B,\text{loc}}(\R\times
\R^2)$ because $\partial_{x_1} u \in C^{0,1}_{B,\text{loc}}(\R\times \R^2)$
and $u\in C^{2}_{Y,\text{loc}}(\R\times \R^2)$. Note that $u\notin C^{2,\alpha}_{B,\text{loc}}(\R\times \R^2)$ because $u$ is not
$Y$-differentiable in $x_2=c$: nevertheless a \eqref{eq:estim_tay_n}-like  estimate for $n=2$ and
$\alpha=1$ holds for two points $z,\z\in\R\times \R^2$ sharing the same time-component, i.e.
\begin{equation}
  \left|u(z)- u(\z)\right|\le |x_2-\xi_2| \leq \|\z^{-1}\circ z\|_{B}^{3} , \qquad z=(t,x),\ \z=(t,\xi)\in\R\times \R^2.
\end{equation}
\end{example}

\begin{example}\label{ex:positive part_2}
As a variant of the previous example let us consider the function $u:\R\times \R^2\longrightarrow\R$ given by $u(t,x_1,x_2)=|x_2 -c|^{\frac{3}{2}}$, with
$c\in \R$.
This time $u\in C^{1,1/2}$, that is differentiable with H\"older continuous derivatives in the
Euclidean sense, but intrinsically we have 
$u\in C^{2,1}_{B,\text{loc}}(\R\times \R^2)$ because $\partial_{x_1} u\equiv 0$ 
and
\begin{equation}
Yu(t,x_1,x_2) = \frac{3}{2}x_1 \, |x_2-c|^{\frac{1}{2}}\:\mathrm{sgn}(x_2-c)\in C^{0,1}_{B,\text{loc}}.
\end{equation}
Also in the present example the function shows higher intrinsic regularity than the Euclidean one.
\end{example}

\begin{example}
It is easy to check that any function of the form $u(t,x_1,x_2)=f(x_2-t x_1)$ is constant along the integral curves $e^{\delta Y}(z)=(t+\delta,x_1,x_2+\delta x_1)$ for any $z\in \Omega$.
Therefore, we have $Y^n u \equiv 0$ for any $n\in\N$. 
In this particular case, we have that $u\in C^{n,\alpha}_{B,\text{loc}} $ if and only if $u\in
C^{n,\alpha}_{\text{loc}} $ in the Euclidean sense.
\end{example}
\begin{example}
The following function belongs to $C^{2,\alpha}_{B,\text{loc}}$ but only to
$C^{0,\alpha}_{\text{loc}}$:
\begin{equation}
u(t,x_1,x_2)=
  \begin{cases}
    \frac{1}{\sqrt{2\pi x_1^4}}\int_{\R} \exp\Big(\hspace{-3pt}-\frac{(y-x_2)^2}{2 x_1^4}\Big) |y|\, \dd y & \text{if}\ x_1\neq 0 , \\
    |x_2|  & \text{if}\ x_1 = 0.
  \end{cases}
\end{equation}
Indeed $u$ is continuous and smooth on $\{x_1\neq 0 \}$; in particular, $u\in
C^{2,1}_{\text{loc}}(\{x_1\neq 0 \})$ and $u\in C^{2,1}_{B,\text{loc}}(\{x_1\neq 0 \})$. On the
plane $\{x_1=0\}$ the Euclidean derivative $\partial_{x_2} u$ does not exist in $x_2=0$ for any
$t$ and thus $u\notin C^{2,\alpha}_{\text{loc}}$ for any $\alpha\in (0,1]$. On the other hand,
$\partial_{x_1}u$, $\partial_{x_1 x_1}u$ and $Yu$ exist on $\{x_1=0\}$ and they are all equal to
$0$. In particular, we have $\partial_{x_1 x_1}u,Yu\in C^{1}_{Y,\text{loc}}$ and
$\partial_{x_1}u\in C^{2}_{Y,\text{loc}}$. Moreover, one can directly prove that $\partial_{x_1
x_1}u,Yu\in C^{1}_{\partial_{x_1},\text{loc}}$ and thus, $u\in C^{2,1}_{B,\text{loc}}$.
\end{example}

\subsection{Asymptotic expansions for ultra-parabolic operators and application to mathematical finance.
}\label{sec:asympt_exp} We briefly discuss the possibility of exploiting our result in order to
obtain asymptotic expansions for variable-coefficients ultra-parabolic operators of the type
\begin{equation}\label{e1_bis}
  \Lc=\frac{1}{2}\sum_{j=1}^{p_{0}} a_{ij}(t,x) \p_{x_{j}x_{j}}+\sum_{i=1}^{p_0} a_i(t,x) \p_{x_i}+\langle
  Bx,\nabla\rangle+\p_{t},\qquad
  t\in\R,\ {x\in\R^{d}},
\end{equation}
where $(a_{ij}(t,x))_{1\leq i,j\leq p_0}$ is a positive definite $p_0\times p_0$ matrix for any
$(t,x)\in \R\times \R^{d}$ and $B$ {is} as in \eqref{eq:B_blocks}.  In the elliptic case, i.e.
$p_0=d$, two of the authors had previously proposed a \emph{Gaussian perturbative method} to carry
out a closed-from approximation of the {solution} of the backward Cauchy problem
\begin{equation}\label{eq:cauchy_problem}
  \begin{cases}
    \Lc u(t,x)=0,\qquad &(t,x)\in [0,T[\,\times {\R^{d}}, \\
    u(t,x)=\varphi(x),\qquad &x\in \R^{d},
  \end{cases}
\end{equation}
for a given $T>0$ and a given terminal datum $\varphi$. For a recent and thorough description of
such approach the reader can refer to \cite{LPP4} for the uniformly parabolic case and to
\cite{PP_compte_rendu} for the locally-parabolic case. Roughly speaking, under mild assumptions on
the coefficients $a_{ij}$ and $a_i$, the authors proved a small-time asymptotic expansion of the
type
\begin{equation}\label{eq:asympt_exp}
 u(t,x)= u_0 (t,x) + \sum_{n=1}^{N} \Gc_n u_0(t,x)+R_N(t,x),\qquad N\in \N, 
\end{equation}
with
\begin{equation}\label{eq:asympt_exp_bis}
 R_N(t,x)= \text{O}\left((T-t)^{\frac{m_{a,N}+m_{\phi}}{2}}\right) \qquad \text{ as }T-t\to 0^{+}.
\end{equation}
{Here $u_0$ is the solution of the \emph{heat-type} operator $\Lc_0$ obtained by freezing the
coefficients of $\Lc$ and $(\Gc_n)_{n\ge 1}$ is a family of differential operators acting on $x$,
{polynomial in $(T-t)$,} 
and dependent on the Taylor coefficients of the functions $a_{ij}$ and $a_i$. The positive exponents $m_{a,N}$ and $m_{\phi}$, determining the
asymptotic rate of convergence of the expansion, depend on the regularity of the coefficients
$a_{ij}$, $a_i$ and of the terminal datum $\varphi$ respectively. Typically, we have
$m_{a,N}={N+1}$ and $m_{\phi}=k+1$ if $a_{ij},a_{i}\in C^{N,1}$ and $\phi\in C^{k,1}$
respectively, in the classical Euclidean meaning. }

In light of the invariance properties
\eqref{eq:translation_invariance}-\eqref{eq:dilation_invariance} it seems reasonable that, when
trying to extend such results to the ultra-parabolic framework, intrinsic regularity should be
considered as opposed to classical one. Precisely, we {could} 
perform our analysis by assuming the
coefficients $a_{ij},a_{i}\in C^{N,1}_B$, the terminal datum $\phi\in C^{k,1}_B$, and make use of
the intrinsic Taylor formula of Theorem \ref{th:main} to carry out an asymptotic expansion similar
to that in \eqref{eq:asympt_exp}-\eqref{eq:asympt_exp_bis}. Thus we {could} 
obtain accurate
closed-form approximate solutions to the Cauchy problem \eqref{eq:cauchy_problem}. At the
best of our knowledge, such a general result for 
ultra-parabolic operators is not available in the literature: clearly, such an extension could be
also performed by considering Euclidean regularity for the coefficients and the terminal datum.
However, the benefit in exploiting the intrinsic regularity is twofold:
\begin{itemize}
\item[i)] first, since the operators $\Gc_n$ in \eqref{eq:asympt_exp} depend on the Taylor coefficients of the functions $a_{ij}$ and $a_i$, it is convenient to use the
intrinsic Taylor polynomial, being the latter typically a projection of the Euclidean one. In this
way we avoid taking up terms in the expansion that do not improve the quality of the
approximation;
\item[ii)] {secondly, since the bound in \eqref{eq:asympt_exp_bis} also depends on the regularity of the datum $\phi$, we can prove a higher accuracy
for the approximation \eqref{eq:asympt_exp} since the intrinsic regularity of $\phi$ is typically
greater than the Euclidean regularity: this is the case, for instance, in financial applications
(see \eqref{payoff}).}
\end{itemize}

The interest for 
seeking approximate solutions for degenerate \eqref{e1_bis}-like operators is justified by their connection with \emph{stochastic differential equations} and by their vast impact in numerous applications. As a matter of example, we could consider the problem arising in mathematical finance of pricing path-dependent options of Asian style. To fix the ideas, let us denote by $X^1$ a risky asset following the stochastic differential equation
\begin{equation*}
  \dd X^1_{t}=
  \s(X^1_{t})X^1_{t}\dd W_{t},
\end{equation*}
where $W$ is a one-dimensional standard real Brownian motion under the risk-neutral measure $\mathbb{Q}$. The averaging prices for an \emph{arithmetic Asian option} are described by the additional $\R^+$-valued state process $X^2$ satisfying
\begin{equation}
  \dd X^2_{t}=X^1_{t}\dd t.
\end{equation}
By usual arbitrage arguments and by standard Feynman-Kac representation formula, the price of a European Asian option with terminal payoff $\phi(X^1_T,X^2_T)$ is given by
\begin{equation}\label{eq:expect_asian_arithm}
A_t=\mathbb{E}[\phi(X^1_T,X^2_T)|\mathcal{F}_t]=u(t,X^1_t,X^2_t),
\end{equation}
where $u$ solves the Cauchy problem \eqref{eq:cauchy_problem} with
\begin{equation}\label{probCauchydegen}
\Lc=\frac{\sigma^{2}(x_1)x_1^{2}}{2}\,\partial_{x_1 x_1}+x_1\p_{x_2}+\p_{t}.
\end{equation}
Typical payoff functions are given by
\begin{equation}\label{payoff}
\begin{split}
 \phi_{\text{fix}}(x_1,x_2)=\left(\frac{x_2}{T}-K\right)^{+}\qquad&\text{(fixed strike arithmetic Call)},\\
 \phi_{\text{flo}}(x_1,x_2)=\left(x_1-\frac{x_2}{T}\right)^{+}\qquad&\text{(floating strike arithmetic Call)}.
\end{split}
\end{equation}
Under suitable regularity and growth conditions, existence and uniqueness of the solution to the
Cauchy problem \eqref{probCauchydegen} were proved in \cite{BarucciPolidoroVespri}.

Even in the standard Black\&Scholes model (i.e. $\sigma(\cdot)\equiv \sigma$), the arithmetic
average $X^2_t$ is not log-normally distributed and its distribution is not trivial to
analytically characterize. An integral representation was obtained by \cite{Yor1992a}. However,
the latter is not very relevant for the practical computation of the expectation in
\eqref{eq:expect_asian_arithm}. Therefore, several authors proposed diverse alternative approaches
to efficiently compute the prices of arithmetic Asian options. A short and incomplete list
includes \cite{Linetsky2004}, \cite{DewynneShaw2008}, \cite{Gobet2014475} and
\cite{FoschiPagliaraniPascucci2011}. In the latter, the authors obtained an explicit third order
approximation of the type \eqref{eq:asympt_exp} for the fundamental solution of $\Lc$ and thus for
the joint distribution
of the couple $(X^1_t,X^2_t)$, but did not prove any bound for the remainder $R_3$. {By means of the Taylor expansion of Theorem \ref{th:main}, it seems possible to derive a general
asymptotic expansion of type \eqref{eq:asympt_exp}-\eqref{eq:asympt_exp_bis} and prove rigorous
error bounds at any order $N$: we plan to pursue this research direction in a forthcoming paper.}

We would like to emphasize that, in particular, the accuracy would depend on the intrinsic
regularity of the payoff function. For instance, $\varphi_{\text{fix}}\in C^{1,1}_{B,\text{loc}}$
(see Example \ref{ex:positive part}) and the short-time asymptotic convergence would be of order
$(N+3)/2$. This is an interesting point because the same approximating technique would return a
slower asymptotic convergence, namely $(N+2)/2$, if we would only take into account the Euclidean
regularity of the payoff, i.e. $\varphi_{\text{fix}}\in C^{0,1}_{\text{loc}}$. Furthermore, it is
also interesting to observe that, if we assumed the coefficient $\sigma$ to be also dependent on
$x_2$, the partial derivatives w.r.t. $x_2$ would start appearing in the intrinsic Taylor
polynomial, and thus in the operators $\Gc_n$, only from $n=2$. Performing the expansion by means
of the classical Taylor polynomial would thus yield some additional terms that have no impact on
the asymptotic convergence.

\section{Preliminaries}\label{sec:prelim}
In this section we collect several results that are preliminary to the proof of Theorem
\ref{th:main}. We first recall that under the standing Assumption \ref{assume:horm}, the matrix
$B$ takes the form
\begin{equation}\label{eq:B_blockshom}
 B=\left(
 \begin{array}{ccccc}
 0_{p_{0}\times p_{0}}& 0_{p_{0}\times p_{1}}&\cdots& 0_{p_{0}\times p_{r-1}}& 0_{p_{0}\times p_{r}}\\
 B_1 & 0_{p_{1}\times p_{1}} &\cdots& 0_{p_{1}\times p_{r-1}} & 0_{p_{1}\times p_{r}} \\
 0_{p_{2}\times p_{0}} & B_2 &\cdots& 0_{p_{2}\times p_{r-1}}& 0_{p_{2}\times p_{r}} \\
 \vdots & \vdots &\ddots& \vdots&\vdots \\ 0_{p_{r}\times p_{0}} & 0_{p_{r}\times p_{1}} &\cdots& B_r& 0_{p_{r}\times p_{r}}
 \end{array}
 \right),
\end{equation}
where $0_{p_{i}\times p_{j}}$ is a $p_{i}\times p_{j}$ null block. We also recall notation
\eqref{e6} for $\bar{p}_{k}$ with $k=0,\dots,r$. As a direct consequence of
\eqref{eq:B_blockshom}, we have that for any $n\le r$
\begin{equation}\label{eq:B_blockshomn}
 B^{n}=\left(
 \begin{array}{ccccc}
 0_{\bar{p}_{n-1}\times p_{0}}& 0_{\bar{p}_{n-1}\times p_{1}}&\cdots& 0_{\bar{p}_{n-1}\times p_{r-n}}& 0_{\bar{p}_{n-1}\times \left(\bar{p}_{r}-\bar{p}_{r-n}\right)}\\
 \prod\limits_{j=1}^{n}B_{j} & 0_{p_{n}\times p_{1}} &\cdots& 0_{p_{n}\times p_{r-n}} & 0_{p_{n}\times \left(\bar{p}_{r}-\bar{p}_{r-n}\right)} \\
 0_{p_{n+1}\times p_{0}} & \prod\limits_{j=2}^{n+1}B_{j} &\cdots& 0_{p_{n+1}\times p_{r-n}}& 0_{p_{n+1}\times \left(\bar{p}_{r}-\bar{p}_{r-n}\right)} \\
 \vdots & \vdots &\ddots& \vdots&\vdots \\ 0_{p_{r}\times p_{0}} & 0_{p_{r}\times p_{1}} &\cdots& \prod\limits_{j=r-n+1}^{r}B_{j}& 0_{p_{r}\times \left(\bar{p}_{r}-\bar{p}_{r-n}\right)}
 \end{array}
 \right),
\end{equation}
where
  $$\prod_{j=1}^{n}B_{j}=B_{n}B_{n-1}\cdots B_{1}.$$
Moreover $B^{n}=0$ for $n>r$, so that
\begin{equation}\label{ande11}
  e^{\d B}=I_{d}+\sum_{h=1}^{r}\frac{B^{h}}{h!}\d^{h},
\end{equation}
where $I_{d}$ is the $d\times d$ identity matrix.

We also recall the notation \eqref{eq:sub_multi_index}: for any $x=(x_{1},\dots,x_{d})\in\R^{d}$
and $n=0,\dots,r$, we denote by $x^{{[n]}}\in\R^{d}$ {the projection of $x$ on $\{0
\}^{\bar{p}_{n-1}}\times \mathbb{R}^{p_n} \times \{0 \}^{d-\bar{p}_{n}}$}. Then $\R^{d}$ {can be represented as a} 
direct sum:
  $$\R^{d}=\bigoplus_{n=0}^{r} V_{n}, \qquad V_{n}:=\{x^{[n]}\mid x\in\R^{d}\},\quad n=0,\dots,r.$$
\begin{remark}\label{rem:linearmapping}
By \eqref{eq:B_blockshomn} it is clear that
\begin{equation}\label{ande1bis}
B^n v \in \bigoplus_{k=n}^{r} V_{k}, \qquad v\in \R^d,
\end{equation}
and if $v\in V_{0}$ then
\begin{equation}\label{ande1}
 B^{n}v\in V_{n},\qquad  n=0,\dots,r.
\end{equation}
More precisely, let us set
\begin{equation}
 \bar{B}_{n}=\left(
 \begin{array}{cc}
 0_{\bar{p}_{n-1}\times p_{0}}& 0_{\bar{p}_{n-1}\times (r-p_{0})}\\
 \prod\limits_{j=1}^{n}B_{j} & 0_{p_{n}\times (r-p_{0})}  \\
 0_{(\bar{p}_{r}-\bar{p}_{n})\times p_{0}} & 0_{(\bar{p}_{r}-\bar{p}_{n})\times (r-p_{0})}
 \end{array}
 \right),
 \end{equation}
where the $p_n\times p_0$ matrix $\prod\limits_{j=1}^{n}B_{j} $ has full rank. Then we have
 $$B^{n}v=\bar{B}_n v,{\qquad v\in V_0,}$$
and the linear application $\bar{B}_{n}:V_0\to V_n$ is surjective but, in general, not injective.
For this reason, for any $n=1,\cdots,r$, we define the subspaces $V_{0,n}\subseteq V_0$ as
 \begin{equation}
 V_{0,n}=\{ x\in V_0 | x_{j}=0 \ \forall  j\notin \Pi_{B,n}  \},
 \end{equation}
with $ \Pi_{B,n}$ being the set of the indexes corresponding to the first $p_n$ linear independent columns of $\prod\limits_{j=1}^{n}B_{j}$. 
It is now trivial that the linear map $$\bar{B}_{n}:V_{0,n}\to V_n$$ is also injective. 
Notice that
\begin{equation}\label{eq:inclusion_V_0_n}
V_{0,r}\subseteq V_{0,r-1} \subseteq \cdots \subseteq V_{0,1}\subseteq V_{0,0}:= V_0.
\end{equation}
\end{remark}

\subsection{Commutators and integral paths}
In this section we construct approximations of the integral paths of the commutators of the vector
fields $X_{1},\dots,X_{p_{0}}$ and $Y$ in \eqref{e3}. In the sequel we shall use the following
notations: for any $v\in \Rd$ we set
 $$Y_{v}^{(0)}=\sum_{i=1}^{d}v_{i}\p_{x_{i}}.$$
Hereafter we will always consider $v\in V_{0}$. {In such way 
$Y_{v}^{(0)}$ will be actually} 
a linear
combination of $X_{1},\dots,X_{p_{0}}$. Moreover we define recursively
\begin{equation}\label{ande7b}
   Y_{v}^{(n)}=[Y_{v}^{(n-1)},Y]=Y_{v}^{(n-1)}Y-Y Y_{v}^{(n-1)},\qquad n\in\N.
\end{equation}

\begin{remark}
By induction it is straightforward to show that for any $u\in C^{\infty}(\Rdd)$, we have
\begin{equation}\label{ande7}
   Y_{v}^{(n)}u = \langle B^{n}v,\nabla u\rangle, \qquad n\in \N,
\end{equation}
with $B^{n}v\in V_{n}$ by \eqref{ande1}.
\end{remark}

{When applied to functions in $C^{n,\a}_{B,\text{\rm loc}}$, operator $Y_{v}^{(n)}$ can be
interpreted as a composition of Lie derivatives. Indeed we have the following.
\begin{lemma}\label{lemm:commutators}
Let $n\in \N$ and $u\in C^{n,\alpha}_{B,\textrm{loc}}$. Then, for any $v\in V_0$ and $k\in\N\cup
\{0\}$ with $2k+1\leq n$, we have $Y^{(k)}_v u\in C^{n-2k -1,\alpha}_{B,\textrm{loc}}$.
\end{lemma}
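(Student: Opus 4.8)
The plan is to argue by induction on $k$, reading in parallel the recursion \eqref{ande7b} that defines $Y^{(k)}_v$ and the recursive Definition \ref{def:C_alpha_spaces} of the intrinsic H\"older spaces. First I would fix the meaning of $Y^{(k)}_v u$ when $u$ is merely of class $C^{n,\a}_{B,\text{\rm loc}}$ with $2k+1\le n$: set $Y^{(0)}_v u:=\sum_{i=1}^{p_0}v_i\,\p_{x_i}u$, which is a genuine linear combination of $X_i$-Lie derivatives because $v\in V_0$, and then put $Y^{(k)}_v u:=Y^{(k-1)}_v(Yu)-Y\big(Y^{(k-1)}_v u\big)$, every derivative being understood as a Lie derivative in the sense of Definition \ref{def:intrinsic_alpha_Holder3}. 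The statement I would prove, by induction on $k\ge0$, is: \emph{for every $n\ge 2k+1$, every domain $\O$, every $u\in C^{n,\a}_{B,\text{\rm loc}}(\O)$ and every $v\in V_0$, the quantity $Y^{(k)}_v u$ is well defined and belongs to $C^{n-2k-1,\a}_{B,\text{\rm loc}}(\O)$.} When $u$ is smooth this object coincides with $\langle B^k v,\nabla u\rangle$ by \eqref{ande7b} and \eqref{ande7}, which is precisely the announced interpretation of $Y^{(k)}_v$ as a composition of Lie derivatives.

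For $k=0$ one only needs $n\ge1$. By Definition \ref{def:C_alpha_spaces}-ii)--iii), $u\in C^{n,\a}_{B,\text{\rm loc}}(\O)$ forces $\p_{x_i}u\in C^{n-1,\a}_{B,\text{\rm loc}}(\O)$ for $i=1,\dots,p_0$; and since the spaces $C^{m,\a}_{B,\text{\rm loc}}$ are vector spaces (their defining semi-norms are subadditive and positively homogeneous, as one checks by induction on $m$), the linear combination $Y^{(0)}_v u$ lies in $C^{n-1,\a}_{B,\text{\rm loc}}(\O)=C^{n-2\cdot0-1,\a}_{B,\text{\rm loc}}(\O)$.

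For the inductive step fix $k\ge1$, assume the statement at level $k-1$ for all admissible values of $n$, and take $u\in C^{n,\a}_{B,\text{\rm loc}}(\O)$ with $n\ge2k+1$. Since $n\ge2$, Definition \ref{def:C_alpha_spaces}-iii) gives $Yu\in C^{n-2,\a}_{B,\text{\rm loc}}(\O)$, and because $n-2\ge2(k-1)+1$ the inductive hypothesis applied to $Yu$ yields $Y^{(k-1)}_v(Yu)\in C^{(n-2)-2(k-1)-1,\a}_{B,\text{\rm loc}}(\O)=C^{n-2k-1,\a}_{B,\text{\rm loc}}(\O)$. On the other hand, the inductive hypothesis applied to $u$ itself gives $Y^{(k-1)}_v u\in C^{n-2k+1,\a}_{B,\text{\rm loc}}(\O)$, and since $n-2k+1\ge2$ a further application of Definition \ref{def:C_alpha_spaces}-iii) produces $Y\big(Y^{(k-1)}_v u\big)\in C^{n-2k-1,\a}_{B,\text{\rm loc}}(\O)$. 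Therefore both summands, and hence $Y^{(k)}_v u=Y^{(k-1)}_v(Yu)-Y\big(Y^{(k-1)}_v u\big)$, belong to $C^{n-2k-1,\a}_{B,\text{\rm loc}}(\O)$, which closes the induction and proves the lemma.

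The one point deserving care is well-posedness at each stage of the recursion, namely that every Lie derivative occurring in the expression for $Y^{(k)}_v u$ actually exists. This is exactly what the inductive hypothesis supplies: it delivers the intermediate function $Y^{(k-1)}_v u$ in a space regular enough --- $C^{n-2k+1,\a}_{B,\text{\rm loc}}$ with $n-2k+1\ge2$ --- to admit a $Y$-Lie derivative, and symmetrically $Yu$ is regular enough for $Y^{(k-1)}_v$ to act upon. In other words, the recursive gradation of the operators $Y^{(k)}_v$ and that of the spaces $C^{m,\a}_B$ dovetail exactly; no ambiguity arises from the order of the commutators, since it is fixed once and for all by \eqref{ande7b}, and everything else is index bookkeeping.
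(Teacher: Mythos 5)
Your proof is correct and follows essentially the same route as the paper's: both rest on the recursion $Y^{(k)}_v u=Y^{(k-1)}_v(Yu)-Y\big(Y^{(k-1)}_v u\big)$, handle the first term by applying the inductive hypothesis to $Yu\in C^{n-2,\a}_{B,\text{\rm loc}}$ and the second by applying $Y$ to $Y^{(k-1)}_v u\in C^{n-2k+1,\a}_{B,\text{\rm loc}}$, with the $k=0$ case being immediate from Definition \ref{def:C_alpha_spaces}. The only difference is organizational --- you run a single induction on $k$ with $n$ quantified universally, whereas the paper phrases it as a nested induction on $n$ and $k$ --- and your explicit attention to well-posedness of each Lie derivative in the recursion is a welcome clarification rather than a deviation.
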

\begin{proof}
If $k=0$, the thesis is obvious since, by assumption, $\partial_{x_i}u\in
C^{n-1,\alpha}_{B,\text{\rm loc}}$ for $i=1,\dots,p_{0}$. To prove the general case we proceed by
induction on $n$. If $n\le 2$ there is nothing to prove because we only have to consider the case
$k=0$. Fix now ${n}\geq 2$. We assume the thesis to hold for any $m\leq {n}$ and prove it true for
${n}+1$. We proceed by induction on $k$. We have already shown the case $k=0$.
Thus, we assume the statement to hold for $k\in\N\cup \{0\}$ with $2(k+1)+1\leq n+1 $ 
and we prove it true for $k+1$. Note that, by definition \eqref{ande7b} we clearly have
\begin{equation}
Y_{v}^{(k+1)}u=Y_{v}^{(k)}Yu-Y\,Y_{v}^{(k)}u,
\end{equation}
with $v\in V_0$. Then the thesis follows by inductive hypothesis and since, by definition of
intrinsic H\"older space, $Yu\in C^{{n}-1,\alpha}_{B,\textrm{loc}}$.
\end{proof}

Next we show how to approximate the integral curves of the commutators $Y_{v}^{(k)}$ by  using a
rather classical technique from control theory. For any $n\in\{0,\dots,r\}$, $z=(t,x)\in\Rdd$,
$\d\in\R$ and $v\in V_{0}$, we define iteratively the family of trajectories
$\left(\g_{v,\d}^{(n,k)}(z)\right)_{k=n,\dots,r}$ as
\begin{align}\label{ande2_bis}
  \g_{v,\d}^{(n,n)}(z) & = e^{\d^{2 n+1} Y^{(n)}_{v}}(z)=\big(t,x+\d^{2 n+1} B^n v\big),\\
  \g_{v,\d}^{(n,k+1)}(z) & =
  e^{-\d^{2}Y}\left(\g_{v,-\d}^{(n,k)}\left(e^{\d^{2}Y}\left(\g_{v,\d}^{(n,k)}(z)\right)\right)\right),\qquad n\leq k\leq r-1.\label{eq:gamma_k1_bis}
\end{align}
We also set
\begin{equation}\label{eq:def_gamma_minus1_k}
\g_{v,\d}^{(-1,k)}(z)= \g_{v,\d}^{(0,k)}(z),\qquad 
0\leq k\leq r.
\end{equation}

\begin{lemma}\label{andl1_bis}
For any $n\in\{0,\cdots,r\}$, $(t,x)\in\Rdd$, $\d\in\R$ and $v\in V_{0}$ we have
\begin{equation}\label{ande3}
  \g_{v,\d}^{(n,k)}(t,x)=\left(t, x+S_{n,k}(\d)v\right),\qquad k=n,\dots,r,
\end{equation}
where
\begin{equation}\label{ande3_bis}
 S_{n,n}(\d)=\d^{2n+1} B^n v\quad \text{ and }\quad
 S_{n,k}(\d)=(-1)^{k-n}\d^{2n+1} B^n \sum\limits_{h\in\N^{k-n}\atop |h|\le  r}\frac{(-B)^{|h|}}{h!}\d^{2|h|},\qquad
 k=n+1,\dots,r,
\end{equation}
with $|h|=h_{1}+\cdots+h_{k}$. 
\end{lemma}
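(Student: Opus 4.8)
The plan is to fix $n\in\{0,\dots,r\}$ and prove \eqref{ande3}--\eqref{ande3_bis} by induction on $k$, running from $k=n$ up to $k=r$. The base case $k=n$ is immediate from the definition \eqref{ande2_bis}: by \eqref{ande7} the operator $Y^{(n)}_{v}$ coincides with the constant-coefficient vector field $\langle B^{n}v,\nabla\rangle$, whose integral curve through $(t,x)$ is $s\mapsto(t,x+sB^{n}v)$; evaluating it at $s=\d^{2n+1}$ gives $\g^{(n,n)}_{v,\d}(t,x)=(t,x+\d^{2n+1}B^{n}v)$, i.e.\ $S_{n,n}(\d)=\d^{2n+1}B^{n}$, which is the $k=n$ instance of \eqref{ande3_bis} (the sum over $h\in\N^{0}$ reducing to the single empty term).

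For the inductive step, suppose \eqref{ande3}--\eqref{ande3_bis} hold at level $k$ with $n\le k\le r-1$. I would simply unwind the four maps composed in \eqref{eq:gamma_k1_bis}, using the explicit flow $e^{\d^{2}Y}(t,x)=(t+\d^{2},e^{\d^{2}B}x)$ from \eqref{eq:def_curva_integrale_campo} together with the inductive formula $\g^{(n,k)}_{v,\pm\d}(t',x')=(t',x'+S_{n,k}(\pm\d)v)$. Starting from $(t,x)$ one passes successively to $(t,\,x+S_{n,k}(\d)v)$, then to $(t+\d^{2},\,e^{\d^{2}B}x+e^{\d^{2}B}S_{n,k}(\d)v)$, then to $(t+\d^{2},\,e^{\d^{2}B}x+e^{\d^{2}B}S_{n,k}(\d)v+S_{n,k}(-\d)v)$, and finally, after applying $e^{-\d^{2}Y}$, the time increment cancels and one is left with $\g^{(n,k+1)}_{v,\d}(t,x)=(t,\,x+S_{n,k}(\d)v+e^{-\d^{2}B}S_{n,k}(-\d)v)$. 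Hence the trajectory again has the form \eqref{ande3}, with $S_{n,k+1}(\d)=S_{n,k}(\d)+e^{-\d^{2}B}S_{n,k}(-\d)$. Since the inductive hypothesis makes $S_{n,k}(\d)$ an odd polynomial in $\d$ (all exponents equal $2n+1+2|h|$), one has $S_{n,k}(-\d)=-S_{n,k}(\d)$, and the recursion collapses to $S_{n,k+1}(\d)=(I_{d}-e^{-\d^{2}B})\,S_{n,k}(\d)$.

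The last step is to check that this recursion reproduces the closed form \eqref{ande3_bis} with $k$ replaced by $k+1$. From \eqref{ande11} with $\d$ replaced by $-\d^{2}$ one gets $e^{-\d^{2}B}=\sum_{m=0}^{r}\frac{(-B)^{m}}{m!}\d^{2m}$, so $I_{d}-e^{-\d^{2}B}=-\sum_{m\ge 1}\frac{(-B)^{m}}{m!}\d^{2m}$. Multiplying this series by the inductive expression for $S_{n,k}(\d)$ and absorbing the new index $m\ge 1$ together with $h\in\N^{k-n}$ into a single multi-index $h'=(m,h)\in\N^{(k+1)-n}$ — for which $|h'|=m+|h|$ and $h'!=m!\,h!$ — produces exactly $S_{n,k+1}(\d)=(-1)^{(k+1)-n}\d^{2n+1}B^{n}\sum_{h'\in\N^{(k+1)-n}}\frac{(-B)^{|h'|}}{h'!}\d^{2|h'|}$; truncating the sum at $|h'|\le r$ is harmless because $B^{m}=0$ for $m>r$ (see \eqref{ande11}). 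This closes the induction.

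The argument is essentially a bookkeeping exercise and I do not expect a genuine obstacle. The two points that need care are the sign tracking in the composition $e^{-\d^{2}Y}\circ\g^{(n,k)}_{v,-\d}\circ e^{\d^{2}Y}\circ\g^{(n,k)}_{v,\d}$ — it is precisely the mismatch between the $+\d$ and $-\d$ steps that both cancels the time increment and yields the factor $I_{d}-e^{-\d^{2}B}$ — and the combinatorial reindexing $h\mapsto(m,h)$ that turns the one-term recursion into the stated explicit polynomial.
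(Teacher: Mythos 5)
Your proof is correct and follows essentially the same route as the paper: induction on $k$, unwinding the four maps in \eqref{eq:gamma_k1_bis} with the explicit flows, using the oddness of $S_{n,k}$ in $\d$ to cancel the time increment and obtain the recursion $S_{n,k+1}(\d)=(I_d-e^{-\d^2 B})S_{n,k}(\d)$, and then expanding $e^{-\d^2 B}$ via \eqref{ande11}. The only difference is that you spell out the multi-index reindexing $h\mapsto(m,h)$ verifying the closed form, which the paper leaves implicit.
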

\begin{proof}
Fix $n=0$ and proceed by induction on $k$. The case $k=n$ is trivial.
Now, assuming \eqref{ande3}-\eqref{ande3_bis} as inductive hypothesis and noting that $S_{k}(-\d)=- S_{k}(\d)$, we
have
\begin{align}\label{ande15}
   \g_{v,\d}^{(k+1)}(t,x) & =  e^{-\d^{2}Y}\Big(\g_{v,-\d}^{(n,k)}\Big(e^{\d^{2}Y}\Big(\g_{v,\d}^{(n,k)}(t,x)\Big)\Big)\Big)
  =  e^{-\d^{2}Y}\Big(\g_{v,-\d}^{(n,k)}\Big(e^{\d^{2}Y}\Big(t,x+S_{n,k}(\d)v\Big)\Big)\Big)\\
   & =  e^{-\d^{2}Y}\Big(\g_{v,-\d}^{(n,k)}\Big(t+\d^{2},e^{\d^{2}B}\Big(x+S_{n,k}(\d)v\Big)\Big)\Big)
  =  e^{-\d^{2}Y}\Big(t+\d^{2},e^{\d^{2}B}\big(x+S_{n,k}(\d)v\big)-S_{n,k}(\d)v\Big)\hspace{-10pt}\\
   & =  \Big(t,e^{-\d^{2}B}\Big(e^{\d^{2}B}\big(x+S_{n,k}(\d)v\big)-S_{n,k}(\d)v\Big)\Big)
   =  \Big(t,x+S_{n,k}(\d)v-e^{-\d^{2}B}S_{n,k}(\d)v\Big).
\end{align}
On the other hand, by \eqref{ande11} we have
\begin{align}
 x+S_{n,k}(\d)v-e^{-\d^{2}B}S_{n,k}(\d)v &=x+S_{n,k}(\d)v-\bigg(I_{d}+\sum_{j=1}^{r}\frac{(-B)^{j}}{j!}\d^{2j}\bigg)S_{n,k}(\d)v \\
& = x-\bigg(\sum_{j=1}^{r}\frac{(-B)^{j}}{j!}\d^{2j}\bigg)S_{n,k}(\d)v  = x+S_{n,k+1}(\d)v,
 \end{align}
and this concludes the proof.
\end{proof}
\begin{remark}\label{rem:rem_ste4}
Note that
\begin{equation}\label{ande4}
  S_{n,k}(\d)=\d^{2k+1}B^{k}+\tilde{S}_{n,k}(\d),\qquad n\leq k\leq r,
\end{equation}
with
  $$\tilde{S}_{n,n}(\d):=0 \quad \text{ and }\quad
  \tilde{S}_{n,k}(\d):=(-1)^{k-n}\d^{2n+1} B^n\sum\limits_{h\in\N^{k-n}\atop k-n<|h|\le
  r}\frac{(-B)^{|h|}}{h!}\d^{2|h|},\qquad k=n+1,\dots,r.$$
Then we deduce from \eqref{ande3} that
 \begin{equation}\label{eq:gamma_k}
 \g_{v,\d}^{(n,k)}(z)=\big(t,x+\d^{2 k+1} B^{k} v\big)+\big(0,\tilde{S}_{n,k}(\d)v\big),\qquad n\leq k.
 \end{equation}
It is important to remark that $ \tilde{S}_{n,r}(\d)=0$ and, by \eqref{ande1bis}, we have
\begin{equation}\label{ande6}
 \tilde{S}_{n,k}(\d)v\in \bigoplus_{j=k+1}^{r}V_{j},\qquad k=n,\dots,r;
\end{equation}
since $v\in V_{0}$, then by \eqref{ande1} we have 
\begin{equation}
\g_{v,\d}^{(n,n)}(z)=(t,x)+(0,\d^{2 n+1} B^{n} v), \quad\text{with } B^{n} v\in V_n.
\end{equation}
Thus, by using notation \eqref{eq:sub_multi_index}, for any $k=n,\dots,r$ we have
\begin{equation}\label{ande5}
  \left|\big(\tilde{S}_{n,k}(\d)v\big)^{[j]}\right|\le c_{B}|\d|^{2j+1}|v|,\qquad j=k+1,\dots,r,
  \quad \delta \in \R,
\end{equation}
where the constant {\it $c_{B}$ depends only on the matrix $B$}.
If $|v|=1$, \eqref{ande5} also implies
\begin{align}
\hspace{-30pt}\big\|\big(\g_{v,\d}^{(n,k)}(z)\big)^{-1}\circ z\big\|_{B}&=\big\| z^{-1}\circ \g_{v,\d}^{(n,k)}(z) \big\|_{B}=\norm{\big(  (t,x+\delta^{2k+1} B^k v)+(0,\tilde{S}_{n,k}(\d)v) \big)^{-1}\circ (t,x)}_{B}\\
\\ & =\norm{\big(  0,-\delta^{2k+1} B^k v - \tilde{S}_{n,k}(\d)v \big)}_{B}
= |-\delta^{2k+1} B^k v - \tilde{S}_{n,k}(\d)v|_B  \le  c_B |\delta|  .\label{normstep}
\end{align}
\end{remark}
Next we show how to connect two points in $\Rdd$ that only differ w.r.t. the spatial components {by
only moving along the the integral curves $\gamma^{(n,k)}$ previously defined.
\begin{lemma}\label{lem:connecting_curves}
Let $n\in \{0,\cdots,r \}$, $\z=(t,\xi)\in\R\times\Rd$, $y\in\bigoplus\limits_{k=n}^{r}V_{k}$ and the points $\z_{k}=(t,\xi_{k})$, for $k=n-1,\cdots,r$, defined as 
\begin{equation}\label{eq:ste24}
\z_{n-1}:=\z,\qquad\qquad \z_{k}:=\g_{v_{k},\d_{k}}^{(n-1,k)}(\z_{k-1}),\quad   v_{k}=\frac{w_k}{\left| w_k\right|},\quad \ \d_{k}=\big| w_k \big|,\quad k\geq n,
\end{equation}
where $w_k$ is the only vector in $V_{0,k}\subseteq V_{0}$ such that
$B^{k}w_{k}=y^{[k]}+\xi^{[k]}-\xi_{k-1}^{[k]}$. Then:
\begin{enumerate}
\item[i)] for any $k\in\{n,\cdots, r \}$ we have:
\begin{equation}\label{eq:ste25}
\d_{k}\le c_{B}|y|_{B},\qquad \qquad \xi_{k}^{[j]} =\xi^{[j]}+ y^{[j]},\quad j=0,\dots,k. 
\end{equation}
Note that, in particular, $\z_{r}=\z+(0,y)$;
\item[ii)] there exists a positive constant \emph{$c_B$, only dependent on the matrix $B$}, such that
\begin{equation}\label{eq:ste34}
\norm{\z_{k}^{-1}\circ \z}_B \leq c_B |y|_B,
\end{equation}
for any $k=n,\cdots, r$ and $0\leq \delta \leq \delta_k$.
\end{enumerate}
\end{lemma}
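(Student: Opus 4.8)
The plan is to establish both items by a single induction on $k$ running from $n$ to $r$, resting on two facts already available. First, since $v_k\in V_0$, Lemma \ref{andl1_bis} together with \eqref{eq:gamma_k}, \eqref{ande6} and \eqref{ande5} (used with $n-1$ in place of $n$; for $n=0$ this means $\gamma^{(-1,k)}_{v,\delta}=\gamma^{(0,k)}_{v,\delta}$ by \eqref{eq:def_gamma_minus1_k}) shows that $z\mapsto\gamma^{(n-1,k)}_{v_k,\delta}(z)$ shifts the spatial variable by an increment whose $V_k$-component is precisely $\delta^{2k+1}B^kv_k\in V_k$ and whose remaining part $\tilde{S}_{n-1,k}(\delta)v_k$ lies in $\bigoplus_{j=k+1}^rV_j$ with $|(\tilde{S}_{n-1,k}(\delta)v_k)^{[j]}|\le c_B|\delta|^{2j+1}$ for $k<j\le r$ (recall $|v_k|=1$). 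Second, by Remark \ref{rem:linearmapping} the restriction $\bar{B}_k\colon V_{0,k}\to V_k$ is a linear isomorphism whose inverse has norm depending only on $B$, so the vector $w_k$ in \eqref{eq:ste24} is well defined — the construction is of course recursive, $w_k$ depending on $\z_{k-1}$ and hence on $w_n,\dots,w_{k-1}$ — and $|w_k|\le c_B|B^kw_k|=c_B|y^{[k]}+\xi^{[k]}-\xi_{k-1}^{[k]}|$. I would begin by recording the guiding identity $\delta_k^{2k+1}B^kv_k=B^kw_k=y^{[k]}+\xi^{[k]}-\xi_{k-1}^{[k]}$.

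To prove i), fix $k\in\{n,\dots,r\}$ and assume the statement for $k-1$, with the convention that the case $k-1=n-1$ is trivial (here $\z_{n-1}=\z$ and $y^{[j]}=0$ for $j<n$). Since the $k$-th spatial increment $\xi_k-\xi_{k-1}$ lies in $\bigoplus_{j=k}^rV_j$, we get $\xi_k^{[j]}=\xi_{k-1}^{[j]}=\xi^{[j]}+y^{[j]}$ for $j<k$; for $j=k$ only the $V_k$-part contributes, and the guiding identity gives $\xi_k^{[k]}=\xi_{k-1}^{[k]}+\delta_k^{2k+1}B^kv_k=\xi^{[k]}+y^{[k]}$. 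This is the second relation in \eqref{eq:ste25}, and at $k=r$ summing over $j$ yields $\z_r=\z+(0,y)$. For the bound $\delta_k\le c_B|y|_B$: by the first paragraph and subadditivity of $t\mapsto t^{1/(2k+1)}$ we have $\delta_k=|w_k|^{1/(2k+1)}\le c_B\big(|y^{[k]}|^{1/(2k+1)}+|\xi^{[k]}-\xi_{k-1}^{[k]}|^{1/(2k+1)}\big)$; the first summand is $\le|y^{[k]}|_B\le|y|_B$, whereas $\xi_{k-1}^{[k]}-\xi^{[k]}$ equals the finite sum of the terms $(\tilde{S}_{n-1,k'}(\delta_{k'})v_{k'})^{[k]}$ over $n\le k'\le k-1$ — the $V_{k'}$-parts of the earlier increments do not reach $V_k$ since $k'<k$ — which has size $\le c_B\sum_{k'=n}^{k-1}\delta_{k'}^{2k+1}$, so its $(2k+1)$-th root is $\le c_B\sum_{k'=n}^{k-1}\delta_{k'}\le c_B|y|_B$ by the inductive hypothesis.

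To prove ii), observe that $\z_k$ and $\z$ share the time component, hence $\z_k^{-1}\circ\z=(0,\xi-\xi_k)$ and $\norm{\z_k^{-1}\circ\z}_B=|\xi-\xi_k|_B=\sum_{j=0}^r|(\xi-\xi_k)^{[j]}|_B$. For $j\le k$, item i) gives $(\xi-\xi_k)^{[j]}=-y^{[j]}$, so $|(\xi-\xi_k)^{[j]}|_B=|y^{[j]}|_B\le|y|_B$; for $j>k$, $(\xi-\xi_k)^{[j]}$ is, up to sign, the finite sum $\sum_{k'=n}^k(\tilde{S}_{n-1,k'}(\delta_{k'})v_{k'})^{[j]}$, whose block $B$-norm is $\le c_B\sum_{k'=n}^k\delta_{k'}\le c_B|y|_B$ by i); summing the $r+1$ blocks gives \eqref{eq:ste34}. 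The version with $0\le\delta\le\delta_k$ — i.e. for the running point $\gamma^{(n-1,k)}_{v_k,\delta}(\z_{k-1})$, and likewise for the auxiliary points produced by the inner recursion \eqref{eq:gamma_k1_bis} — follows from the identical estimates with $\delta$ in place of $\delta_k$, using $\delta\le\delta_k\le c_B|y|_B$.

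The main obstacle is precisely the bound $\delta_k\le c_B|y|_B$ inside the induction for i): one must ensure that the error blocks $\xi_{k-1}^{[k]}-\xi^{[k]}$, accumulated over the previous steps of the recursive construction, remain \emph{linearly} controlled by $|y|_B$ rather than growing polynomially. This is where the homogeneous structure is genuinely used — through the $(2j+1)$-th roots in $|\cdot|_B$, the subadditivity $(a+b)^{1/m}\le a^{1/m}+b^{1/m}$, and the matching $\delta^{2j+1}$-scaling of the $V_j$-component of the correction $\tilde{S}_{n-1,k}(\delta)$ — so the delicate part is the bookkeeping of which superscript and which block every term carries; once that is arranged, all remaining estimates are finite sums with constants absorbed into $c_B$.
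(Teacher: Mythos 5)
Your proof is correct and follows essentially the same route as the paper's: induction on $k$, with $\delta_k$ controlled via the injectivity of $\bar{B}_k$ on $V_{0,k}$ (the paper's \eqref{eq:ste26}), the bound \eqref{ande5} on the corrections $\tilde{S}_{n-1,k'}(\delta_{k'})v_{k'}$ accumulated in the blocks $V_j$ with $j>k'$, and the subadditivity of the $(2j+1)$-th roots; for ii) you estimate $|\xi-\xi_k|_B$ block by block where the paper applies the quasi-triangle inequality together with \eqref{normstep}, which amounts to the same computation. Note that you read $\delta_k$ as $|w_k|^{1/(2k+1)}$ rather than the literal $|w_k|$ of the statement: this is indeed the consistent normalization — it is what makes $\delta_k^{2k+1}B^k v_k=B^k w_k$, hence $\xi_k^{[k]}=\xi^{[k]}+y^{[k]}$, hold, and it is what the paper's own estimate \eqref{eq:ste26} presupposes — so you have correctly resolved an apparent typo rather than introduced a gap.
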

\begin{proof}{We first prove i).}
The second identity in \eqref{eq:ste25} easily stems from \eqref{ande6} and by definition of $v_{k}$ and $\d_{k}$. We then focus on the first one.
By Remark \ref{rem:linearmapping}, it is easy to prove that
\begin{equation}\label{eq:ste26}
  \d_{k}\le
  c_{B}\big| \xi^{[k]}+y^{[k]}-\xi_{k-1}^{[k]}\big|^{\frac{1}{2k+1}}.
\end{equation}
Moreover, by \eqref{ande5} 
we get
\begin{align}\label{eq:ste27}
  \big|\xi_{k}^{[j]}-\xi_{k-1}^{[j]}| \le c_{B}\big|\d_k|^{2j+1}
  ,\qquad
  j=k+1,\dots,r.
\end{align}
We proceed by induction on $k$. For $k=n$ the thesis immediately follows by \eqref{eq:ste26}. We now fix $n \leq k\leq r-1$ and assume the estimate to hold for any $n\leq h \leq k$. By \eqref{eq:ste26} we have 
\begin{align}
\hspace{0pt} \d_{k+1}&\le c_{B}\big|\xi^{[k+1]}+y^{[k+1]}-\xi_{k}^{[k+1]}\big|^{\frac{1}{2(k+1)+1}}
 \le c_{B}\big|y^{[k+1]}\big|^{\frac{1}{2(k+1)+1}}+c_{B}\sum_{h=n}^{k}\big| \xi^{[k+1]}_{h}-\xi_{h-1}^{[k+1]} \big|^{\frac{1}{2(k+1)+1}}\hspace{0pt}
\intertext{(by \eqref{eq:ste27})}
 &\le c_{B}\big|y^{[k+1]}\big|^{\frac{1}{2(k+1)+1}}+c_{B}\sum_{h=n}^{k}\d_{h},
\end{align}
and the thesis for $k+1$ follows by inductive hypothesis.

We now prove ii). {As first step we prove that
\begin{equation}
 \norm{(\g_{v_k,\d}^{(n-1,k)}(\z_{k-1})\big)^{-1}\circ \z_{k-1}}_B\leq c_B |y|_B.
\end{equation}
By equations \eqref{ande3}, \eqref{normstep} and \eqref{eq:ste25} we get
\begin{align}
 \norm{(\g_{v_k,\d}^{(n-1,k)}(\z_{k-1})\big)^{-1}\circ \z_{k-1}}_B &=
 \norm{(t,\x_{k-1}+S_{n-1,k}(\d)v_k)^{-1}\circ (t,\x_{k-1})}_B\\
 &=\norm{(0,\x_{k-1}-(\x_{k-1} +S_{n-1,k}(\d)v_k))}_B \\
 &= \norm{(0,-S_{n-1,k}(\d)v_k)}_B \leq 
 c_B \d_k \leq c_B |y|_B.
\end{align}
This estimate along with equations \eqref{normstep} and  \eqref{eq:ste25} allow us to conclude.
Precisely, applying the quasi-triangular inequality we get
\begin{align}
 \norm{\z_{k}^{-1}\circ \z}_B &\leq c_B\sum_{i=n}^{k} \norm{\z_{i}^{-1}\circ \z_{i-1}}_B 
 \leq c_B |y|_B.
\end{align}
}
\end{proof}
}
{We conclude the section with the following remark that allows to control the homogeneous distance
between two points along the same integral curve of $Y$.
\begin{remark}
By \eqref{eq:def_curva_integrale_campo} and \eqref{eq:translation} we have
\begin{equation}\label{eq:ste12}
\norm{z^{-1}\circ e^{\delta Y} (z)}_{B}=\norm{(e^{\delta Y} (z))^{-1} \circ z }_{B} =
|\delta|^{\frac{1}{2}},\qquad z\in \R\times\R^d,\quad \delta\in\R,
\end{equation}
\end{remark}
}

\section{Proof of Theorem \ref{th:main}}\label{sec:proof}

Theorem \ref{th:main} will be proved by induction on $n$, through the following steps:
\begin{itemize}
\item {\bf Step 1}: Proof for $n=0$;
\item {\bf Step 2}: Induction from $2n$ to $2n+1$ for any $0\leq n\leq r$;
\item {\bf Step 3}: Induction from $2n+1$ to $2(n+1)$ for any $0\leq n\leq r-1$;
\item {\bf Step 4}: Induction from $n$ to $n+1$ for any $n\geq 2r+1$.
\end{itemize}
A brief explanation is needed: the proof of Theorem \ref{th:main} cannot be carried out by a
simple induction on $n$, due to the qualitative differences in the Taylor polynomials of different
orders. For instance, one could suppose the theorem to hold for $n=2$ and consider a function
$u\in C^{3,\a}_{B,\text{\rm loc}}$.  By the inclusion property
\begin{equation}
C^{3,\a}_{B,\text{\rm loc}}\subseteq C^{2,\a}_{B,\text{\rm loc}},
\end{equation}
all the derivatives of second $B$-order do exist, i.e.
\begin{equation}
Y^k\p_x^{\b}u \in C^{2-2k-|\b|_B,\a}_{B,\text{\rm loc}}, \qquad 2k+|\b|_B\leq 2.
\end{equation}
However, $T_{3}u$ also contains the derivatives of $B$-order equal to $3$. These are exactly
\begin{equation}
 \p_{x_i,x_j,x_k}u, \quad Y\p_{x_i}u \qquad 1\leq i,j,k\leq p_0,
 \end{equation}
whose existence is granted by definition of $C^{3,\a}_{B,\text{\rm loc}}$, and the Euclidean
derivatives $$ \p_{x_l}u, \qquad p_0 <l \leq \bar{p}_1, $$ whose existence must be proved, as it
is not trivially implied by definition of $C^{3,\a}_{B,\text{\rm loc}}$. In general, such problem
arises every time when defining the Taylor expansion of order $2n+1$, $n=1,\dots,r$, i.e. when the
Euclidean derivatives w.r.t. the variables of level $n$ appear for the first time in the Taylor
polynomial. This motivates the need to treat the inductive step from $2n$ to $2n +1$ in a separate
way and therefore the necessity for Step 2 and Step 3 in the proof. Eventually, Step 4 is
justified by the fact that, when $n\geq 2r+1$, the existence of the Euclidean partial derivatives
w.r.t. any variable has already been proved and thus the proof goes smoothly without any further
complication.

{We now try to summarize the main arguments on which the proof is based. Roughly speaking, in
order to prove the estimate \eqref{eq:estim_tay_n_loc} (and \eqref{eq:estim_tay_n}), we shall be
able to connect any pair of points $z,\zeta\in\R\times\Rd$ and to have a control of the increment
of $u$ along the connecting path. The definition of  $C^{n,\a}_{B,\text{\rm loc}}$ (and
$C^{n,\a}_{B}$) does only specify the regularity along the fields $Y$ and $(\partial_{x_i})_{1\leq
i\leq p_0}$, but does not give any a priori information about the regularity along all the other
Euclidean fields $(\partial_{x_i})_{p_0<i\leq d}$. It seems then clear that, when trying to
connect $z$ and $\zeta$, we cannot simply \emph{move} along the canonical directions $(e_i)_{1\leq
i\leq d}$. We shall indeed take advantage of Lemma \ref{lem:connecting_curves} in order to go from
$\z$ to $z$ by using the integral curves $\gamma^{(n,k)}$ and then control the increment of $u$
along the connecting paths by exploiting the estimates contained in Remark \ref{rem:rem_ste4}.

To simplify the exposition, for each point listed above, we will first prove the inductive step on the global version (Part 3) of Theorem \ref{th:main}, in order to keep the proof free from additional technicalities needed to prevent the possibility of the integral curves $\gamma^{(n,k)}$ to exit the domain $\Omega$.  At the end of the section we will sketch the main guidelines through which the proof of the local version (Part 1 and Part 2) will become a straightforward modification of the global one}. In order to prove the main theorem we will need to state three auxiliary results, which will be proved step by step along with Theorem \ref{th:main}.

\begin{proposition}\label{prop:complementary_new}
{Let $u\in C^{2n+1,\a}_{B}$ with $\alpha\in ]0,1]$ and $n\in\Nzero$ with $n\leq r$. Then, there
exist the Euclidean partial derivatives $\partial_{x_{i}}u\in C^{0,\alpha}_{B}$ for any
$\bar{p}_{n-1}< i \leq \bar{p}_n$ and
\begin{equation}\label{eq:drivative_commutators}
Y_{v^{(n)}_i}^{(n)} u(z) = \partial_{x_i}u(z),\qquad z\in \Rdd,
\end{equation}
with $\big(v^{(n)}_{i}\big)_{\bar{p}_{n-1}< i\leq \bar{p}_n}$ being the family of vectors such that $v^{(n)}_i\in V_{0,n}$ with $B^n v^{(n)}_i=e_{i}$. Note that such family of vectors is
univocally defined (see Remark \ref{rem:linearmapping}).}
\end{proposition}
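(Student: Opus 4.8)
The plan is to obtain the statement from Lemma~\ref{lemm:commutators} together with a Taylor expansion of $u$ along the commutator curves $\gamma^{(n-1,k)}$ of Lemma~\ref{andl1_bis}. I use the outer induction, which by the time we treat order $2n+1$ provides Theorem~\ref{th:main} for all orders $\le 2n$ and the present Proposition for all indices $<n$. For $n=0$ there is nothing to prove: $v^{(0)}_i=e_i$, $Y^{(0)}_{e_i}=\p_{x_i}$, and the existence of $\p_{x_i}u\in C^{0,\a}_B$ for $i\le p_0$ is built into the definition of $C^{1,\a}_B$.

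Fix $i$ with $\bar{p}_{n-1}<i\le\bar{p}_n$, let $v=v^{(n)}_i\in V_{0,n}$ be the unique vector with $B^nv=e_i$ (Remark~\ref{rem:linearmapping}), and write $z=(t,x)$. By the (global) version of Lemma~\ref{lemm:commutators}, $Y^{(n)}_wu\in C^{0,\a}_B$ for every $w\in V_0$, and likewise $Y^{(n-1)}_wu\in C^{2,\a}_B$ and $Yu\in C^{2n-1,\a}_B$; moreover $w\mapsto Y^{(n)}_wu$ is linear, since as a differential operator $Y^{(n)}_w=\langle B^nw,\nabla\,\cdot\,\rangle$. Because $e^{\d Y^{(n)}_v}(z)=(t,x+\d e_i)$, it suffices to show
\[
u(t,x+he_i)-u(z)=h\,Y^{(n)}_vu(z)+o(h)\qquad\text{as }h\to 0,
\]
locally uniformly in $z$: this gives simultaneously the existence of $\p_{x_i}u(z)$ and the identity $\p_{x_i}u(z)=Y^{(n)}_vu(z)$, and then $\p_{x_i}u\in C^{0,\a}_B$ follows from the first assertion.

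To land exactly at $(t,x+he_i)$ I would invoke Lemma~\ref{lem:connecting_curves} with $y=he_i\in V_n$, obtaining $\z_{n-1}=z,\z_n,\dots,\z_r=(t,x+he_i)$ with $\z_k=\gamma^{(n-1,k)}_{v_k,\d_k}(\z_{k-1})$ and $\d_k\le c_B|h|^{1/(2n+1)}$, so that $u(t,x+he_i)-u(z)=\sum_{k=n}^r\big(u(\z_k)-u(\z_{k-1})\big)$. The term $k=n$ carries the leading contribution. Writing $\gamma^{(n-1,n)}_{v_n,\d_n}$ as the four-leg word $e^{-\d_n^2Y}\circ e^{-\d_n^{2n-1}Y^{(n-1)}_{v_n}}\circ e^{\d_n^2 Y}\circ e^{\d_n^{2n-1}Y^{(n-1)}_{v_n}}$, I would telescope $u(\z_n)-u(\z_{n-1})$ by pairing the two $Y^{(n-1)}_{v_n}$-legs and the two $Y$-legs; along each pair the two evaluation points differ by a $Y$-flow of parameter $\d_n^2$, which by \eqref{eq:ste12} has homogeneous size $\d_n$, plus an increment of homogeneous size $O(\d_n)$ supported in the blocks $\ge n$. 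Since all surviving monomials in the relevant Taylor polynomials of $Y^{(n-1)}_{v_n}u\in C^{2,\a}_B$ and $Yu\in C^{2n-1,\a}_B$ are time-monomials, the Taylor remainders (available from the outer induction) are $O(\d_n^{2n+1+\a})$, and the first-order terms combine, through $Y^{(n-1)}_{v_n}Yu-YY^{(n-1)}_{v_n}u=Y^{(n)}_{v_n}u$ and the continuity of $Y^{(n)}_{v_n}u\in C^{0,\a}_B$, into
\[
u(\z_n)-u(\z_{n-1})=\d_n^{2n+1}\,Y^{(n)}_{v_n}u(z)+o(\d_n^{2n+1})=h\,Y^{(n)}_vu(z)+o(h),
\]
the last equality because the prescribed block-$n$ displacement (eq.~\eqref{eq:ste25}) forces $\d_n^{2n+1}v_n=h\,v$, together with the linearity of $w\mapsto Y^{(n)}_wu$.

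For $n<k\le r$ I claim $u(\z_k)-u(\z_{k-1})=o(h)$. I would prove this by induction on $k$: $\gamma^{(n-1,k)}$ is the four-leg word built from $\gamma^{(n-1,k-1)}$ and $e^{\pm\d_k^2Y}$, so the same telescoping reduces $u(\z_k)-u(\z_{k-1})$ to the increment of $u$ along $\gamma^{(n-1,k-1)}$ (which is $\d_k^{2n+1}Y^{(n)}_{v_k}u(\cdot)+o(\d_k^{2n+1})$ for $k-1=n$, and $o(\d_k^{2n+1})$ for $k-1>n$, by the inductive hypothesis) plus the $Y$-legs, controlled exactly as above by $Yu\in C^{2n-1,\a}_B$ and $Y^{(n)}_vu\in C^{0,\a}_B$; since $\d_k^{2n+1}=O(h)$, no leading term survives. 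Summing the finitely many terms yields the displayed expansion. The main obstacle is precisely this error analysis of the nested commutator words: the only intrinsic regularity at our disposal is along $Y$ and along the $\p_{x_l}$ with $l\le p_0$ (and, inductively, with $l$ in the blocks $<n$), so one has to exploit carefully both the lower-order Taylor estimates already established for $u$, $Yu$ and the $Y^{(j)}_{\cdot}u$, and the fact that the homogeneous norm contracts $Y$-flow increments.
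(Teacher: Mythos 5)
Your outline is correct and, at its core, it is the paper's own mechanism — but the organization is genuinely leaner, so a comparison is worthwhile. The paper first establishes the full spatial Taylor estimate \eqref{eq:Tay_disp_spaz} for an \emph{arbitrary} spatial increment: it splits $\xi\to\bar x\to x$ through an intermediate point, invokes Proposition \ref{prop:complementary_bis_alternative} for the part of the increment lying in blocks $<\bar n$, and Proposition \ref{prop:complementary_alternative} plus Lemma \ref{lem:connecting_curves} for the part in blocks $\geq\bar n$; only then does it specialize to $x=\xi+\d e_i$ to read off \eqref{eq:esistenza_der}. You specialize from the outset to the pure block-$n$ increment $he_i$, which makes Proposition \ref{prop:complementary_bis_alternative} and the $\bar x$-splitting unnecessary and lets you aim for a first-order expansion with $o(h)$ remainder instead of the sharp $O(\|\cdot\|_B^{2n+1+\a})$ bound. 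What you cannot avoid, and correctly identify as the main obstacle, is the error analysis of the four-leg words: your claims that the $k=n$ leg contributes $\d_n^{2n+1}Y^{(n)}_{v_n}u(z)+o(\d_n^{2n+1})$ and that the legs $k>n$ contribute $o(h)$ are precisely Proposition \ref{prop:complementary_alternative} for $(n,m)=(\bar n,1)$, since for these curves the polynomial $\bar T_{2n+1}u\big(z,\g^{(n-1,k)}_{v,\d}(z)\big)$ collapses to $u(z)+\d^{2n+1}Y^{(n)}_{v}u(z)$ (for $k=n$) or to $u(z)$ (for $k>n$). In the paper's induction scheme that proposition is proved \emph{before} the present one, so you could simply cite it; as written, your sketch of the telescoping asserts that "the first-order terms combine" into the commutator, which conceals the real work, namely showing that the intermediate-order contributions $\d^{2j}Y^ju$ ($2\le j\le n$, each of size $\gg\d^{2n+1}$) and the cross terms between the $Y$-legs and the $Y^{(n-1)}$-legs cancel — this is the content of the terms $G_5$ and $G_6$ in the paper, including the delicate case $k=\max\{\bar n-1,0\}$ where the identity $Y^{(\bar n-1)}_vY=Y^{(\bar n)}_v+YY^{(\bar n-1)}_v$ is deployed. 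With that ingredient supplied (either by citation or by reproducing the $G_1$--$G_6$ bookkeeping), your argument closes correctly, including the identification $\d_n^{2n+1}v_n=hv$ and the passage from the pointwise expansion to $\p_{x_i}u=Y^{(n)}_{v_i^{(n)}}u\in C^{0,\a}_B$ via Lemma \ref{lemm:commutators}. Note finally that the full-strength estimate \eqref{eq:Tay_disp_spaz} is needed later anyway (Remark \ref{rem:michele} and the proof of Part 3 for $n=2\bar n+1$), so the saving is local to this proposition.
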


{
\begin{proposition}\label{prop:complementary_alternative}
Let $\alpha\in ]0,1]$, $n\in\Nzero$ with $n\leq r$, $m\in\{0,1 \}$ and $u\in C^{2 n+m,\a}_{B
}$. Then, for any  $\max \{n -1 ,0\}\leq  k\leq r$ 
and $v\in V_{0,k}$ with $|v|=1$, we have:
\begin{equation}\label{eq:estim_tay_n_bis}
\Big|u\Big(\g_{v,\d}^{\left(n-1,k\right)}(z)\Big)-T_{2 n+m} u\Big(z,\g_{v,\d}^{\left(n-1,k\right)}(z)\Big) \Big|\le c_B \|u\|_{C^{2 n+m,\a}_{B}} |\d|^{2 n+m+\a}, \qquad z=(t,x)\in\Rdd,\quad \delta\in\R,
\end{equation}
where $c_B$ is a positive constant that only depends on $B$.
\end{proposition}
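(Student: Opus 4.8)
The plan is to prove the estimate by a double induction: an outer induction on the order $N:=2n+m$, carried out simultaneously with Theorem \ref{th:main} and Proposition \ref{prop:complementary_new} (so that, in treating order $N$, one may use all three statements at every order strictly below $N$, together with Proposition \ref{prop:complementary_new} at order $N$ itself — the ingredient that gives meaning to the level-$n$ Euclidean first derivatives occurring in $T_{N}$ when $m=1$), and an inner induction on the index $k$, running over $\max\{n-1,0\}\le k\le r$ and mirroring the recursive construction \eqref{ande2_bis}--\eqref{eq:gamma_k1_bis} of the curves $\g_{v,\d}^{(n-1,k)}$. A recurring elementary ingredient will be the one-dimensional Taylor formula along the flow of $Y$,
\[
\Big|u\big(e^{\pm\d^{2}Y}(w)\big)-\sum_{2j\le N}\tfrac{(\pm\d^{2})^{j}}{j!}\,Y^{j}u(w)\Big|\le c_{B}\,\|u\|_{C^{N,\a}_{B}}\,|\d|^{N+\a},\qquad w\in\Rdd,
\]
which follows directly from Definition \ref{def:C_alpha_spaces} by iterating the fundamental theorem of calculus along $e^{\sigma Y}$ and from \eqref{eq:ste12}, and which is precisely the restriction of the sought estimate to $Y$-segments (the spatial part of the homogeneous increment between $w$ and $e^{\pm\d^{2}Y}(w)$ being null).

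For the base case $k=\max\{n-1,0\}$ the curve reduces to the single jump $z\mapsto\g_{v,\d}^{(n-1,k)}(z)=\big(t,\,x+\d^{2k+1}B^{k}v\big)$. If $n\le 1$ the displacement $\d^{2k+1}B^{k}v$ lies in $V_{0}$ and has size $O(\d)$; since $|v|=1$, the claim follows, after a telescoping along $e_{1},\dots,e_{p_{0}}$, from the one-variable Taylor formula with integral remainder, using only the existence and $C^{0,\a}_{B}$-regularity of the Euclidean derivatives $\p_{x}^{\b}u$ with $|\b|\le N$ and $\b$ supported on $V_{0}$, granted by Definition \ref{def:C_alpha_spaces}. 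If $n\ge 2$ the displacement lies in $V_{n-1}$, of homogeneous weight $2n-1$; since $|\b|_{B}=(2n-1)|\b|$ for $\b$ supported on $V_{n-1}$ and $2(2n-1)>2n+m$, the polynomial $T_{N}u\big(z,\g_{v,\d}^{(n-1,n-1)}(z)\big)$ reduces — via \eqref{ande7} and Proposition \ref{prop:complementary_new}, which provides the level-$(n-1)$ Euclidean derivatives — to $u(z)+\d^{2n-1}Y_{v}^{(n-1)}u(z)$, and the claim becomes $\big|u(z+(0,\d^{2n-1}B^{n-1}v))-u(z)-\d^{2n-1}Y_{v}^{(n-1)}u(z)\big|\le c_{B}\|u\|_{C^{N,\a}_{B}}|\d|^{N+\a}$; this last we obtain from the fundamental theorem of calculus along $B^{n-1}v$, the regularity $Y_{v}^{(n-1)}u\in C^{m+1,\a}_{B}$ (Lemma \ref{lemm:commutators}), and the already-established order-$(m+1)$ estimates of Theorem \ref{th:main} applied to $Y_{v}^{(n-1)}u$ to control its increment along $V_{n-1}$ (admissible since $m+1<N$ for $n\ge 1$).

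For the inner step $k\to k+1$ we unfold \eqref{eq:gamma_k1_bis}: setting $z_{1}=\g_{v,\d}^{(n-1,k)}(z)$, $z_{2}=e^{\d^{2}Y}(z_{1})$, $z_{3}=\g_{v,-\d}^{(n-1,k)}(z_{2})$ and $z_{4}=e^{-\d^{2}Y}(z_{3})=\g_{v,\d}^{(n-1,k+1)}(z)$, we decompose $u(z_{4})-T_{N}u(z,z_{4})$ as a telescoping sum of errors over the four elementary moves $z\to z_{1}\to z_{2}\to z_{3}\to z_{4}$: the errors along the two $Y$-segments are bounded by the $Y$-flow Taylor formula above, those along the two $\g^{(n-1,k)}$-segments by the inner inductive hypothesis applied with increments $\pm\d$. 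The Lie-derivative coefficients thereby produced, initially evaluated at the shifted points $z_{1}$, $z_{2}$, are re-expanded around $z$ by invoking Theorem \ref{th:main} at the strictly smaller orders $N-2j-|\b|_{B}$ for the functions $Y^{j}\p_{x}^{\b}u$, using that $z_{1},z_{2}$ lie at homogeneous distance $O(|\d|)$ from $z$ by \eqref{normstep}, \eqref{eq:ste12} and \eqref{eq:ste34}. Collecting the polynomial contributions, reordering them by their $\d$-order through the explicit formula $\g_{v,\d}^{(n-1,k+1)}(z)=z+\big(0,\d^{2(k+1)+1}B^{k+1}v\big)+\big(0,\tilde{S}_{n-1,k+1}(\d)v\big)$ of Lemma \ref{andl1_bis} together with the componentwise bounds \eqref{ande5}, and discarding every monomial of $\d$-order $>N$ (absorbed into the error), one is left exactly with $T_{N}u(z,z_{4})$; the cancellations at work here are those encoded in the commutator identity $Y_{v}^{(k+1)}=[Y_{v}^{(k)},Y]$ of \eqref{ande7b}, together with the crucial simplification that, since $z_{4}$ and $z$ share the same time coordinate, $T_{N}u(z,z_{4})$ reduces to the $k=0$ monomials of \eqref{eq:def_Tayolor_n}.

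The hard part is this last reassembly: one must verify that the Taylor polynomials attached to the four elementary moves recombine, modulo a remainder of size $O(|\d|^{N+\a})$, into $T_{N}u(z,z_{4})$ — a Baker--Campbell--Hausdorff-type bookkeeping of how the iterated Lie derivatives $Y^{j}\p_{x}^{\b}u$ reorganise under the conjugations $e^{\mp\d^{2}Y}$ and along $\g^{(n-1,k)}$ — while keeping precise track, through Proposition \ref{prop:complementary_new} and Lemma \ref{lemm:commutators}, of exactly which Euclidean derivatives are available at the order under consideration. Finally, the analogue of the statement for the local spaces $C^{N,\a}_{B,\text{\rm loc}}(\O)$, needed for parts 1)--2) of Theorem \ref{th:main}, follows with no new ideas by restricting $|\d|$ so that the finitely many integral curves entering the construction do not leave a fixed ball $\overline{D_{B}(\z,R_{\z})}\subseteq\O$.
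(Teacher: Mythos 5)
Your overall architecture is the same as the paper's: a simultaneous induction on the order together with Theorem \ref{th:main} and Proposition \ref{prop:complementary_new}, an inner induction on $k$ following the recursion \eqref{eq:gamma_k1_bis}, the four-move decomposition $z\to z_1\to z_2\to z_3\to z_4$ with the $Y$-flow Taylor formula (Remark \ref{rem:eq:mean_value_Yn}) on the $Y$-segments, the inductive hypothesis on the $\gamma^{(n-1,k)}$-segments, re-expansion of the shifted coefficients via lower-order instances of the theorem, and the commutator identity \eqref{ande7b} to recombine the polynomials; your base case is sound and essentially matches the paper's (integral versus mean-value form of the remainder is immaterial). There are, however, two genuine gaps. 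The first is a circularity in the case $m=1$: you feed Proposition \ref{prop:complementary_new} \emph{at the current order} $N=2n+1$ into the proof so as to give meaning to the level-$n$ Euclidean derivatives appearing in $T_{2n+1}$. But the existence of those derivatives is itself a consequence of the order-$N$ Taylor estimates (in the paper it is extracted from \eqref{eq:Tay_disp_spaz} via \eqref{eq:esistenza_der}), so it cannot be assumed beforehand. The paper avoids this by working with the surrogate polynomial $\bar{T}_{2\bar{n}+1}$ of \eqref{eq:def_Tayolor_n_bis}, in which the unavailable derivatives $\partial_{x_i}u$, $\bar{p}_{\bar{n}-1}<i\le\bar{p}_{\bar{n}}$, are replaced by the commutators $Y^{(\bar{n})}_{v_i^{(\bar{n})}}u$ whose existence and regularity are guaranteed by Lemma \ref{lemm:commutators}; only after proving Propositions \ref{prop:complementary_alternative} and \ref{prop:complementary_bis_alternative} for $\bar{T}$ does it deduce Proposition \ref{prop:complementary_new} and the identification $\bar{T}=T$. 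Without this device, or an independent proof of Proposition \ref{prop:complementary_new} at order $N$, your outer induction does not close.

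The second gap is that the step you yourself call ``the hard part'' --- the reassembly of the four polynomial contributions into $T_Nu(z,z_4)$ up to an $O(|\delta|^{N+\alpha})$ error --- is asserted rather than carried out. This is precisely the term $G_6$ in the paper's proof, and it is where the substance of the inductive step lies: one needs a three-way case analysis in $k$. For $k>\bar{n}$ one checks $G_6\equiv 0$; for $k=\bar{n}$ the mismatch reduces, via Remark \ref{rem:ste1}, to $\delta^{2\bar{n}+1}\big(Y_v^{(\bar{n})}u(z_1)-Y_v^{(\bar{n})}u(z_2)\big)$, controlled by $Y_v^{(\bar{n})}u\in C^{0,\alpha}_B\subseteq C^{\alpha}_Y$; for $k=\max\{\bar{n}-1,0\}$ one must invoke Proposition \ref{prop:complementary_new} at level $\bar{n}-1$, apply the identity $Y_v^{(\bar{n}-1)}Y=Y_v^{(\bar{n})}+YY_v^{(\bar{n}-1)}$, and estimate the resulting terms $F_1,\dots,F_3$ (plus two extra terms when $\bar{n}=1$) one by one. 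Declaring this a ``Baker--Campbell--Hausdorff-type bookkeeping'' does not substitute for the computation; in particular, the fact that the mismatch concentrates entirely at the lowest admissible $k$ and is resolved exactly by the commutator identity is the content of the proof, not a routine verification.
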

}
{
\begin{proposition}\label{prop:complementary_bis_alternative}
Let $\alpha\in ]0,1]$, $n\in\Nzero$ with $n\leq r$, $m\in\{0,1 \}$ and $u\in C^{2 n+m,\a}_{B
}$. Then, 
we have:
\begin{equation}\label{eq:estim_distanza_omog_n}
\left|u(t,x)-T_{2n+m} u\big( (t,x),(t,x+\xi) \big)\right| \le c_B \|u\|_{C^{2n+m,\a}_{B}} |\x|_{B}^{2n+m+\a}
, \qquad (t,x)\in\R\times\Rd,\quad \xi\in \bigoplus_{j=0}^{n-1}V_{j},
\end{equation}
where $c_B$ is a positive constant that only depends on $B$.
\end{proposition}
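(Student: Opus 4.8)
The plan is to obtain Proposition \ref{prop:complementary_bis_alternative} within the joint induction on the order $2n+m$ outlined at the beginning of Section \ref{sec:proof}: in proving it at order $2n+m$ I assume Theorem \ref{th:main} and Propositions \ref{prop:complementary_new}, \ref{prop:complementary_alternative} at all strictly smaller orders, together with the existence statement \eqref{eq:ste31} (the Euclidean derivatives $\partial_x^{\b}u$ exist and lie in $C^{2n+m-|\b|_B,\a}_{B}$ whenever $|\b|_B\le 2n+m$), which is established at the current order before the estimates. The case $n=0$ is trivial, since then $\bigoplus_{j=0}^{n-1}V_j=\{0\}$, so $\xi=0$ and $T_{m}u\big((t,x),(t,x)\big)=u(t,x)$.

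For the inductive step, decompose $\xi=\sum_{j=0}^{n-1}\xi^{[j]}$ along the grading and connect $(t,x)$ to $(t,x+\xi)$ through a finite chain $(t,x)=\z^{(0)},\z^{(1)},\dots,\z^{(n)}=(t,x+\xi)$ which installs the non-zero components $\xi^{[j]}$ one at a time, in order of increasing $j$; the step $\z^{(p-1)}\mapsto\z^{(p)}$ adds $\xi^{[j_p]}$ by flowing along the integral curve $\delta\mapsto e^{\delta Y^{(j_p)}_{v}}(\z^{(p-1)})=\z^{(p-1)}+\big(0,\delta B^{j_p}v\big)$ of the constant-coefficient field $Y^{(j_p)}_v$, with $B^{j_p}v=\xi^{[j_p]}$. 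By \eqref{ande7} and Proposition \ref{prop:complementary_new} the function $u$ is $Y^{(j_p)}_v$-differentiable with $Y^{(j_p)}_v u=\langle\xi^{[j_p]},\nabla_x u\rangle$; iterating this (using \eqref{eq:ste31} to see that $\partial_{x_i}^{k}u$, $i$ in level $j_p$, exists and lies in $C^{2n+m-k(2j_p+1),\a}_{B}$ as long as $k(2j_p+1)\le 2n+m$) yields a finite Taylor-type expansion along the curve,
\[
 u\big(\z^{(p)}\big)-u\big(\z^{(p-1)}\big)=\sum_{\substack{\b\ \text{on level}\ j_p\\ 1\le|\b|_B\le 2n+m}}\frac{(\xi^{[j_p]})^{\b}}{\b!}\,\partial^{\b}u\big(\z^{(p-1)}\big)\ +\ R_p ,
\]
whose remainder $R_p$ is controlled solely by the top surviving derivative $\partial_{x_i}^{K}u$ ($K$ maximal with $K(2j_p+1)\le 2n+m$): since $2n+m-K(2j_p+1)<2j_p+1$, the $B$-Taylor polynomial of that order of $\partial_{x_i}^{K}u$ along a pure level-$j_p$ increment collapses to a constant, and combining \eqref{eq:estim_tay_n} at the order $2n+m-K(2j_p+1)<2n+m$ with the bound $|(\xi^{[j_p]})^{\b}|\le|\xi^{[j_p]}|_B^{|\b|_B}$ for the integrand gives $|R_p|\le c_B\|u\|_{C^{2n+m,\a}_{B}}|\xi^{[j_p]}|_B^{2n+m+\a}\le c_B\|u\|_{C^{2n+m,\a}_{B}}|\xi|_B^{2n+m+\a}$.

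It then remains to move the derivatives in the principal terms back to $(t,x)$. Since $\z^{(p-1)}-(t,x)=\big(0,\sum_{q<p}\xi^{[j_q]}\big)$ involves only levels strictly below $j_p$, one application of \eqref{eq:estim_tay_n} to $\partial^{\b}u\in C^{2n+m-|\b|_B,\a}_{B}$ at order $2n+m-|\b|_B<2n+m$ writes $\partial^{\b}u(\z^{(p-1)})$ as its $B$-Taylor polynomial around $(t,x)$, i.e.\ $\sum_{|\g|_B\le 2n+m-|\b|_B}\tfrac{1}{\g!}\,\partial^{\b+\g}u(t,x)\big(\sum_{q<p}\xi^{[j_q]}\big)^{\g}$, plus a remainder bounded by $c_B\|u\|_{C^{2n+m,\a}_{B}}|\xi|_B^{2n+m-|\b|_B+\a}$; multiplied by the prefactor $(\xi^{[j_p]})^{\b}/\b!\le|\xi|_B^{|\b|_B}$, this remainder becomes $O\big(\|u\|_{C^{2n+m,\a}_{B}}|\xi|_B^{2n+m+\a}\big)$ — the exact cancellation of the $|\b|_B$ lost orders of regularity against $|\b|_B$ extra powers of the increment is the arithmetic core of the proof. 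In the principal part, $\b$ sits on level $j_p$ and $\g$ on lower levels, so their supports are disjoint: hence $\b!\,\g!=(\b+\g)!$ and $(\xi^{[j_p]})^{\b}\big(\sum_{q<p}\xi^{[j_q]}\big)^{\g}=\xi^{\b+\g}$, and summing over all steps $p$ and admissible $\b,\g$ each multi-index $\d$ with $1\le|\d|_B\le 2n+m$ is produced exactly once — via the step installing the highest level in $\mathrm{supp}\,\d$, paired with the re-expansion along the lower levels — with coefficient $\xi^{\d}/\d!$. Therefore $\sum_p\big(u(\z^{(p)})-u(\z^{(p-1)})\big)=\sum_{1\le|\d|_B\le 2n+m}\tfrac{1}{\d!}\partial^{\d}u(t,x)\,\xi^{\d}+O\big(\|u\|_{C^{2n+m,\a}_{B}}|\xi|_B^{2n+m+\a}\big)$, that is, $u(t,x+\xi)=T_{2n+m}u\big((t,x),(t,x+\xi)\big)+O\big(\|u\|_{C^{2n+m,\a}_{B}}|\xi|_B^{2n+m+\a}\big)$, which closes the induction.

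The main difficulty here is not any individual estimate but the bookkeeping behind the two displays above: one must check that only finitely many error contributions are produced, that every Euclidean derivative used has already been shown to exist at the order at which it is invoked (so that \eqref{eq:ste31} and \eqref{eq:estim_tay_n} are applied legitimately), and, above all, that homogeneous weight is conserved along every branch of the construction — which is precisely what guarantees that no sub-remainder ever falls below the target exponent $2n+m+\a$. The global form of the statement (in contrast with part 2 of Theorem \ref{th:main}) is convenient here, since the straight curves $\delta\mapsto\z^{(p-1)}+(0,\delta\,\xi^{[j_p]})$ used to connect the points are not required to remain in a prescribed domain.
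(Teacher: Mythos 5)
Your argument is correct, and it rests on exactly the two analytic ingredients the paper uses: the classical mean--value theorem along a single constant spatial direction (with the ``deficit'' order $2n+m-K(2j_p+1)<2j_p+1$ handled by applying the already-proven global Taylor estimate \eqref{eq:estim_tay_n} at that strictly lower order to the top surviving derivative), and \eqref{eq:estim_tay_n} at order $2n+m-|\b|_B$ to re-expand the derivatives $\p^{\b}u$ around the base point, with the loss of $|\b|_B$ orders of regularity compensated by $|\b|_B$ powers of the increment. The difference is purely organizational: the paper runs an induction on the number $k$ of graded components present in $\xi$, peeling off the \emph{top} level $v=\xi^{[k]}$ and splitting the remainder as $F_1$ (the lower-level increment from the shifted point, absorbed by the inductive hypothesis) plus $F_2$ (the re-centering of the Taylor polynomial from $(t,x+v)$ to $(t,x)$, which via the disjoint-support identity $\b!\,\g!=(\b+\g)!$ reduces to Taylor expansions of $\p^{\b}_x u$ along the pure level-$k$ direction); you instead unroll this into an explicit bottom-up chain and perform the multi-index recombination by hand, which is the same computation read in the opposite order. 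The paper's version buys a shorter write-up (the recombination is hidden in the inductive hypothesis), yours makes the conservation of homogeneous weight along every branch completely explicit. One point to be careful about: your appeal to ``\eqref{eq:ste31} established at the current order before the estimates'' is slightly imprecise, since in the paper's inductive scheme the Euclidean derivatives at level $n$ (those with $|\b|_B=2n+1$, $\b^{[n]}\neq0$) are only constructed \emph{after} this proposition, in Proposition \ref{prop:complementary_new} --- which is precisely why the paper works with the modified polynomial $\bar T_{2\bar n+1}$ in Step 2. Your argument survives because $\xi\in\bigoplus_{j=0}^{n-1}V_j$ forces every multi-index you actually differentiate with to satisfy $\b^{[n]}=0$, and those derivatives are covered by \eqref{eq:ste1}--\eqref{eq:ste4} of Remark \ref{rem:step2}; it would be worth saying so explicitly.
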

}

Propositions \ref{prop:complementary_alternative} and \ref{prop:complementary_bis_alternative} are particular cases of the main theorem and are preparatory to its proof. We will also make repeated use of the following.
\begin{remark}\label{rem:eq:mean_value_Yn}
Let $n\in\N_0$, $m\in\{0,1 \}$ and $u\in C^{2n+m,\alpha}_{B
}$. Then, by Definition
\ref{def:C_alpha_spaces}, we have $Y^{n} u\in C^{m+\alpha}_{Y
}$. Therefore, by the
Euclidean mean-value theorem along the vector field $Y$, for any $z=(t,x)\in\Rdd$ and
$\delta\in\R$
, there exists $\bar{\delta}$  with $|\bar{\delta}|\leq |\delta|$ such that
\begin{equation}\label{eq:mean_value_Yn_loc}
 u\big(e^{\delta Y}(z)\big)-u(z)-\sum_{i=1}^{n} \frac{\d^i}{i!}Y^iu(z)= \d^{n}\left( Y^{n} u\big(e^{\bar{\delta}Y}(z)\big)- Y^{n}u(z)\right), \end{equation}
and thus, by Definition \ref{def:intrinsic_alpha_Holder} along with Assumption
\ref{assume:formal_degrees},
\begin{equation}\label{eq:mean_value_Yn_B}
 \Big| u\big(e^{\delta Y}(z)\big)-u(z)-\sum_{i=1}^{n} \frac{\d^i}{i!}Y^iu(z) \Big|\leq 
 \|u\|_{C^{2n+m,\alpha}_{B}}|\d|^{n+\frac{m+\a}{2}},\qquad \delta\in\R,\quad z\in \R\times\R^d.
\end{equation}
\end{remark}

\subsection{Step 1}

Here we give the proofs for
\begin{enumerate}
\item[-] Proposition \ref{prop:complementary_alternative} for $n=0,$ $m=0$;
\item[-] 
Theorem \ref{th:main} {(Part 3)} for $n=0$.
\end{enumerate}
We start by recalling that:
\begin{equation}\label{eq:T0}
T_0 u\big(z,\z \big)=u(z),\qquad z,\z\in\Rdd.
\end{equation}
\begin{proof}[Proof of Propostion \ref{prop:complementary_alternative} for $n=0$, $m=0$.]${}$
\\
We prove the thesis by induction on $k$. For $k=0$ the estimate 
\eqref{eq:estim_tay_n_bis} 
trivially follows by combining definitions
\eqref{eq:def_gamma_minus1_k} and \eqref{ande2_bis} with the assumptions $v\in V_{0}$, $|v|=1$ and 
$u\in
C^{\a}_{\p_{x_{i}}}$ 
for any $i=1,\dots,p_{0}$. 

We now assume the thesis to hold for $k\geq 0$ and we prove it true for $k+1$. We recall
\eqref{eq:gamma_k1_bis} and set
 $$z_{0}=z,\quad z_{1}=\g_{v,\d}^{(0,k)}(z_{0}),\quad z_{2}=e^{\d^{2}Y}\left(z_{1}\right),
 \quad z_{3}=\g_{v,-\d}^{(0,k)}(z_{2}),\quad z_{4}=e^{-\d^{2}Y}\left(z_{3}\right)=\g_{v,\d}^{(0,k+1)}(z)=\g_{v,\d}^{(-1,k+1)}(z).$$
Now, by triangular inequality we get
 $$\big|u\big(\g_{v,\d}^{(-1,k+1)}(z)\big)-u(z)\big|\le
 \sum_{i=1}^{4}\left|u(z_{i})-u(z_{i-1})\right|,$$
and thus, 
\eqref{eq:estim_tay_n_bis} 
for $k+1$ follows from
the inductive hypothesis and from the assumption 
$u\in
C^{\a}_{Y}$
.
\end{proof}
We are now ready to prove Part 3 of Theorem \ref{th:main} for $n=0$.
\begin{proof}[Proof of Theorem \ref{th:main} {(Part 3)} for $n=0$.]  ${}$ \\
We first consider the particular case
$z=(t,x)$, $\zeta=(t,\xi)$, with $x,\x\in\R^{d}$. Precisely, we show that, if $u\in C^{0,\a}_{B}$ we have
\begin{equation}\label{eq:estim_distanza_omog_0}
\left|u(t,x)- u(t,\xi)\right| \le c_B \|u\|_{C^{0,\a}_{B}} |x-\x|_{B}^{\a},\qquad t\in\R,\quad x,\x\in\R^{d}.
\end{equation}
{By the triangular inequality, we obtain
 $$
  \left|u(t,x)- u(t,\xi)\right| \le \sum_{i=0}^{r} |u(\z_i) - u(\z_{i-1})|, 
 $$
where the points $\zeta_{k}=(t,\xi_{k})$, for $k=-1,0,\cdots,r$, are  defined as in Lemma
\ref{lem:connecting_curves} by setting $n=0$ and $v=x-\xi$.
The estimate 
\eqref{eq:estim_distanza_omog_0} 
 then stems from 
 \eqref{eq:estim_tay_n_bis} 
 with $n=0$, combined with \eqref{eq:ste25}.}

We now prove the general case. For any $z=(t,x),\z=(s,\xi)\in\Rdd$, by triangular inequality we get
\begin{align}
  \left|u(z)-u(\z)\right|&\le  |u(z)-u(e^{(t-s)Y}(\z))| +|u(e^{(t-s)Y}(\z))-u(\z)|  \\
                           &=    |u(t,x)-u(t,e^{(t-s)B}\x)|+|u(e^{(t-s)Y}(\z))-u(\z)|.\label{ande24}
\end{align}
Now, 
to prove \eqref{eq:estim_tay_n}, we use \eqref{eq:estim_distanza_omog_0} to bound the first term in \eqref{ande24}, $u\in C_{Y}^{\alpha}$ to bound the second one, and we obtain
\begin{equation}
 \left|u(z)-u(\z)\right|\le c_B \|u\|_{C^{0,\a}_{B}}  \|\z^{-1}\circ z\|_{B}^{\a}, \qquad z=(t,x),\ \z=(s,\xi)\in\Rdd,
\end{equation}
which concludes the proof.
\end{proof}

\subsection{Step 2}
Throughout this section we fix $\bar{n}\in \{0,\cdots,r \}$ {and assume} to be holding true:
\begin{enumerate}
\item[-] Proposition \ref{prop:complementary_new} 
for any $0\leq n\leq \bar{n}-1$, if $\bar{n}\geq 1$;
\item[-] Theorem \ref{th:main} 
for any $0\leq n\leq 2\bar{n}$.
\end{enumerate}
Then we prove:
\begin{enumerate}
\item[-] Propositions \ref{prop:complementary_alternative} and \ref{prop:complementary_bis_alternative} for $n=\bar{n},m=1$;
\item[-] Proposition \ref{prop:complementary_new} 
for $n=\bar{n}$;
\item[-] Theorem \ref{th:main} {(Part 3)} 
for $n=2\bar{n}+1$.
\end{enumerate}

This induction step has to be treated separately because we
cannot assume a priori the existence of the first order Euclidean partial derivatives w.r.t. the $\bar{n}$-th level variables
. Therefore, we introduce the following alternative definition of $(2\bar{n}+1)$-th order
$B$-Taylor polynomial of $u$ that does not make explicit use of the derivatives
$\big(\partial_{\bar{p}_{\bar{n}-1}+i}u\big)_{1\leq i\leq p_{\bar{n}}}$: 
\begin{align}
\hspace{-30pt}
\bar{T}_{2{\bar{n}}+1} u(\z,z)&:= \sum_{{0\leq 2 k + |\beta|_B \leq 2{\bar{n}}+1 \atop \beta^{[{\bar{n}}]}=0}}\frac{1}{k!\,\beta!}
\big(Y^k \partial_{\xi}^{\beta}u(\z)\big) (t-s)^k\big( x-e^{(t-s)B}\xi  \big)^{\beta}\\ &\quad +
{\sum_{i=\bar{p}_{{\bar{n}}-1}+1}^{\bar{p}_{\bar{n}}} } \big( Y_{v^{({\bar{n}})}_i}^{({\bar{n}})}
u(\z) \big)\, {\big(x-e^{B(t-s)}\xi\big)_{i} },\qquad\qquad z=(t,x),\ \zeta=(s,\xi)\in\Rdd,
\label{eq:def_Tayolor_n_bis}
\end{align}
with $\big(v^{({\bar{n}})}_{i}\big)_{\bar{p}_{{\bar{n}}-1}< i\leq \bar{p}_{\bar{n}}}$ being the family of vectors such that $v^{({\bar{n}})}_i\in V_{0,{\bar{n}}}$ with $B^{\bar{n}} v^{({\bar{n}})}_i=e_{i}$. 
\begin{remark}\label{rem:step2}
The {Taylor} polynomial $\bar{T}_{2{\bar{n}}+1}u$ is well-defined {for any} $u\in
C^{2{\bar{n}}+1,\alpha}_{B,\text{\rm loc}}$
. In fact, by Lemma \ref{lemm:commutators} we have
\begin{equation}\label{eq:ste13}
Y_{v^{({\bar{n}})}_i}^{({\bar{n}})}u\in C^{0,\alpha}_{B,\text{\rm loc}},\qquad
\bar{p}_{{\bar{n}}-1}< i\leq \bar{p}_{\bar{n}}.
\end{equation}
On the other hand, by using {the} inclusion of the spaces $C^{n,\alpha}_{B,\text{\rm loc}}$ and
the inductive hypothesis (Theorem \ref{th:main}, Part 1), the Euclidean derivatives $$
\partial_{\xi}^{\beta}u(\z) ,\qquad 0\leq  |\beta|_B \leq 2{\bar{n}}+1,\quad  \beta^{[{\bar{n}}]}=0,
$$
are well defined. Therefore, by combining the inductive hypothesis on Proposition \ref{prop:complementary_new} and  Lemma \ref{lemm:commutators}, we have
\begin{equation}\label{eq:ste1}
Y^k \partial_{\xi}^{\beta}u(\z) \in C^{2{\bar{n}}+1-2k -|\beta|_B,\alpha}_{B,\text{\rm loc}}
,\qquad 0\leq 2k+ |\beta|_B \leq 2{\bar{n}}+1,\quad  \beta^{[{\bar{n}}]}=0.
\end{equation}
In particular, by analogous arguments, if $u\in C^{2{\bar{n}}+1,\alpha}_{B}$ we have that
\begin{align}
&Y_{v^{({\bar{n}})}_i}^{({\bar{n}})}u\in C^{0,\alpha}_{B},\qquad    \bar{p}_{{\bar{n}}-1}< i\leq \bar{p}_{\bar{n}},\label{eq:ste32}\\
&Y^k \partial_{\xi}^{\beta}u(\z) \in C^{2{\bar{n}}+1-2k -|\beta|_B,\alpha}_{B}  ,\qquad 0\leq 2k+ |\beta|_B \leq 2{\bar{n}}+1,\quad  \beta^{[{\bar{n}}]}=0.\label{eq:ste4}
\end{align}
\end{remark}
\begin{remark}\label{rem:ste1}
By simple linear algebra arguments, it is also easy to show that for a given $\alpha\in [0,1]$, $n\in\{0,\cdots,r  \}$ and {$u\in C^{2n+1,\alpha}_{B}$}, 
we have
\begin{equation}\label{eq:ste10}
 {\sum_{i=\bar{p}_{{{n}}-1}+1}^{\bar{p}_{{n}}} } \left(Y_{v^{({n})}_i}^{({n})} u(\z)\right) {(B^{n}v)_{i} } = Y_{v}^{n} u(\z), \qquad \zeta\in\Rdd,\quad v\in V_{0,n}.
\end{equation}
\end{remark}

\subsubsection{Proof of Propositions \ref{prop:complementary_alternative} and \ref{prop:complementary_bis_alternative},  for $\bf{n={\bar{n}}}$ and $\bf{m=1}$}
We prove Propositions \ref{prop:complementary_alternative} and \ref{prop:complementary_bis_alternative} on $\bar{T}_{2{\bar{n}}+1} u$, for $n={\bar{n}}$ and $m=1$.  Note that, after proving Proposition \ref{prop:complementary_new} for $n=\bar{n}$, the two versions of the Taylor polynomials $\bar{T}_{2{\bar{n}}+1} u$ and $T_{2{\bar{n}}+1} u$ will turn out to be equivalent.
\begin{proof}[Proof of Proposition \ref{prop:complementary_alternative} for $n={\bar{n}},m=1$]${}$
\\
We assume {$u\in C^{2{\bar{n}}+1,\alpha}_{B
}$} and we have to prove that for any  $\max \{\bar{n} -1 ,0\}\leq  k\leq r$, 
 $v\in V_{0,k}$ with $|v|=1$, and $z=(t,x)\in \R\times \R^d$, we have
\begin{equation}\label{eq:estim_tay_n_loc_ter}
u\Big(\g_{v,\d}^{\left(\bar{n}-1,k\right)}(z)\Big)=\bar{T}_{2 \bar{n}+1} u\Big(z,\g_{v,\d}^{\left(\bar{n}-1,k\right)}(z)\Big) + R^{(\bar{n}-1,k)}_{v,\delta}(z),
\end{equation}
with
\begin{equation}\label{eq:estim_tay_n_ter}
|R^{(\bar{n}-1,k)}_{v,\delta}(z)|\le c_B \|u\|_{C^{2 \bar{n}+1,\a}_{B}} |\d|^{2 \bar{n}+1+\a}, \qquad z=(t,x)\in\Rdd,\quad \delta\in\R.
\end{equation}
We prove 
\eqref{eq:estim_tay_n_ter} by induction on $k$. 

\noindent {\bf Proof for ${k=\max \{\bar{n} -1 ,0\}}$:} because of the particular definition of
$\g_{v,\d}^{({n},k)}$ we have to treat separately the cases $\bar{n}=0$, $\bar{n}=1$ and
${\bar{n}}>1$.

\noindent \underline{Case ${\bar{n}}=0$}: by \eqref{eq:def_gamma_minus1_k} and \eqref{ande2_bis}
we have $$ \g_{v,\d}^{\left(-1,0\right)}(z)=u(t,x+\d v), $$ and thus, by
\eqref{eq:def_Tayolor_n_bis}, \eqref{eq:estim_tay_n_loc_ter} for $k=0$ reads as
\begin{equation}\label{eq:ste7}
u(t,x+\d v)=u(t,x) + \d \sum_{i=1}^{p_0}\p_{x_{i}}u(t,x)v_{i} + R^{(-1,0)}_{v,\delta}(z). 
\end{equation}
Now, by the 
{standard mean-value theorem}, there exist $(\bar{v}_i)_{i=1,\dots,p_0}$ with  $\bar{v}_i\in V_0$
and $|\bar{v}_i|\leq |v| \le 1$, such that
\begin{equation}\label{eq:ste8}
u(t,x+\d v)-u(t,x)= \d \sum_{i=1}^{p_0}\p_{x_{i}}u(t,x+\d \bar{v}_i)v_{i},
\end{equation}
and thus {\begin{align}
  R^{(-1,0)}_{v,\delta}(z) &=\d \sum_{i=0}^{p_0}(\p_{x_i}u(t, x+\d \bar{v}_i)-\p_{x_i}u(t,x)) v_i .
\end{align}
Note that $\p_{x_i}u \in C^{0,\alpha}_{B}$ for any $1\leq i \leq p_0$ {because} $u\in
C^{1,\alpha}_{B}$ by assumption. Therefore
estimate \eqref{eq:estim_tay_n_ter} 
stems from Part 3 of Theorem \ref{th:main} for $n=0$.
}

\smallskip\noindent\underline{Case ${\bar{n}}=1$}: by \eqref{ande2_bis} we have $$
\g_{v,\d}^{\left(0,0\right)}(z)=u(t,x+\d v), $$ and thus, by \eqref{eq:def_Tayolor_n_bis},
\eqref{eq:estim_tay_n_loc_ter} for $k=0$ reads as
\begin{align}\label{eq:ste5}
u(t,x+\d v)&=u(t,x) + \d \sum_{i=1}^{p_0}\p_{x_{i}}u(t,x)v_{i}+
\frac{\d^2}{2!}\sum_{i,j=1}^{p_0}\p_{x_i x_j}u(t,x) v_i v_j
+\frac{\d^3}{3!}\sum_{i,j,l=1}^{p_0}\p_{x_i x_j x_l}u(t,x) v_i v_j v_l + R^{(0,0)}_{v,\delta}(z). 
\end{align}
Now, by the mean-value theorem, there exist $(\bar{v}_{i,j,k})_{1\leq i,j,k \leq p_0}$, with
$\bar{v}_{i,j,k}\in V_0$ and $|\bar{v}_{i,j,k}|\leq |v|\leq 1$, such that
\begin{align}\label{eq:ste6}
u(t,x+\d v)-u(t,x) - \d \sum_{i=1}^{p_0}\p_{x_{i}}u(t,x)v_{i}-
\frac{\d^2}{2!}\sum_{i,j=1}^{p_0}\p_{x_i x_j}u(t,x) v_i v_j&=
\frac{\d^3}{3!}\sum_{i,j,l=1}^{p_0}\p_{x_i x_j x_l}u(t,x+\d \bar{v}_{i,j,k}) v_i v_j v_l , 
\end{align}
and thus {\begin{align}
  R^{(0,0)}_{v,\delta}(z) &= \frac{\delta^3}{3!} \sum_{i,j,l=1}^{p_0} \big(\partial_{x_i,x_j,x_l}u(t,x+\d \bar{v}_{i,j,l})-\partial_{x_i,x_j,x_l}u(t,x)\big) v_i v_j v_l.
\end{align}
Note that $\p_{x_i,x_j,x_l}u \in C^{0,\alpha}_{B}$ for any $1\leq i,j,l \leq p_0$ since, by
assumption, $u\in C^{3,\alpha}_{B}$. Estimate \eqref{eq:estim_tay_n_ter} then stems from Part 3 of
Theorem \ref{th:main} for $n=0$.
}

\smallskip\noindent\underline{Case ${\bar{n}}>1$}:
by \eqref{ande2_bis} we have $$
\g_{v,\d}^{\left(\bar{n}-1,\bar{n}-1\right)}(z)=u(t,x+\d^{2{\bar{n}}-1} B^{{\bar{n}}-1}v), $$ and
thus, by \eqref{eq:def_Tayolor_n_bis}, \eqref{eq:estim_tay_n_loc_ter} for $k=\bar{n}-1$ reads as
\begin{equation}\label{eq:ausiliar_general_loc_2}
u\big(t,x+\d^{2{\bar{n}}-1} B^{{\bar{n}}-1}v\big)=u(t,x) + \d^{2{\bar{n}}-1}\sum_{i=\bar{p}_{{\bar{n}}-2}+1}^{\bar{p}_{{\bar{n}}-1}}\p_{x_{i}}u(t,x)(B^{{\bar{n}}-1}v)_{i} + R^{(\bar{n}-1,\bar{n}-1)}_{v,\delta}(z). 
\end{equation}
Now, by the mean-value theorem, there {exists} a family of vectors
$(\bar{v}_i)_{\bar{p}_{{\bar{n}}-2}< i\leq \bar{p}_{{\bar{n}}-1}}$, with  $\bar{v}_i \in V_{\bar{n}-1}$ and 
$|\bar{v}_i|\leq |B^{{\bar{n}}-1}v|\leq c_B$, such that
\begin{equation}
u(t,x+\d^{2{\bar{n}}-1} B^{{\bar{n}}-1}v)-u(t,x)=\d^{2{\bar{n}}-1} \sum_{i=\bar{p}_{{\bar{n}}-2}+1}^{\bar{p}_{{\bar{n}}-1}}\p_{x_{i}}u(t,x+\d^{2{\bar{n}}-1} 
\bar{v}_i)(B^{{\bar{n}}-1}v)_i,
\end{equation}
and thus, {\begin{align}
  R^{(\bar{n}-1,\bar{n}-1)}_{v,\delta}(z)  &=\d^{2{\bar{n}}-1} \sum_{i=\bar{p}_{{\bar{n}}-2}+1}^{\bar{p}_{{\bar{n}}-1}}\left(\p_{x_{i}}u(t,x+\d^{2{\bar{n}}-1}
  \bar{v}_i)-\p_{x_{i}}u(t,x)\right)(B^{{\bar{n}}-1}v)_i\\
  & =\d^{2{\bar{n}}-1} \sum_{i=\bar{p}_{{\bar{n}}-2}+1}^{\bar{p}_{{\bar{n}}-1}}\Big(\p_{x_{i}}u(t,x+\d^{2{\bar{n}}-1} 
  \bar{v}_i)-T_2 (\p_{x_{i}}u)\big( (t,x),  (t,x+\d^{2{\bar{n}}-1} 
  \bar{v}_i) \big)\Big)(B^{{\bar{n}}-1}v)_i.\label{eq:ste3}
\end{align}
Now, by \eqref{eq:ste4} in Remark \ref{rem:step2}, we have $\p_{x_{i}}u \in C^{2,\alpha}_{B} $ for
any $\bar{p}_{{\bar{n}}-2}<i\leq \bar{p}_{{\bar{n}}-1}$. Therefore estimate
\eqref{eq:estim_tay_n_ter} stems from Part 3 of Theorem \ref{th:main} for $n=2$.
}

\medskip\noindent{\bf Inductive step on $k$}:
we assume the thesis to hold true for a fixed $ \max \{\bar{n} -1 ,0\}\leq k< r$ and prove it true
for $k+1$. Consider thus $v\in V_{0,k+1}$ with $|v|=1$. Set $$
\tilde{T}_{2{\bar{n}}+1}u(\z,z)=\bar{T}_{2{\bar{n}}+1} u(\z,z)-u(\z),\qquad
z,\zeta\in\R\times\R^d, $$ and
\begin{equation}\label{eq:ste11}
z_{0}=z,\quad z_{1}=\g_{v,\d}^{\left(\bar{n}-1,k\right)}(z_{0}),\quad z_{2}=e^{\d^{2}Y}\left(z_{1}\right),
 \quad z_{3}=\g_{v,-\d}^{\left(\bar{n}-1,k\right)}(z_{2}),\quad z_{4}=e^{-\d^{2}Y}\left(z_{3}\right)=\g_{v,\d}^{\left(\bar{n}-1,k+1\right)}(z).
\end{equation}
According to this notation we have
\begin{equation}\label{eq:mic1_bis}
 R^{(\bar{n}-1,k+1)}_{v,\delta}(z)= u\Big(\g_{v,\d}^{\left(\bar{n}-1,k+1\right)}(z)\Big)-\bar{T}_{2 \bar{n}+1} u\Big(z,\g_{v,\d}^{\left(\bar{n}-1,k+1\right)}(z)\Big) = u(z_4)-\bar{T}_{2{\bar{n}}+1}u(z_0,z_4)= \sum_{i=1}^6 G_i,
\end{equation}
with
\begin{align}
&  G_1=  u(z_4)-u(z_3) -\sum_{i=1}^{\bar{n}} \frac{(-\d^2)^i}{i!}Y^iu(z_3),\qquad\qquad\qquad\qquad G_2= u(z_3)-u(z_2)-\tilde{T}_{2{\bar{n}}+1}u(z_2,z_3),
  \\
& G_3 = \sum_{i=1}^{\bar{n}} \frac{(-\d^2)^i}{i!}Y^iu(z_2) +u(z_2)-u(z_1),\qquad\qquad\qquad\qquad G_4 = \tilde{T}_{2{\bar{n}}+1}u(z_1,z_0) +u(z_1)-u(z_0),\\
& G_5 = \sum_{i=1}^{\bar{n}} \frac{(-\d^2)^i}{i!}\left(Y^i u(z_3)-Y^iu(z_2)-\tilde{T}_{2(\bar{n}-i)+1}Y^i u(z_2,z_3)\right), \\
& G_6 = \tilde{T}_{2{\bar{n}}+1}u(z_2,z_3)- \tilde{T}_{2{\bar{n}}+1}u(z_1,z_0)-\tilde{T}_{2{\bar{n}}+1}u(z_0,z_4)+ \sum_{i=1}^{\bar{n}} \frac{(-\d^2)^i}{i!}\tilde{T}_{2(\bar{n}-i)+1}Y^i u(z_2,z_3).
\end{align}
Now, by applying Remark \ref{rem:eq:mean_value_Yn} with $n=\bar{n}$, $m=1$, on $G_1$ and $G_3$,
and by using the inductive hypothesis on $G_2$ and $G_4$ (note that by \eqref{eq:inclusion_V_0_n}
$V_{0,k+1}\subseteq V_{0,k}$), we have
 $$
  |G_1+G_2+G_3+G_4| \le c_B \|u\|_{C^{2 \bar{n}+1,\a}_{B}} |\d|^{2 \bar{n}+1+\a}, \qquad
  z=(t,x)\in\Rdd,\quad \delta\in\R.
 $$
To bound $G_5$, it is enough to observe that, by Definition {\ref{def:C_alpha_spaces}, {$u\in
C^{2\bar{n}+1,\alpha}_{B}$} implies $Y^i u \in C^{2(\bar{n}-i)+1,\alpha}_{B}$}, for any
$i=1,\cdots, \bar{n}$. Therefore, the bound follows by applying Part 3 of Theorem \ref{th:main}
for $n=2(\bar{n}-i)+1$, combined with \eqref{normstep}.

In order to estimate $G_6$ and conclude the proof, we need to distinguish on whether
$k=\max\{\bar{n}-1,0\}$, $k=\bar{n}$ or $k>\bar{n}$.

\smallskip\noindent\underline{Case $k>{\bar{n}}$}: there is nothing to prove because, by definitions \eqref{eq:def_Tayolor_n_bis} and
\eqref{eq:ste11}, we have  $G_6\equiv 0$.

\smallskip\noindent\underline{Case $k={\bar{n}}$}: first note that, in this case, the term $G_6$ reduces to
\begin{equation}
G_6 = \tilde{T}_{2{\bar{n}}+1}u(z_2,z_3)- \tilde{T}_{2{\bar{n}}+1}u(z_1,z_0) = \tilde{T}_{2{\bar{n}}+1}u\big(z_2,\g_{v,-\d}^{\left(\bar{n}-1,\bar{n}\right)}(z_{2})\big)- \tilde{T}_{2{\bar{n}}+1}u\big(z_1,\g_{v,-\d}^{\left(\bar{n}-1,\bar{n}\right)}(z_{1})\big),
\end{equation}
and by definition \eqref{eq:def_Tayolor_n_bis}, along with \eqref{eq:gamma_k}-\eqref{ande6}, we
get {\begin{align} |G_6| & = \Big| \delta^{2\bar{n}+1}
{\sum_{i=\bar{p}_{{\bar{n}}-1}+1}^{\bar{p}_{\bar{n}}} } \Big( Y_{v^{({\bar{n}})}_i}^{({\bar{n}})}
u(z_1) - Y_{v^{({\bar{n}})}_i}^{({\bar{n}})} u(z_2) \Big)\, \big(B^{\bar{n}}v\big)_{i} \Big|=
 \intertext{(by Remark \ref{rem:ste1} with $n=\bar{n}$ and since $v\in
V_{0,{\bar{n}}+1}\subseteq V_{0,{\bar{n}}}$)} & =\Big| \delta^{2\bar{n}+1} \big(
Y_{v}^{({\bar{n}})} u(z_1) - Y_{v}^{({\bar{n}})} u(z_2) \big) \Big|\le  \intertext{(by hypothesis
$u\in C^{2\bar{n}+1,\alpha}_{B}$ and thus, by Lemma \ref{lemm:commutators},
$Y_{v}^{({\bar{n}})}u\in C^{0,\alpha}_{B}\subseteq C^{\alpha}_{Y}$)} &\leq c_B \|u\|_{C^{2
\bar{n}+1,\a}_{B}} |\d|^{2 \bar{n}+1+\a}.
\end{align}}

\noindent\underline{Case $k=\max\{\bar{n}-1,0\}$}: we only need to prove the case $\bar{n}>0$. We first consider $\bar{n}\geq 2$. We have
\begin{align}
G_6 
&= \tilde{T}_{2\bar{n}+1}u\big(z_2,\g_{v,-\d}^{\left(\bar{n}-1,\bar{n}-1\right)}(z_{2})\big)- \tilde{T}_{2\bar{n}+1}u\big(z_1,\g_{v,-\d}^{\left(\bar{n}-1,\bar{n}-1\right)}(z_{1})\big)-\tilde{T}_{2\bar{n}+1}u\big(z_0,\g_{v,\d}^{\left(\bar{n}-1,\bar{n}\right)}(z_{0})\big)\\
&\quad + \sum_{i=1}^{\bar{n}} \frac{(-\d^2)^i}{i!}\tilde{T}_{2(\bar{n}-i)+1}Y^i u\big(z_2,\g_{v,-\d}^{\left(\bar{n}-1,\bar{n}-1\right)}(z_{2})\big).
\end{align}
Now recall that, by \eqref{eq:gamma_k}-\eqref{ande6},
\begin{align}
\g_{v,-\d}^{\left(\bar{n}-1,\bar{n}-1\right)}(z)&=\big(  t, x - \delta^{2(\bar{n}-1)+1} B^{\bar{n}-1} v  \big),\\
 \g_{v,\d}^{\left(\bar{n}-1,\bar{n}\right)}(z)&=\big(  t, x + \delta^{2\bar{n}+1} B^{\bar{n}} v  +\tilde{S}_{\bar{n}-1,\bar{n}}(\d)v  \big),\qquad \tilde{S}_{\bar{n}-1,\bar{n}}(\d)v\in \bigoplus_{j=\bar{n}+1}^{r}V_{j},
\end{align}
and thus, by definition \eqref{eq:def_Tayolor_n_bis}, 
we obtain
\begin{align}
 G_6 &=  \delta^{2(\bar{n}-1)+1} {\sum_{i=\bar{p}_{\bar{n}-2}+1}^{\bar{p}_{\bar{n}-1}} \big( \partial_{x_{i}} u(z_1) - \partial_{x_{i}} u(z_2) +\delta^2 \partial_{x_{i}} Y u(z_2) \big)\, (B^{\bar{n}-1} v)_i 
-  \delta^{2\bar{n}+1} \sum_{i=\bar{p}_{{\bar{n}}-1}+1}^{\bar{p}_{\bar{n}}}}
Y^{(\bar{n})}_{v^{(\bar{n})}_i} u(z_0) \,(B^{\bar{n}} v)_i=
 \intertext{(by Proposition \ref{prop:complementary_new} for $n=\bar{n}-1$)}
 &=  \delta^{2(\bar{n}-1)+1} {\sum_{i=\bar{p}_{\bar{n}-2}+1}^{\bar{p}_{\bar{n}-1}}} \big( Y^{(\bar{n}-1)}_{v^{(\bar{n}-1)}_i} u(z_1) - Y^{(\bar{n}-1)}_{v^{(\bar{n}-1)}_i} u(z_2) +\delta^2 Y^{(\bar{n}-1)}_{v^{(\bar{n}-1)}_i} Y u(z_2) \big) \, (B^{\bar{n}-1} v)_i \\
 &\quad  -  \delta^{2\bar{n}+1} {\sum_{i=\bar{p}_{{\bar{n}}-1}+1}^{\bar{p}_{\bar{n}}}} Y^{(\bar{n})}_{v^{(\bar{n})}_i} u(z_0)(B^{\bar{n}} v)_i=
\intertext{(by applying Remark \ref{rem:ste1} with $n=\bar{n}-1$ and $n=\bar{n}$, and since $v\in
V_{0,\bar{n}}\subseteq V_{0,\bar{n}-1}$)} &=\delta^{2(\bar{n}-1)+1}  \big( Y^{(\bar{n}-1)}_{v}
u(z_1) - Y^{(\bar{n}-1)}_{v} u(z_2) +\delta^2 Y^{(\bar{n}-1)}_{v} Y u(z_2)\big) -  \delta^{2
\bar{n}+1}   Y^{\bar{n}}_{v} u(z_0)= \intertext{(since, by definition \eqref{ande7b},
$Y^{\bar{n}-1}_{v}Y   =  Y^{\bar{n}}_{v} + YY^{\bar{n}-1}_{v} $)} &= \delta^{2(\bar{n}-1)+1} \big(
Y^{(\bar{n}-1)}_{v} u(z_1) - Y^{(\bar{n}-1)}_{v} u(z_2) +\delta^2  Y
Y^{(\bar{n}-1)}_{v}u(z_2)\big) +  \delta^{2 \bar{n}+1} \big(Y^{\bar{n}}_{v} u(z_2) -
Y^{\bar{n}}_{v} u(z_0) \big)=\sum_{i=1}^3 F_i.
\end{align}
with
\begin{align}
 F_1 &= \delta^{2(\bar{n}-1)+1}  \left(Y^{(\bar{n}-1)}_{v} u(z_1) - Y^{(\bar{n}-1)}_{v} u(z_2)+\delta^2  Y Y^{(\bar{n}-1)}_{v}u(z_2)\right),\\
 F_2 &= \delta^{2 \bar{n}+1}\left(Y^{\bar{n}}_{v} u(z_2) - Y^{\bar{n}}_{v} u(z_1)\right),\qquad\qquad
 F_3 = \delta^{2 \bar{n}+1}\left(Y^{\bar{n}}_{v} u(z_1) - Y^{\bar{n}}_{v} u(z_0)\right).
\end{align}
{Now, to bound $F_1$ it is sufficient to note that, by Lemma \ref{lemm:commutators}, 
$Y_{v}^{({\bar{n}}-1)}u\in C^{2,\a}_{B}$ and thus the bounds directly follow by applying Remark
\ref{rem:eq:mean_value_Yn} with $n=1$ and $m=0$. To bound the terms $F_2$ and $F_3$ we
use that, by Lemma \ref{lemm:commutators}, 
$Y^{\bar{n}}_{v}u\in C^{0,\a}_{B}$
. The estimate for $F_3$ then follows by 
Part 3 
of Theorem \ref{th:main} for $n=0$ along with equation \eqref{normstep}, whereas the one for $F_2$ is
 a consequence of the inclusion 
$C^{0,\a}_{B}\subseteq C^{\a}_{Y}$ 
and of Remark \ref{rem:eq:mean_value_Yn}.
}

Finally, the case $\bar{n}=1$ is analogous, but $G_6$ contains two more terms:
\begin{align}
F_4 &=  \frac{\d^2}{2!} \sum_{i,j=1}^{p_0} \big( \partial_{x_i,x_j}u(z_2)-\partial_{x_i,x_j}u(z_1)\big)v_i v_j ,
\\
F_5 &=- \frac{\d^3}{3!} \sum_{i,j,l=1}^{p_0} \big( \partial_{x_i,x_j,x_l}u(z_2)-\partial_{x_i,x_j,x_l}u(z_1)\big)v_i v_j v_l ,
\end{align}
which can be estimated by {using that 
$\partial_{x_i,x_j,x_l}u\in C^{0,\a}_{B}\subseteq C^{\a}_{Y}$ and 
$ \partial_{x_i,x_j}u\in C^{1,\a}_{B}\subseteq C^{\a+1}_{Y}$ for any $1\leq i,j,l\leq p_0$
.}\end{proof}

\begin{proof}[Proof of Proposition \ref{prop:complementary_bis_alternative} for $n=\bar{n}$ and
$m=1$]
We assume {$u\in C^{2{\bar{n}}+1,\alpha}_{B}$} and we prove that, for any $0\leq k \leq \bar{n}$,
\begin{equation}\label{eq:ste14}
u(t,x+\xi)=T_{2\bar{n}+1} u\big( (t,x),(t,x+\xi) \big) + R_{\bar{n}}\big( t,x,\xi \big),
\end{equation}
with
\begin{equation}\label{eq:ste16}
|R_{\bar{n}}\big(t, x,\xi \big)| \le c_B \|u\|_{C^{2\bar{n}+1,\a}_{B}} |\x|_{B}^{2n+1+\a},\qquad (t,x)\in\R\times\Rd,\quad\xi\in \bigoplus_{j=0}^{k-1}V_{j}.
\end{equation}
We prove the thesis by induction on $k$. For $k=0$ there is nothing to prove since $R_{\bar{n}}\big(t, x,0 \big)\equiv 0$. Now, assume $0\leq k <\bar{n}$,  {$\xi \in \bigoplus_{j=0}^{k-1} V_{j}$ and $ v\in V_{k}$}. Then
\begin{equation}
  R_{\bar{n}}(t,x,\xi+v)=F_1 +F_2,
\end{equation}
with
\begin{align}
F_1=& \,u(t,x+\xi+v)-T_{2{\bar{n}}+1} u\big( (t,x+v),(t,x+\xi+v) \big)\\
F_2=&\; T_{2{\bar{n}}+1} u\big( (t,x+v),(t,x+\xi+v)\big)-T_{2{\bar{n}}+1} u\big( (t,x),(t,x+\xi+v) \big).
\end{align}
 We can apply the inductive hypothesis on $F_1$ and obtain the estimate
 \begin{align}
  |F_1|&\leq c_B\|u\|_{C^{2\bar{n}+1,\a}_{B}}|\xi|_B^{2{\bar{n}}+1+\a}\leq c_B\|u\|_{C^{2\bar{n}+1,\a}_{B}}|\xi+v|_B^{2{\bar{n}}+1+\a}
  .
 \end{align}
Recalling \eqref{eq:def_Tayolor_n}, $F_2$ can be written as
\begin{align}
 F_2 &={\sum_{0\leq |\b|_B\leq 2{\bar{n}}+1 \atop \b^{[i]}=0 \text{ if } i\geq k}} \frac{1}{\b !}  \p_{x}^{\b} u(t,x+v)\, \xi^{\b} - {\sum_{0\leq |\b|_B\leq 2{\bar{n}}+1 \atop \b^{[i]}=0 \text{ if } i\geq k} \sum_{0\leq |\gamma|_B\leq 2{\bar{n}}+1-|\beta|_B \atop \g=\gamma^{[k]}}} 
 \frac{1}{\b !\g!} \, \p_{x}^{\g}\p_{x}^{\b} u(t,x) \, \xi^{\b}v^{\g}  \\
        &={\sum_{0\leq |\b|_B\leq 2{\bar{n}}+1 \atop \b^{[i]}=0 \text{ if } i\geq k}} \frac{1}{\b !} \bigg( \p_{x}^{\b} u(t,x+v)-  { \sum_{0\leq |\gamma|_B\leq 2{\bar{n}}+1-|\beta|_B \atop \g=\gamma^{[k]}}} 
 \frac{1}{\g!} \, \p_{x}^{\g}\p_{x}^{\b} u(t,x) \, v^{\g} \bigg) \xi^{\b}  \\
            &= {\sum_{0\leq |\b|_B\leq 2{\bar{n}}+1 \atop \b^{[i]}=0 \text{ if } i\geq k}} \frac{1}{\b !} \Big( \p_{x}^{\b} u(t,x+v)- T_{2{\bar{n}}+1-|\b|_B}\p_x^{\b}u\big((t,x),(t,x+v)\big)\Big) \xi^{\b}.\label{eq:stefano20}
\end{align}
By Remark \ref{rem:step2}, we get 
$\partial_{x}^{\beta}u \in C^{2{\bar{n}}+1 -|\beta|_B,\alpha}_{B}$
. Now, if $|\b|_B\geq 1$, we can apply 
Part 3 of Theorem \ref{th:main} for $n=2{\bar{n}}+1-|\b|_B$ on $\p_{x}^{\b} u$ and get
\begin{align}\label{eq:michele3}
  \left|\p_{x}^{\b} u(t,x+v)- T_{2{\bar{n}}+1-|\b|_B}\p_x^{\b}u((t,x),(t,x+v))\right| \left|\xi^{\b}\right|&\leq 
c_B\|u\|_{C^{2\bar{n}+1,\a}_{B}}|v|_B^{2{\bar{n}}+1-|\b|_B+\a} |\xi|_B^{|\b|_B}\\
        &\leq
         c_B\|u\|_{C^{2\bar{n}+1,\a}_{B}}|\xi+v|_B^{2{\bar{n}}+1+\a}.
\end{align}
On the other hand, if $|\b|_B =0$ then we have to estimate
\begin{equation}
\label{eq:michele1}
 u(t,x+v)-{\sum_{0\leq |\gamma|_B\leq 2{\bar{n}}+1 \atop \g=\gamma^{[k]}}}  \frac{1}{\g!} \, \p_{x}^{\g} u(t,x) \, v^{\g}
.
\end{equation}
Recall that, by definition, we have 
{$|\g|_B =(2 k+1)|\g|$} if $\gamma=\gamma^{[k]}$. Now, set
\begin{equation}\label{eq:ste21}
 {j:=\max \{i\geq 0 \mid (2k+1)i\leq 2{\bar{n}}+1\}},
\end{equation}
and note that $j\geq 1$ because $k < \bar{n}$. By Remark \ref{rem:step2} and the mean-value
theorem, there exists a family of vectors $(\bar{v}_{\eta})_{\eta\in \mathcal{I}_{k}^j}$ where
\begin{equation}\label{eq:ste22}
 {\mathcal{I}_{k}^j=\{\eta\in\N_0^d \mid \eta=\eta^{[k]} \text{ and }|\eta|_{B}=(2k+1)j \},}
\end{equation}
such that $\bar{v}_{\eta}\in V_k$, $|\bar{v}_{\eta}|\leq |v|$ and
\begin{equation}
  u(t,x+v)-{\sum_{0\leq |\gamma|_B\leq (2k+1)(j-1) \atop \gamma=\gamma^{[i]}}}  \frac{v^{\g}}{\g!} \p_{x}^{\g} u(t,x)
  = \sum_{\eta \in \mathcal{I}_{k}^j} \frac{v^{\eta}}{\eta!}  \p_{x}^{\eta} u(t,x+\bar{v}_{\eta}). 
\end{equation}
Therefore, we obtain
\begin{align}
 &\Big| u(t,x+v)-{\sum_{0\leq |\gamma|_B\leq 2{\bar{n}}+1 \atop \g=\gamma^{[i]}}}  \frac{v^{\g}}{\g!} \p_{x}^{\g} u(t,x)\Big|
 =\Big|
  \sum_{\eta \in \mathcal{I}_{k}^j} \frac{v^{\eta}}{\eta!}\left( \p_{x}^{\eta} u(t,x+\bar{v}_{\eta}) -\p_{x}^{\eta} u(t,x) \right)\Big|=
 \intertext{(by \eqref{eq:ste21})}
 &\hspace{70pt}=
  \Big|\sum_{\eta\in \mathcal{I}_{k}^j} \frac{1}{\eta!}
  \Big( \p_{x}^{\eta} u(t,x+\bar{v}_{\eta}) -T_{2{\bar{n}}+1-(2k+1)j}\p_{x}^{\eta} u\big((t,x),(t,x+\bar{v}_{\eta})\big) \Big)
  v^{\eta}\Big|\le
\intertext{(by Remark \ref{rem:step2}, $\p_{x}^{\eta} u \in C^{2{\bar{n}}+1-(2k+1)j,\a}_{B}$ and
 thus by Part 3 of Theorem \ref{th:main} with $n=2{\bar{n}}+1-(2k+1)j$)}
 &\hspace{70pt} \leq c_B\|u\|_{C^{2\bar{n}+1,\a}_{B}} \sum_{\eta\in \mathcal{I}_{k}^j} \frac{1}{\eta!}\,
|\bar{v}_{\eta}|_B^{2{\bar{n}}+1-(2k+1)j+\alpha}\, |v|_B^{|\eta|_B}\le
 \intertext{(since $|\bar{v}_{\eta}|\leq |v|$ and by \eqref{eq:ste22})}
 &\hspace{70pt}\leq c_B\|u\|_{C^{2\bar{n}+1,\a}_{B}}|v|_B^{2{\bar{n}}+1+\a}\leq c_B\|u\|_{C^{2\bar{n}+1,\a}_{B}}|\xi+v|_B^{2{\bar{n}}+1+\a}
\end{align}
which concludes the proof.\end{proof}

\subsubsection{Proof of Proposition \ref{prop:complementary_new} for $\bf{n={\bar{n}}}$ }

To start 
we show that if {$u\in C^{2{\bar{n}}+1,\a}_{B}$} then for any $z=(t,x)$,
$\z=(t,\xi)\in\R\times\R^d$ we have
\begin{equation}\label{eq:Tay_disp_spaz}
 \left|u(t,x) - \bar{T}_{2{\bar{n}}+1} u\left((t,\xi),(t,x)\right)\right|
 \leq c_B\|u\|_{C^{2\bar{n}+1,\a}_{B}}\, |x-\xi|_B^{2{\bar{n}}+1+\a}
.
\end{equation}
Define the point $\bar{z}=(t,\bar{x})$ with
\begin{equation}
   \xb^{[i]}=
    \begin{cases}
    x^{[i]}, & \text{ if $i\geq {\bar{n}}$},\\
    \xi^{[i]}, & \text{ if $i< {\bar{n}}$}.
    \end{cases}
\end{equation}
It follows that
\begin{equation}\label{def:zbar}
   (x-\xb)^{\b}=
    \begin{cases}
    (x-\xi)^{\b} & \text{ if $|\b|_B\leq 2{\bar{n}}+1$, $\b^{[{\bar{n}}]}=0$},\\
    0, & \text{ if $|\b|_B\leq 2{\bar{n}}+1$, $\b^{[{\bar{n}}]}\neq 0$},
    \end{cases}
\end{equation}
and
\begin{equation}
 |x-\xb|_B\leq |x-\xi|_B, \qquad  |\bar{x}-\xi|_B \leq  |x-\xi|_B.
\end{equation}
Then we write
\begin{align}
 u(t,x) - \bar{T}_{2{\bar{n}}+1} u\left((t,\xi),(t,x)\right)= F_1 +F_2,
\end{align}
with
\begin{equation}
 F_1 = u(t,x) - \bar{T}_{2{\bar{n}}+1} u\left((t,\xb),(t,x)\right),\qquad 
 F_2= \bar{T}_{2{\bar{n}}+1} u\left((t,\xb),(t,x)\right)-\bar{T}_{2{\bar{n}}+1} u\left((t,\xi),(t,x)\right).
\end{equation}
Applying Proposition \ref{prop:complementary_bis_alternative} with $n=\bar{n}$ and $m=1$, 
we obtain 
\begin{equation}
|F_1|\leq c_B\|u\|_{C^{2\bar{n}+1,\a}_{B}} |x-\xb|_B^{2{\bar{n}}+1+\a}\leq c_B\|u\|_{C^{2\bar{n}+1,\a}_{B}} |x-\xi|_B^{2{\bar{n}}+1+\a}
.
\end{equation}
Now, by \eqref{def:zbar} we have
\begin{align}
 F_2=& \sum_{\genfrac{}{}{0pt}{}{|\b|_B\leq 2{\bar{n}}+1}{\b^{[{\bar{n}}]}=0}} \frac{1}{\b !}
\Big(\p_{x}^{\b} u(t,\xb) - \p_{x}^{\b} u(t,\xi)\Big) (x-\xi)^{\b}
                        -{\sum_{i=\bar{p}_{{\bar{n}}-1}+1}^{\bar{p}_{\bar{n}}}} Y^{({\bar{n}})}_{v_i^{({\bar{n}})}} u(t,\xi) {(x-\xi)_{i}}.
\end{align}
Moreover, by Remark \ref{rem:step2} we have {$\p_{x}^{\b} u\in C^{2\bar{n}+1-|\b|_B,\a}_{B}$} and
therefore, if $|\b|_B>0$, by {Part 3} of  Theorem \ref{th:main} for $n=2\bar{n}+1-|\b|_B$, we get
{$$
\Big|\Big(\p_{x}^{\b} u(t,\xb) - \p_{x}^{\b} u(t,\xi)\Big) (x-\xi)^{\b}\Big|\leq c_B\|u\|_{C^{2\bar{n}+1,\a}_{B}} |\xb-\xi|_B^{2{\bar{n}}+1+\a - |\b|_B }|x-\xi|^{|\b|}\leq c_B\|u\|_{C^{2\bar{n}+1,\a}_{B}} |x-\xi|_B^{2{\bar{n}}+1+\a }
.
$$}
In order to conclude the proof of \eqref{eq:Tay_disp_spaz}, we only have to prove
{\begin{equation}\label{eq:ste30}
 \Big| u(t,\xb) -u(t,\xi)- \sum_{j=\bar{p}_{{\bar{n}}-1}+1}^{\bar{p}_{\bar{n}}} Y^{({\bar{n}})}_{v_j^{({\bar{n}})}} u(t,\xi) (x-\xi)_{j}\Big|\leq c_B\|u\|_{C^{2\bar{n}+1,\a}_{B}} |x-\xi|_B^{2{\bar{n}}+1+\a }
.
\end{equation}
}
We set the points $\zeta_{i}=(t,\xi_{i})$, for $i=\bar{n}-1,\cdots,r$, as defined in Lemma
\ref{lem:connecting_curves} for $n=\bar{n}$ and $v=\bar{x}-\xi$. By \eqref{eq:ste25} we have
\begin{align}
 \bar{T}_{2{\bar{n}}+1} u(\z_{i-1},\z_i)= u(\z_{i-1}),\qquad i={\bar{n}},\dots,r,
\intertext{and}
 |\d_i|\leq c_B |\bar{x}-\xi|_B \leq c_B |x-\xi|_B,\qquad i={\bar{n}},\dots,r. \label{eq:stima_delta}
\end{align}
It is now clear that
\begin{equation}
u(t,\xb) -u(t,\xi)- \sum_{j=\bar{p}_{{\bar{n}}-1}+1}^{\bar{p}_{\bar{n}}} Y^{({\bar{n}})}_{v_j^{({\bar{n}})}} u(t,\xi) (x-\xi)_{j} =u(\z_r)-\bar{T}_{2{\bar{n}}+1} u(\z_{{\bar{n}}-1},\z_{\bar{n}})
= \sum_{i={\bar{n}}}^r \Big( u(\z_i)-\bar{T}_{2{\bar{n}}+1} u(\z_{i-1},\z_i) \Big),
\end{equation}
and formula \eqref{eq:ste30} follows from Proposition \ref{prop:complementary_alternative}  along with \eqref{eq:stima_delta}
.

We are now ready to prove \eqref{eq:drivative_commutators} for $n=\bar{n}$. For any $i\in
\{\bar{p}_{{\bar{n}}-1}+1,\dots, \bar{p}_{\bar{n}}\}$ and $\d\in \R$, set $x=\xi+\d e_{i}$ in
\eqref{eq:Tay_disp_spaz}, where $e_{i}$ is the $i$-th vector of the canonical basis of $\R^d$: we
obtain
\begin{equation}\label{eq:esistenza_der}
u(t,\xi+\d e_{i}) -u(t,\xi) - \d Y^{({\bar{n}})}_{v_i^{({\bar{n}})}}u(t,\xi)
=O\big(|\d|^{1+\frac{\a}{2{\bar{n}}+1}}\big),\quad \text{ as } \delta \to 0.
\end{equation}
This implies that $\p_{x_{i}}u(t,\xi)$ exists and
\begin{equation}\label{eq:esistenza_der_2}
\p_{x_{i}}u(t,\xi)=Y^{({\bar{n}})}_{v_i^{({\bar{n}})}}u(t,\xi) \quad t\in \R,\:\: \xi \in \R^d, \:\: i=\bar{p}_{{\bar{n}}-1}+1,\dots, \bar{p}_{\bar{n}}.
\end{equation}
Finally, by Remark \ref{rem:step2} we have $
Y^{({\bar{n}})}_{v_i^{({\bar{n}})}}u \in C^{0,\a}_{B}
$ and thus $
\p_{x_{i}}u \in C^{0,\a}_{B}
$.

{\begin{remark}\label{rem:michele} Incidentally we have just proved a special case of 
Part 3 
of Theorem \ref{th:main} for $n=2\bar{n}+1$, namely the case when there is no increment
in the time variable. Precisely we have shown that, for any function $u\in
C^{2\bar{n}+1}_{B}$, we have
\begin{equation}\label{eq:tay_solo_spazio_glob}
  \big| u(t,x)-T_{2\bar{n}+1} u\big((t,\xi),(t,x)\big) \big|\le c_B \|u\|_{C^{2\bar{n}+1,\a}_{B}}  {|x-\xi |_{B}^{2\bar{n}+1+\alpha}}, \qquad t\in\R,\quad x,\xi\in \R^d.
\end{equation}
\end{remark}
}

\subsubsection{Proof of Part 3 of Theorem \ref{th:main} for $n=2{\bar{n}}+1$}\label{subsec:prova_th_dispari}

{Relation \eqref{eq:ste31}} is a trivial consequence of Remark \ref{rem:ste1} {(see
\eqref{eq:ste32}-\eqref{eq:ste4})} along with Proposition \ref{prop:complementary_new} for
$n=\bar{n}$. We next prove estimate \eqref{eq:estim_tay_n}: by \eqref{eq:ste31}, for any $z=(t,x)$
and $\z=(s,\xi)$, the B-Taylor polynomial $T_{2{\bar{n}}+1}u(\z, z)$ is well defined. Define the
point $\z_1:=e^{(t-s)Y}(\z)=(t,e^{(t-s)B}\xi)$ and note that $\z_1$ and $z$ only differ {in} the
spatial variables. Moreover, we have
\begin{equation}
 \z_{1}^{-1}\circ z = \left(0,x-e^{(t-s)B}\xi\right),\qquad \z^{-1}\circ z =\left(t-s,x-e^{(t-s)B}\xi\right),
\end{equation}
and therefore
\begin{equation}\label{eq:norme_mic}
\norm{\z_{1}^{-1}\circ z}_B = \big|x-e^{(t-s)B}\xi\big|_B \leq \norm{\z^{-1}\circ z}_B.
\end{equation}
Now write
\begin{equation}
u(z) - T_{2{\bar{n}}+1} u(\z,z) = F_1 + F_2,
\end{equation}
with
\begin{equation}
F_1=u(z) - T_{2{\bar{n}}+1} u(\z_1,z), \qquad  F_2=T_{2{\bar{n}}+1} u(\z_1,z) -T_{2{\bar{n}}+1} u(\z,z).
\end{equation}
By \eqref{eq:tay_solo_spazio_glob} 
in Remark \ref{rem:michele} along with \eqref{eq:norme_mic}, we obtain the estimate
 $$
  |F_1|\leq c_B \|u\|_{C^{2\bar{n}+1,\a}_{B}} \|\z^{-1}\circ z\|_B^{2{\bar{n}}+1+\a}
.$$
A convenient rearrangement of the terms in the Taylor polynomials  allows us to estimate $F_2$.
Precisely, we have
\begin{align}
  F_2=&  \sum_{|\b|_B\leq 2{\bar{n}}+1}\frac{1}{\b !} \big(\partial_{\xi}^{\beta} u(e^{(t-s)Y}(\z))\big) (x-e^{(t-s)B}\xi)^{\b} 
      - \sum_{2k + |\b|_B\leq 2{\bar{n}}+1}\frac{Y^k \partial_{\xi}^{\beta}u(\z)}{\b !k!}   (x-e^{(t-s)B}\xi)^{\b}(t-s)^k\\
    =& \sum_{|\b|_B\leq 2{\bar{n}}+1}\frac{1}{\b !} \left(\partial_{\xi}^{\beta} u(e^{(t-s)Y}(\z))
     - \sum_{2k\leq 2{\bar{n}}+1-|\b|_B}
     \frac{(t-s)^k}{k!} Y^k\p_{\xi}^{\b} u(\z)\right)(x-e^{(t-s)B}\xi)^{\b}.
\end{align}
{Now, by \eqref{eq:ste31} we have $\p_{x}^{\b} u \in C^{2{\bar{n}}+1-|\b|_B,\a}_{B}$ and thus, by
Remark \ref{rem:eq:mean_value_Yn} we obtain
\begin{equation}
|F_2| \leq 
\|u\|_{C^{2\bar{n}+1,\a}_{B}} \sum_{|\b|_B\leq 2{\bar{n}}+1}\frac{1}{\b !} |t-s|^{\frac{2{\bar{n}}+1-|\b|_B+\a}{2}}\, \big|x-e^{(t-s)B}\xi\big|_B^{|\b|_B}\leq c_B \|u\|_{C^{2\bar{n}+1,\a}_{B}} \|\z^{-1}\circ z\|_B^{2{\bar{n}}+1+\a},
\end{equation}
}
and this concludes the proof.

\subsection{Step 3}
Fix $\bar{n}\in \{0,\cdots,r-1 \}$. Assume to be holding true:
\begin{enumerate}
\item[-] Proposition \ref{prop:complementary_new} 
for any $0\leq n\leq \bar{n}$;
\item[-] Theorem \ref{th:main} 
for any $0\leq n\leq 2\bar{n}+1$;
\end{enumerate}
we have to prove:
\begin{enumerate}
\item[-] Propositions \ref{prop:complementary_alternative} and \ref{prop:complementary_bis_alternative} for $n=\bar{n}+1$, $m=0$;
\item[-] Part 3 of Theorem \ref{th:main} for $n=2\bar{n}+2$.
\end{enumerate}

In this case, the proof is relatively simpler if compared to the one of Step 2. This is because we
do not need to prove the existence of the Euclidean derivatives of the higher level. Hence the
proofs are simpler versions of those in Step 2. We skip the details for the sake of brevity.

\subsection{Step 4}
Here we fix a certain $\bar{n}\geq 2r+1$, suppose Theorem \ref{th:main} true for any $0\le n\le
\bar{n}$ and prove {Part 3 of Theorem \ref{th:main}} for $n=\bar{n}+1$. To prove the claim, we
will first consider the case with no increment w.r.t. the time variable, as we have done in Step
2. In that case, we used the curves $\gamma^{n,k}_{v,\delta}(z)$ in order to increment those
variables w.r.t. which we had no regularity in the Euclidean sense: then we applied Proposition
\ref{prop:complementary_alternative} to estimate  the increment along such curves.
This time, this will not be necessary because, since $\bar{n}+1> 2r+1$, the existence of the Euclidean derivatives is ensured along any direction by the inductive hypothesis. 
\begin{proof}[Proof of Part 3 of Theorem \ref{th:main} for $n=\bar{n}+1$]
Recall that, by hypothesis, {$u\in C^{\bar{n}+1,\a}_{B}$} with $\bar{n}\geq 2r+1$. It is easy to
prove that,
for any $z=(t,x)$, $\z=(s,\xi) \in \R^d$,
we 
have
\begin{equation}\label{eq:bound_glob_space}
\left|u(t,x)-T_{\bar{n}+1}u((t,\xi),(t,x))\right|\leq c_B \norm{u}_{C^{\bar{n}+1,\a}_{B}}|x-\xi|_{B}^{\bar{n}+1+\a}.
\end{equation}
The proof of the latter identity is identical to that of Proposition
\ref{prop:complementary_bis_alternative}. Precisely, under the assumption $\bar{n}\geq 2r+1 $, the
technical restriction made on the spatial increments in Proposition
\ref{prop:complementary_bis_alternative} can be dropped and the proof proceeds exactly in the same
way, {by making sure that the constant $c_B$ in \eqref{eq:bound_glob_space} is actually
independent of $\bar{n}$}.

The proof of {Part 3 of} Theorem \ref{th:main} then follows exactly as in Step 2, by using the estimate 
\eqref{eq:bound_glob_space} instead of 
\eqref{eq:tay_solo_spazio_glob}.
\end{proof}
\subsection{Proof of the local version of Theorem \ref{th:main} (Part 1 and Part 2)}

The proof of the local version of Theorem \ref{th:main} is based upon the same arguments used to prove its global counterpart. The main additional difficulty arising when proving Part 1 and Part 2, is to make sure that all the integral curves used in the proof to connect $z$ to $\z$ do not exit the domain $\Omega$. There comes the necessity to take $z$ in a small ball centered at $\z$ with radius $r$. In particular, we have to check that all the connecting curves are contained in a bigger ball with radius $R>r$, compactly contained in $\Omega$, in order to bound the remainder in \eqref{eq:estim_tay_n_loc} by means of the $C^{n,\alpha}_{B,\text{loc}}$ norm of $u$ on such ball.

To synthesize, 
the proof could be summarized as follows. First prove the Taylor estimate \eqref{eq:estim_tay_n_loc} for two points $z,\xi\in \Rdd$ with the same time-component (estimate \eqref{eq:tay_solo_spazio_glob}); this also proves the existence of those Euclidian derivatives whose existence is not directly implied by definition of $C^{n,\alpha}_{B,\text{loc}}$
and thus proves Part 1 of the theorem. To do this, one can proceed as in Section
\ref{prop:complementary_new}: precisely, one would first apply the local version of Proposition
\ref{prop:complementary_bis_alternative}, whose proof is exactly analogous to its global
counterpart, to control the increment of $u$ between $\xi$ and $\bar{x}$. Secondly, one would
define the points $\zeta_{k}=(t,\xi_{k})$ for $k=\bar{n}-1,\cdots,r$, by means of the {curves} of
Lemma \ref{prop:complementary_new} and thank to  \eqref{eq:ste25}-\eqref{eq:ste34}, obtain the
bound
\begin{equation}
|\d_k|+\norm{\z_{k}^{-1}\circ \z}_B \leq c_B |x-\xi|_B \leq c_B r, \qquad k={\bar{n}},\dots,r.
\end{equation}
This would ensure that {each point $\z_{k}$} is inside the domain $\Omega$, for any $r$ suitably
small, and would allow to  control the increment of $u$ between $\z_{k-1}$ and $\z_{k}$ by means
of the local version of Proposition \ref{prop:complementary_alternative}. The latter preliminary
result can be proved analogously to the global case, by making use of the bounds \eqref{normstep}
and \eqref{eq:ste12} to control the distance of each curve $\gamma^{n,k}_{v,\delta}(z)$ from $\z$.

Eventually, the proof of Part 2 for two general points $z,\z\in\Rdd$ follows by moving along the
integral curve of $Y$ to control the increment in the time-variable and by using the bound
\eqref{eq:ste12} to check that $e^{(t-s)Y}(\z)\in\Omega$.

%
%

\bibliographystyle{chicago}
\bibliography{Bibtex-Master-3.00}

\end{document}